\documentclass[11pt,a4paper]{article}
\usepackage[arxiv]{mymacros}
\usepackage[normalem]{ulem}

\author{Roland Bauerschmidt\thanks{University of Cambridge, Statistical Laboratory, DPMMS. Email: {\tt rb812@cam.ac.uk}.}
  \and Tyler Helmuth\thanks{University of Bristol, School of Mathematics. Email: {\tt th17948@bristol.ac.uk}.}
  \and Andrew Swan\thanks{University of Cambridge, Statistical Laboratory, DPMMS. Email: {\tt acks2@cam.ac.uk}.}
  }
\title{The geometry of random walk isomorphism theorems}
\date{July 2, 2020}

\let\originalleft\left
\let\originalright\right
\renewcommand{\left}{\mathopen{}\mathclose\bgroup\originalleft}
\renewcommand{\right}{\aftergroup\egroup\originalright}
\usepackage{tocloft}
\setlength\cftaftertoctitleskip{2pt}
\setlength\cftparskip{-6pt}

\begin{document}

\maketitle

\begin{abstract}
  The classical random walk isomorphism theorems relate the
  local times of a continuous-time random walk to the square of a
  Gaussian free field. A Gaussian free field is a spin system that
  takes values in Euclidean space, and this article generalises the
  classical isomorphism theorems to spin systems taking values in
  hyperbolic and spherical geometries. The corresponding random walks
  are no longer Markovian: they are the vertex-reinforced and
  vertex-diminished jump processes. We also investigate supersymmetric
  versions of these formulas.

  Our proofs are based on exploiting the continuous symmetries of the
  corresponding spin systems. The classical isomorphism theorems use
  the translation symmetry of Euclidean space, while in hyperbolic and
  spherical geometries the relevant symmetries are Lorentz boosts and
  rotations, respectively. These very short proofs are new even in the
  Euclidean case.

  Isomorphism theorems are useful tools, and to illustrate this we
  present several applications. These include simple proofs of
  exponential decay for spin system correlations, exact formulas for
  the resolvents of the joint processes of random walks together
  with their local times,
  and a new derivation of the Sabot--Tarr\`{e}s formula for
  the limiting local time of the vertex-reinforced jump process.
\end{abstract}

\setcounter{tocdepth}{1}
\tableofcontents

\section{Introduction}

Random walk isomorphism theorems refer to a class of
distributional identities that relate the local times of Markov
processes to the squares of Gaussian fields. These theorems, which
connect two different types of probabilistic objects, have their
origins in the work of the physicist K.\ Symanzik~\cite{Syma69}.
Isomorphism theorems have been useful in the
investigation of a variety of phenomena, and they can be used in
two directions: to study field theoretic questions in terms
of random walks, and to study random walks in terms of field
theory.  An incomplete list of topics
investigated via isomorphism theorems includes: local times of
Markov processes~\cite{MR2250510} and their large
deviations~\cite{MR1100240,MR2330973}; cover times and thick points of the
simple random walk~\cite{MR2912708,1809.04369,1903.04045};
four-dimensional self-avoiding walk~\cite{MR1143413,MR3339164};
$\phi^4$ field theory~\cite{MR719815,BrydgesHelmuthHolmes, MR643591};
and random walk loop soups~\cite{MR2932978, MR941982}.

The purpose of this article is to expand the scope of
isomorphism theorems beyond Gaussian fields. Namely, we describe,
and make use of, isomorphism theorems that relate \emph{non-Markovian}
stochastic processes to \emph{non-Gaussian} spin systems. Our
proofs also provide a new perspective on isomorphism theorems: they are
consequences of the symmetries of the underlying spin systems.

In Section~\ref{sec:isom-theor-hyperb} below we give an introduction
to isomorphism theorems and the processes this article
is concerned with. Before doing this, we briefly summarise the new results
contained in this article:
\begin{itemize}
\item New and efficient proofs of the
  Brydges--Fr\"ohlich--Spencer--Dynkin (BFS--Dynkin), Eisenbaum, and
  second generalised Ray--Knight isomorphism theorems for the simple
  random walk (SRW).  These results are all derived in a few pages
  from a more general Ward identity for the Gaussian free field.
\item New and efficient proofs of supersymmetric versions of the
  isomorphism theorems for the SRW. In particular, we prove a previously
  unknown supersymmetric version of the generalised second Ray--Knight
  isomorphism.  For the reader's convenience we also present an
  introduction to supersymmetry directed towards probabilists in an
  appendix.
\item New isomorphism theorems connecting the vertex-reinforced jump
  process (VRJP) with hyperbolic sigma models, and supersymmetric
  versions of these theorems.  The analogue of the BFS--Dynkin
  isomorphism previously appeared in \cite{1802.02077}, and here we
  also establish analogues of the Eisenbaum and Ray--Knight
  isomorphism theorems.  Our proofs are geometric and do not
  rely on any particular set of coordinates. In particular, we do
  not use horospherical coordinates.
\item New isomorphism theorems for the vertex-diminished jump process
  (VDJP).  The VDJP is connected to a spin model taking values in the
  hemisphere.  It previously appeared in the context of the
  Ray--Knight isomorphism theorem for SRW in~\cite{MR3520013}.
\end{itemize}

We also give several applications of these isomorphism theorems.  In
Section~\ref{sec:ST} we show that the Sabot--Tarr\`{e}s limit formula for
the local time of the VRJP~\cite{MR3420510} is a direct consequence of our
supersymmetric Ray--Knight theorem for the $\HH^{2|2}$ model. In Section~\ref{sec:TC-HK} we show
how isomorphism theorems yield fixed-time formulas and representations
of the resolvents for the joint processes of the random walks
together with their local times.
Lastly, we prove some results
concerning exponential decay of correlation functions for the
associated spin models in Section~\ref{sec:applications}.

\subsection{Isomorphism theorems for hyperbolic and spherical
  geometries}
\label{sec:isom-theor-hyperb}

Let $X_{t}$ be a continuous-time stochastic process on a finite state
space $\Lambda$ with associated local times
$\V{L}_{t} = (L_{t}^{i})_{i\in\Lambda}$.  The processes considered in
this paper are all of the form
\begin{equation}
  \label{eq:RW-models}
  \P\cb{X_{t+dt}=j \mid (X_{s})_{s\leq t}, X_{t}=i} =
  \beta_{ij}(1+\epsilon L^{j}_{t})\,dt, \qquad \epsilon\in \{-1,0,1\},
\end{equation}
where $\beta_{ij}\geq 0$ and $\beta_{ij}=\beta_{ji}$ for all $i,j\in\Lambda$.

The random walk models defined by \eqref{eq:RW-models} are
defined more precisely below. The models have all appeared
previously, though they have received varying amounts of
attention. When $\epsilon=0$ the model is the \emph{continuous-time simple
  random walk}; for $\epsilon=1$ it is the \emph{vertex-reinforced
  jump process} (VRJP) first studied in~\cite{MR1900324,MR2027294};
for $\epsilon=-1$ it is the \emph{vertex-diminished jump process}
(VDJP) which appeared in~\cite{MR3520013}. As the names suggest, the
VRJP is a random walk that is encouraged to revisit vertices it has
visited in the past, while the VDJP is discouraged from doing so.

Let $\R^{n}$ denote $n$-dimensional Euclidean space, $\HH^{n}$ denote
$n$-dimensional hyperbolic space, and let $\bbS^{n}_{+}$ denote the
upper hemisphere of the $n$-dimensional sphere. Below we will
introduce spin systems that take values in these spaces, and then
link these to the aforementioned random walks. The
spin systems are the $\R^{n}$-valued \emph{Gaussian free field} (GFF),
corresponding to the SRW; the $\HH^{n}$-valued \emph{hyperbolic spin
  model}, corresponding to the VRJP; and the $\bbS^{n}_{+}$-valued
\emph{hemispherical spin model}, corresponding to the VDJP.

To give a flavour of the relationships that we will establish, recall
Dynkin's formulation of an isomorphism linking the SRW and the
$\R$-valued GFF~\cite{MR734803}. Let $G=(\Lambda,E)$ be a finite
graph, $h>0$, and let $\avg{\cdot}$ denote the expectation of a GFF
$(u_{i})_{i\in\Lambda}$ with covariance $(-\Delta + h)^{-1}$. This is
often called the \emph{massive GFF} with \emph{mass} $m=\sqrt{h}$.
Let $\E_{i}$ denote the expectation of a continuous-time SRW $X_{t}$
with associated local time field $\V{L}_t = (L_t^i)_{i\in\Lambda}$,
started from $i\in\Lambda$, with $X_{t}$ independent of the GFF. Then
for all bounded $g\colon \R^{\Lambda}\to \R$,
\begin{equation}
  \label{eq:intro-dynkin}
  \spin{u_{i}u_{j} g(\frac{1}{2}\V{u}^{2})} = \spin{
    \int_{0}^{\infty} \E_{i}(g(\frac{1}{2} \V{u}^{2} +    \V{L}_{t}) 1_{X_{t}=j}) \, e^{-ht}\, dt},
  \qquad \V{u}^2 \bydef (u_i^2)_{i\in\Lambda}.
\end{equation}
The left-hand side is a generalization of the spin-spin correlation
between the spins $u_{i}$ and $u_{j}$ of the GFF. In particular,
taking $g=1$ in \eqref{eq:intro-dynkin} reveals the well-known fact
that the second moments of the massive GFF are given by 
the Green's function of a SRW killed at rate $h$.

In Theorems~\ref{thm:BHS} and \ref{thm:BHS-hemi} we establish
analogues of~\eqref{eq:intro-dynkin} for the hyperbolic and
hemispherical spin models; the hyperbolic case first appeared
in~\cite{1802.02077}. Our methods also allow us to establish other
isomorphism theorems. In particular, we give new proofs of the
Eisenbaum isomorphism theorem~\cite{MR1459468} and of the generalised
second Ray--Knight theorem~\cite{MR1813843} for the GFF, and we
establish analogues of these results for hyperbolic and hemispherical
spin models. Our proofs apply to $n$-component spin systems for
general $n \in \N = \{1,2,\dots\}$ in all cases, and even for
the GFF some of these results are new when $n> 1$. To ease our
exposition we will refer to the generalised second Ray--Knight theorem
as the Ray--Knight isomorphism in what follows.

\subsection{Supersymmetric isomorphism theorems}
\label{sec:supersymm-isom-theor}

There is another type of isomorphism that relates the simple random walk
to a spin system, in which the GFF is replaced by the
\emph{supersymmetric Gaussian free field (SUSY GFF)}. These
isomorphisms originated in work of McKane~\cite{MR594576} and Parisi
and Sourlas~\cite{PhysRevLett.43.744}. Supersymmetry has played a role
in several interesting probabilistic
problems~\cite{MR2031859,MR2728731,MR2525670,MR3622573}, and several of the
applications we mentioned in the opening paragraph of this article
involve the SUSY
GFF~\cite{MR1143413,MR3339164,MR1100240,MR2330973,MR941982}.

The most important aspect of the SUSY isomorphism for the SRW is
immediately apparent from the statement of the result, and hence we
defer a careful definition of the SUSY GFF to Section~\ref{sec:SUSY}.
Let $\avg{\cdot}$ now denote the expectation with respect to
the SUSY GFF. The SUSY isomorphism theorem is that for all smooth and
bounded $g\colon \R^{\Lambda}\to \R$,
\begin{equation}
  \label{eq:intro-susy}
  \avga{u_{i}^{1}u_{j}^{1} g(\frac{1}{2} \V{|u|}^{2})} = 
  \int_{0}^{\infty} \E_{i} (g(\V{L}_{t}) 1_{X_{t}=j}) \, e^{-ht}\, dt,
  \qquad \V{|u|}^2 \bydef (|u_i|^2)_{i\in\Lambda}.
\end{equation}
The key point of~\eqref{eq:intro-susy} is that the right-hand side
only involves the simple random walk, while the left-hand side involves
only the components $(u_{i})_{i\in\Lambda}$ of the SUSY GFF. Thus
questions about the local time of random walk can be rephrased purely
in terms of the SUSY GFF.

The viewpoint that isomorphism theorems arise as a consequence of
continuous symmetries applies equally well to supersymmetric spin
systems. Beyond proving \eqref{eq:intro-susy}, Section~\ref{sec:SUSY}
also establishes results analogous to \eqref{eq:intro-susy} for the
supersymmetric $\HH^{2|2}$ and $\bbS^{2|2}_{+}$ models, and
moreover we prove a SUSY variant of the
Ray--Knight isomorphism. This is new even for the simple random walk. We
emphasise that these theorems give direct access to the local times
of the non-Markovian VRJP and VDJP in terms of the spin models. The
analogue of \eqref{eq:intro-susy} for $\HH^{2|2}$ first appeared
in~\cite{1802.02077}.

\subsection{Proof ideas}
\label{sec:proof-ideas}

Our proofs of isomorphism theorems all follow a common strategy. The
spin systems we consider possess continuous symmetries, and as a
result satisfy integration by parts formulas that are called
\emph{Ward identities} in the physics literature. Isomorphism theorems
are a direct  consequence of these Ward identities. 

A key step is to consider a random walk $X_{t}$ to be a marginal of
the joint process $(X_{t},\V{L}_{t})$ of the walk and its local times
together. Our Ward identities can be rephrased in terms of the
infinitesimal generator of this joint process, and \emph{all} of our
isomorphism theorems follow quite quickly by choosing appropriate
specializations of the Ward identities. In particular, this gives a
unified set of proofs of the BFS--Dynkin, Eisenbaum,
and Ray--Knight isomorphism theorems for the SRW.

\subsection{Structure of this article}
\label{sec:struct-this-article}

Section~\ref{sec:euclidean} gives our new proofs of the classical
isomorphism theorems that link random walks
to Gaussian fields. We present our arguments in detail in this
familiar context as very similar ideas are used in
Sections~\ref{sec:hyperbolic} and~\ref{sec:spherical}, which derive
isomorphism theorems for the VRJP and VDJP. We derive supersymmetric
isomorphisms for the SRW, the VRJP, and the VDJP in
Section~\ref{sec:SUSY}, and
Sections~\ref{sec:ST} through \ref{sec:applications} concern applications
of our new isomorphisms.

To keep this article self-contained, Appendix~\ref{sec:SUSY-intro}
contains an introduction to the parts of supersymmetry needed to
understand our supersymmetric isomorphisms and their
applications. In Appendix~\ref{sec:fSUSY} we discuss some
further aspects of symmetries and supersymmetries that are not
needed for our results, but that help place the results of this
article in context.

\subsection{Related literature and future directions}
\label{sec:related-literature}

\paragraph{Related literature}
For monograph-length treatments of isomorphism theorems and related
topics, e.g., loop soups, see~\cite{MR2932978,MR2250510}.
Many proofs of various isomorphism theorems have been given; here we
mention only the recent~\cite{MR3520013,1607.05201}. The major
innovation in the present work is that we do not rely on Gaussian
calculations. This is important both for obtaining results for
$\HH^{n}$ and $\bbS^{n}_{+}$, and for obtaining supersymmetric
variants.

\paragraph{Future directions}

This article describes isomorphism theorems that link spin systems on
$\R^{n}$, $\HH^{n}$, and $\bbS^{n}_{+}$ (and the supersymmetric
versions when $n=2$) to random walks. This provides a partial answer
to a question of Kozma~\cite{MR3469136}, who asked if there are other
spin models (beyond the $\HH^{2|2}$ model) with associated random
walks. The development of a more systematic connection between spin
models and random walks would be very interesting. In particular, it
is natural to wonder if there are geometric spaces beyond $\R^{n}$,
$\HH^{n}$, and $\bbS^{n}_{+}$ that have associated isomorphism
theorems.

Another interesting future direction would be to clarify the relation
between our new isomorphism theorems and loop soups. In the setting of the
SRW this connection is well-developed~\cite{MR2932978,MR2250510} ---
do these connections extend to the VRJP and VDJP? Similar
questions can be asked about random interlacements; for recent
  progress in this direction see~\cite{1903.07910}.

\subsection{Notation and conventions}
\label{sec:NC}

$\Lambda$ will be a finite set and
$\beta = (\beta_{ij})_{i,j\in\Lambda}$ will be a set of edge weights,
i.e., $\beta_{ij}=\beta_{ji}\geq 0$.  The edge weights induce a graph
with vertices $\Lambda$ and edge set $\{\{i,j\} \mid \beta_{ij}>0\}$,
and we will assume that this graph is connected. We also let
$\V{h} = (h_{i})_{i\in\Lambda}$ denote a set of non-negative vertex
weights; here we are setting a convention that bold symbols denote
objects indexed by $\Lambda$. Both $\beta$ and $\V{h}$ will play the
role of parameters in our models.  For typographical reasons we will
sometimes write $h$ in place of $\V{h}$ when there is no risk of confusion.

Suppose $V$ is a set equipped with a binary operation $(x,y)\mapsto x\cdot y$.
We write $V^{\Lambda}$ for the set of
maps from $\Lambda$ to $V$, denote elements of this set by
$\V{u}=(u_{i})_{i\in\Lambda}$, and let $|\V{u}|^{2} = (u_{i}\cdot u_{i})_{i\in\Lambda}$.
If elements of $V$ are vectors, e.g.,
$u_{i}=(u_{i}^{1},\dots, u_{i}^{n})\in\R^{n}$, then we write
$\V{u}^{\alpha} = (u_{i}^{\alpha})_{i\in\Lambda}$ for the collection
of $\alpha^{\text{th}}$ components.

For a function $f\colon \R \to \R$ we often impose that $f$ is smooth
and has \emph{rapid decay}.  A sufficient condition is that $f$ and
its derivatives decay faster than any polynomial: for every $p$ and
$k$, there are constants $C_{p,k}$ such that the $k$th derivative
satisfies $|f^{(k)}(u)|\leq C_{p,k}|u|^{-p}$. If
$f\colon \R^{n}\to \R$,
$(u_{1},\dots, u_{n})\mapsto f(u_{1},\dots, u_{n})$, then we say $f$
has \emph{rapid decay in $u_{1}$} if $f(\cdot,u_{2},\dots, u_{n})$ has
rapid decay with constants uniform in $u_{2},\dots, u_{n}$. Rapid
decay in $u_{j}$ is defined analogously, and we say such an $f$ has
\emph{rapid decay} if it has rapid decay in some coordinate.  For a
non-smooth function $f$, we say that $f$ has rapid decay if the the
above holds with $k=0$.

Similarly, we often impose that $f\colon \R^{n}\to\R^{m}$ has
\emph{moderate growth}. A sufficient condition is that $f$ has at most
polynomial growth, i.e., there exists $q$ and $C_{k}$ such that
$|\nabla^kf(u)|\leq C_{k}|u|^q$ for all $k$.

Given a function $f\colon\Lambda\times\R^{\Lambda}\to \R$,
$(i,\V{\ell})\mapsto f(i,\V{\ell})$ we say $f$ is \emph{smooth},
\emph{rapidly decaying}, etc.\ if it has this property with respect to its
second coordinate $\V{\ell}$. Throughout we will assume
functions are Borel measurable without making this explicit.

\section{Isomorphism theorems for flat geometry}
\label{sec:euclidean}

In this section we introduce the simple random walk, the corresponding
Gaussian free field, and several well-known isomorphism theorems
relating these objects. The method of proof will be used repeatedly in
the remainder of the paper when we consider other spin systems.  An
important aspect of the proofs is that they do not rely on explicit
Gaussian computations; this is essential for the generalization of
these theorems to non-Gaussian spin systems. Our proofs also show that
these results are true for GFFs with any number of components.

\subsection{Simple random walk and Gaussian free field}
\label{sec:basic-definitions}

\paragraph{Simple random walk}
\label{sec:SRW}

The continuous-time \emph{simple random walk} (SRW) on $\Lambda$
with symmetric edge weights $\beta \bydef (\beta_{ij})_{i,j\in\Lambda}$,
i.e., $\beta_{ij}=\beta_{ji} \geq 0$, is the
Markov jump process $(X_{t})_{t\geq 0}$ with transition rates
\begin{equation}
  \label{eq:SRW-J}
  \P\cb{X_{t+dt}=i\mid X_{t}=j} = \beta_{ij}\,dt.
\end{equation}
We write $\P_{i}$ and $\E_{i}$ for the law and expectation of $X_t$ when
it is started from the vertex $i$.  Formally, $X_{t}$ is a
continuous-time Markov process with generator $\Delta_{\beta}$, where
the Laplacian $\Delta_{\beta}$ is the matrix indexed by
$\Lambda$ that acts on $f \colon \Lambda \to \R$ by
\begin{equation}
  \label{eq:Lact}
  (\Delta_{\beta}f)(i)
  \bydef
  \sum_{j\in\Lambda}\beta_{ij}(f(j)-f(i)).
\end{equation}

In what follows it will be useful to view $X_t$ as a marginal of the
Markov process $(X_{t},\V{L}_{t})_{t\geq 0}$ consisting of $X_t$ and its
\emph{local times} $\V{L}_{t} \bydef (L^{i}_{t})_{i\in\Lambda}$, which
are defined by 
\begin{equation}
  \label{eq:local}
  L_{t}^{i} \bydef L^{i}_{0}+\int_{0}^{t} 1_{X_{s}=i}\,ds, \qquad i\in\Lambda,
\end{equation}
where the vector $\V{L}_{0}$ is a collection of free parameters called
the \emph{initial local time}. A short computation shows that the
generator of $(X_t,\V{L}_t)$ acts on smooth functions
$f \colon \Lambda\times\R^{\Lambda}\to \R$ by
\begin{equation}
  \label{eq:SRW-gen}
  (\cL f)(i,\V{\ell}) = (\Delta_{\beta}f)(i,\ell) + \ddp{f(i,\V{\ell})}{\ell_{i}}
  ,\quad \text{i.e.,} \quad
  \cL \V{f} = \Delta_\beta \V{f} + \partial \V{f},
\end{equation}
where $\Delta_\beta$ only acts on the first argument and the last
equation uses the vector notation
\begin{equation} \label{e:vector-ell}
  \V{f} \bydef
  (f(i,\V{\ell}))_{i\in \Lambda},
  \quad
  \partial\V{f} \bydef
  (\ddp{f(i,\V{\ell})}{\ell_i})_{i\in \Lambda}.
\end{equation}
We write $\P_{i,\V{\ell}}$ for the law of $(X,\V{L})$ started at
$(i,\V{\ell})\in\Lambda\times\R^{\Lambda}$, and $\E_{i,\V{\ell}}$ for
its expectation. Note that
$\E_{i,\V{\ell}}f(X_t,\V{L}_t) = \E_{i,\V{0}}f(X_t,\V{\ell}+\V{L}_t)$,
and in particular that $f_t(i,\V{\ell}) \bydef \E_{i,\V{\ell}}f(X_t,\V{L}_t)$
is a smooth function with rapid decay
in $\V{\ell}$ if $f$ is smooth with rapid decay.

\paragraph{Gaussian free field}
\label{sec:GFF}

The ($n$-component) \emph{Gaussian free field} (\emph{GFF} or
\emph{$\R^{n}$ model}) is a spin system
taking values in $\R^{n}$. Its configurations are elements
$\V{u}\in (\R^{n})^{\Lambda}$; by an abuse of notation we will write
$\R^{n\Lambda}$ in place of $(\R^{n})^{\Lambda}$.
Let $\V{h} = (h_{i})_{i\in\Lambda}$, and assume $h_{i}\geq 0$.
To define the probability of a configuration, let
\begin{equation}
  \label{eq:GFF-H}
  H_{\beta}(\V{u})
  \bydef
  \frac{1}{2}(\V{u},-\Delta_\beta \V{u}),
  \qquad H_{\beta,h}(\V{u})\bydef
   H_{\beta}(\V{u}) + {\frac{1}{2}(\V{h},|\V{u}|^2)},
\end{equation}
where $(\V{f},\V{g}) \bydef \sum_{i\in\Lambda}f_{i}g_{i}$, 
$|\V{u}|^{2}\bydef (u_{i}\cdot u_{i})_{i\in\Lambda}$, and
$\cdot$ is the Euclidean inner product. In
\eqref{eq:GFF-H} the Laplacian acts diagonally on the $n$ components
of $\V{u}$, i.e.,
$\Delta_{\beta}\V{u} = (\Delta_{\beta}\V{u}^{\alpha})_{\alpha =1}^{n}$,
and hence \eqref{eq:GFF-H} can be rewritten using
\begin{equation}
  (\V{u},-\Delta_\beta \V{u})
  = 
  \frac{1}{2}\sum_{i,j\in \Lambda}\beta_{ij}(u_{i}-u_{j})^{2}
,\qquad
   (\V{h},|\V{u}|^2) =
   \sum_{i\in \Lambda}h_{i}u_{i}\cdot u_{i},
\end{equation}
where $(u_{i}-u_{j})^{2}$ is shorthand for $(u_{i}-u_{j})\cdot (u_{i}-u_{j})$.
Note that another common notation is $h_{i}=m_{i}^{2}\geq 0$,
and $m_{i}$ is called the \emph{mass} at the vertex $i$.  Define the
unnormalised expectation $\cb{\cdot}_{\beta,h}$ on functions
$F\colon \R^{n\Lambda}\to \R$ by
\begin{equation}
  \label{eq:GFF-un}
  \cb{F}_{\beta,h} \bydef \int_{\R^{n\Lambda}}F(\V{u})e^{-H_{\beta,h}(\V{u})}d\V{u},
\end{equation}
where the integral is with respect to Lebesgue measure $d\V{u}$ on $\R^{n\Lambda}$. 
We set $\cb{\cdot}_{\beta} \bydef\cb{\cdot}_{\beta,0}$.  

The \emph{Gaussian free field}
is the probability measure on $\R^{n\Lambda}$ defined by the
normalised expectation
\begin{equation}
  \label{eq:GFF-n}
  \avg{F}_{\beta,h} \bydef \frac{1}{Z_{\beta,h}}\cb{F}_{\beta,h}
 =\frac{\q{F e^{-{\frac{1}{2}(\V{h},|\V{u}|^2)}}}_{\beta}} 
  {\q{ e^{-{\frac{1}{2}(\V{h},|\V{u}|^2)}}}_{\beta}}
  ,
  \qquad Z_{\beta,h} \bydef \cb{1}_{\beta,h}.
\end{equation}
Note that for the expectation in \eqref{eq:GFF-n} to be
well-defined we must have $Z_{\beta,h} <\infty$; this is the case
if and only if $h_{i}>0$ for some $i$. The divergence if $\V{h}=\V{0}$ is due
to the invariance of $H_{\beta}(\V{u})$ under the simultaneous
translation $u_{i}\mapsto u_{i}+s$ for any $s\in\R^{n}$.

\subsection{Fundamental integration by parts identity}
\label{sec:ibp}

For any differentiable $f\colon \R^{n\Lambda}\to \R$ we write
\begin{equation}
  \label{eq:gen-SRW-1}
  T_{j}f \bydef \ddp{f}{u_{j}^{1}},
  \quad \quad
  \V{T} f \bydef (T_if)_{i\in\Lambda}
  .
\end{equation}
Thus $T_j$ is the infinitesimal generator of translations of the $j^{\text{th}}$
coordinate in the direction $e^{1} =(1,0,\dots,0) \in\R^{n}$.  The
following lemma is a consequence of the translation invariance of
Lebesgue measure, and we will derive all of our
isomorphism theorems from this
identity.  In later sections
of this paper we will derive analogous
results by replacing the translation symmetry by different
symmetries.

\begin{lemma}
  \label{lem:gen-SRW}
  Let $[\cdot]_\beta$ be the unnormalised expectation of the $\R^n$
  model, and let $\E_{i,\V{\ell}}$ be the expectation of the SRW.  Let
  $f\colon \Lambda\times \R^{\Lambda} \to \R$ be smooth with rapid
  decay, and let $\rho\colon \R^{n\Lambda}\to \R$ be smooth with
  moderate growth.  Then:
  \begin{equation}
    \label{eq:gen-lem}
    -\sum_{j\in\Lambda}\cb{\rho(\V{u})u_{j}^{1} \cL f(j, \frac{1}{2} |\V{u}|^{2})}_{\beta}
    =
    \sum_{j\in\Lambda} \cb{(T_j\rho)(\V{u})f(j,\frac{1}{2}|\V{u}|^{2})}_{\beta}
    .
  \end{equation}
  In particular, the following integrated version holds for all
  $f\colon\Lambda\times\R^{\Lambda}\to\R$ with rapid decay:
  \begin{equation}
    \label{eq:generic-iso}
    \sum_{j\in\Lambda}\uspin{\rho(\V{u})u_{j}^{1} f(j, \frac{1}{2} |\V{u}|^{2})}_{\beta}
    =
    \sum_{j\in\Lambda} \uspin{(T_j\rho)(\V{u})\int_0^\infty
      \E_{j,\frac{1}{2}|\V{u}|^2}(f(X_t,\V{L}_t))\,dt}_{\beta}
    .
\end{equation}
\end{lemma}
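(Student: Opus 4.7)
The identity \eqref{eq:gen-lem} is at its heart an integration by parts against the translation-invariant Lebesgue measure $d\V{u}$. The plan is to apply $T_j = \partial/\partial u_j^1$ to the product $\rho(\V{u})\,f(j,\tfrac12|\V{u}|^2)\,e^{-H_\beta(\V{u})}$, sum over $j$, and use that the total integral of a derivative of a rapidly decaying function vanishes. The moderate growth of $\rho$ combined with rapid decay of $f$ in $\V{\ell}$ and the Gaussian factor $e^{-H_\beta}$ (which only provides decay transverse to the translation-invariant direction, but is boosted by the decay in $f$ along the diagonal direction of $\frac12|\V{u}|^2$) will ensure that the boundary terms vanish.

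\textbf{Key steps for the infinitesimal identity.} Computing $T_j[\rho\, f(j,\tfrac12|\V{u}|^2)\, e^{-H_\beta}]$ by the product/chain rule gives three terms: first, $(T_j\rho)\,f(j,\tfrac12|\V{u}|^2)\,e^{-H_\beta}$; second, $\rho\,u_j^1\,(\partial_{\ell_j} f)(j,\tfrac12|\V{u}|^2)\,e^{-H_\beta}$, using that $T_j$ hits $\frac12|\V{u}|^2$ only through its $j$-th component $\frac12(u_j^1)^2 + \frac12\sum_{\alpha\geq 2}(u_j^\alpha)^2$; and third, $\rho\,f(j,\tfrac12|\V{u}|^2)\,(\Delta_\beta \V{u}^1)_j\, e^{-H_\beta}$, since $T_j H_\beta = -(\Delta_\beta \V{u}^1)_j$ (only the $\alpha=1$ component of $H_\beta$ depends on $u_j^1$). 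Summing over $j$ and integrating against $d\V{u}$ kills the total derivative. The resulting identity already contains the $\partial_{\ell_j} f$ piece of $\cL f$; the remaining step is to recognise the third term as the discrete Laplacian piece of $\cL f$. For this I use self-adjointness of $\Delta_\beta$ in the inner product $(\V{a},\V{b})=\sum_j a_j b_j$: at each fixed $\V{u}$,
\begin{equation*}
\sum_{j\in\Lambda} f(j,\tfrac12|\V{u}|^2)\,(\Delta_\beta \V{u}^1)_j
= \sum_{j\in\Lambda} u_j^1 \,(\Delta_\beta f)(j,\tfrac12|\V{u}|^2),
\end{equation*}
where $\Delta_\beta f$ acts in the discrete variable. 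Combining with the $\partial_{\ell_j} f$ term produces exactly $\sum_j \rho\, u_j^1\, \cL f(j,\tfrac12|\V{u}|^2)$, yielding \eqref{eq:gen-lem} after moving this term to the left-hand side.

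\textbf{Integrated version via the semigroup.} For the integrated identity \eqref{eq:generic-iso}, set $f_t(j,\V{\ell}) \bydef \E_{j,\V{\ell}} f(X_t,\V{L}_t)$. As noted in the excerpt, $f_t$ is smooth and rapidly decaying in $\V{\ell}$ whenever $f$ is, and it satisfies $\partial_t f_t = \cL f_t$ with $f_0 = f$. Since on a finite connected graph $\sum_i L_t^i = \sum_i \ell_i + t \to \infty$, rapid decay of $f$ forces $f_t(j,\V{\ell})\to 0$ as $t\to\infty$. Thus
\begin{equation*}
-f(j,\V{\ell}) \;=\; \int_0^\infty \partial_t f_t(j,\V{\ell})\,dt \;=\; \int_0^\infty (\cL f_t)(j,\V{\ell})\,dt.
\end{equation*}
Applying \eqref{eq:gen-lem} to $f_t$ for each $t$ and integrating in $t$ (Fubini is justified by the uniform rapid decay of $f_t$ and moderate growth of $\rho$ against $e^{-H_\beta}$) converts the $\cL f_t$ on the left into $-f$ and produces $\int_0^\infty f_t\,dt$ on the right, which is precisely $\int_0^\infty \E_{j,\frac12|\V{u}|^2} f(X_t,\V{L}_t)\,dt$.

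\textbf{Main obstacle.} The calculations are short, but the main care point is bookkeeping: $T_j$ acts simultaneously on $\rho$, on the composite argument $\frac12|\V{u}|^2$ of $f$, and on $e^{-H_\beta}$, and one must use the chain rule to see that only the $j$-th component of $\frac12|\V{u}|^2$ and only the $\alpha=1$ summand of $H_\beta$ contribute, so that the factor $u_j^1$ emerges in the correct place. Equally, the self-adjointness of $\Delta_\beta$ is essential to convert $(\Delta_\beta \V{u}^1)_j$ into $(\Delta_\beta f)(j,\cdot)$; this is where the symmetry assumption $\beta_{ij}=\beta_{ji}$ enters. Beyond these points, the passage from the infinitesimal to the integrated form is routine given the decay estimates on $f_t$.
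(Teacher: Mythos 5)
Your proposal is correct and follows essentially the same route as the paper: the infinitesimal identity via integration by parts against Lebesgue measure (you compute the total derivative $\sum_j T_j[\rho\, f\, e^{-H_\beta}]$ directly, while the paper phrases the same calculation through the adjoint $T_j^\star$, but the content — computing $T_jH_\beta = -(\Delta_\beta\V{u}^1)_j$, extracting $u_j^1\partial_{\ell_j}f$ from the chain rule, and using self-adjointness of $\Delta_\beta$ to assemble $\cL$ — is identical), and the integrated version via the semigroup $f_t$, Kolmogorov's backward equation, and the fact that rapid decay of $f$ plus $\sum_i L_t^i\to\infty$ kills the boundary term at $t=\infty$. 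No gaps.
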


\begin{remark}
  \label{rem:srw-compact}
  Using \eqref{e:vector-ell} and with
  $(\V{T},\V{f}) \bydef \sum_{i\in\Lambda} T_{i}f_{i}$,
  \eqref{eq:gen-lem} can be restated compactly as
  \begin{equation} 
    \label{e:TLrho}
    -\uspin{(\rho(\V{u})\V{u}^1, (\mathcal{L} \V{f})(\frac{1}{2}|\V{u}|^{2}))}_\beta
    =
    \uspin{(\V{T}\rho(\V{u}), \V{f}(\frac{1}{2}|\V{u}|^{2}))}_\beta
    .
  \end{equation}
\end{remark}

\begin{proof}
  We first prove~\eqref{eq:gen-lem} by integration by parts.  If
  $f_{1},f_{2}\colon \R^{n\Lambda}\to \R$ are differentiable and have
  rapid decay, then integration by parts implies
  \begin{equation}
    \label{eq:gen-SRW-2}
    \cb{(T_{j}f_{1}) f_{2}}_{\beta} = \cb{f_{1} (T^{\star}_{j}f_{2})}_{\beta},
  \end{equation}
  where, for $f\colon\R^{n\Lambda}\to\R$ differentiable,
  \begin{equation}
    \label{eq:gen-SRW-3}
    T^{\star}_{j}f(\V{u}) \bydef -T_{j}f(\V{u}) + (T_{j}H_{\beta}(\V{u}))f(\V{u}).
  \end{equation}
  
  We now compute the right-hand side of \eqref{eq:gen-SRW-3}.
  To simplify notation, let $x_{i}\bydef u^{1}_{i}$ and $\V{x} \bydef (x_{i})_{i\in\Lambda}$.
  By \eqref{eq:GFF-H}, \eqref{eq:Lact}, and using that $T_j$ is the derivative in the $x$-component,
  \begin{equation}
    \label{eq:gen-SRW-4}
    T_{j}H_{\beta}(\V{u})
    =\frac12 T_j \sum_{i\in\Lambda} u_i \cdot (-\Delta u)_i
    =  (-\Delta_{\beta}\V{x})_{j},
  \end{equation}
  so that for a function of the form $f(\frac{1}{2}|\V{u}|^{2})$,
  \begin{equation}
    \label{eq:gen-SRW-5}
    -T^{\star}_{j}f(\frac{1}{2}|\V{u}|^{2}) =
    (\Delta_{\beta}\V{x})_{j}f(\frac{1}{2}|\V{u}|^{2}) +
    x_{j}\ddp{f(\frac{1}{2}|\V{u}|^{2})}{\ell_{j}},
  \end{equation}
  where the last term denotes a partial derivative with respect to the
  $j$th coordinate of the function $f$. By applying
  \eqref{eq:gen-SRW-5} to each of the functions
  $f(j,\frac{1}{2}|\V{u}|^{2})$ and using
  $(\V{f}_{1},\Delta_{\beta}\V{f}_{2})=(\Delta_{\beta}\V{f}_{1},\V{f}_{2})$,
  \begin{equation}
    \label{eq:gen-SRW-6}
    -\sum_{j\in\Lambda}T^{\star}_{j}f(j,\frac{1}{2}|\V{u}|^{2}) 
    =
    \sum_{j\in\Lambda}x_{j}
    \cb{ \Delta_{\beta}f(j,\frac{1}{2}|\V{u}|^{2}) +
      \ddp{f(j,\frac{1}{2}|\V{u}|^{2})}{\ell_{j}}}
    =
    \sum_{j\in\Lambda}x_{j}(\cL f)(j,\frac{1}{2}|\V{u}|^{2}).
  \end{equation}
  To verify \eqref{eq:gen-lem},
  multiply
  \eqref{eq:gen-SRW-6} by $\rho$ and use the result to rewrite
  the left-hand side of \eqref{eq:gen-lem}. The desired equation
  then follows by applying \eqref{eq:gen-SRW-2}:
  \begin{equation*}
    -\sum_{j\in\Lambda}\cb{\rho x_{j} \cL f(j, \frac{1}{2} |\V{u}|^{2})}_{\beta}
    = 
    \sum_{j\in\Lambda}\cb{ \rho T^{\star}_{j}f(j,\frac{1}{2} |\V{u}|^{2})}_{\beta}
    = 
    \sum_{j\in\Lambda}\cb{ (T_{j}\rho)
      f(j,\frac{1}{2}|\V{u}|^{2})}_{\beta}.
  \end{equation*}

  We now prove \eqref{eq:generic-iso}; it suffices to consider $f$
  smooth with rapid decay.  Indeed, if $f_\epsilon$ is the convolution
  of $f$ with a smooth mollifier in the second argument, one has
  $f_\epsilon \to f$ pointwise and the $f_\epsilon$ are bounded
  uniformly in $\epsilon$ by a function with rapid decay, so by
  dominated convergence the result for $f$ follows from the result for
  the $f_\epsilon$.  Let
  $f_{t}(i,\V{\ell}) \bydef \E_{i,\V{\ell}} (f(X_{t},\V{L}_{t}))$, and
  note that $f_t$ is a smooth function with rapid decay since $f$ has
  this property (see below \eqref{e:vector-ell}).  Apply
  \eqref{eq:gen-lem} to $f_{t}$ and rewrite the left-hand side using
  Kolmogorov's backward equation, i.e.,
  $\cL f_{t} = \partial_{t}f_{t}$. The result is
  \begin{equation}
    \label{eq:cor-1}
    -\ddp{}{t}\sum_{j\in\Lambda}\uspin{\rho(\V{u}) u_{j}^1
      \E_{j,\frac{1}{2}|\V{u}|^2}(f(X_{t},\V{L}_{t}))}_{\beta}
    =
    \sum_{j\in \Lambda}\uspin{(T_j\rho)(\V{u})\, \E_{j,\frac{1}{2}|\V{u}|^{2}}f(X_{t},\V{L}_{t})}_{\beta}
    .
  \end{equation}
  To conclude, integrate \eqref{eq:cor-1} over $(0,\infty)$.
  The result follows since
  the boundary term at infinity on the
  left-hand side vanishes. To see this last claim, recall that the graph
  induced by $\beta$ is finite and connected, so
  $L^{i}_{t}\to\infty$ in probability for all vertices
  $i\in\Lambda$. When $f$ has sufficient decay this implies
  \begin{equation}
    \label{eq:van}
    \lim_{T\to\infty} \E_{j,\frac{1}{2}|\V{u}|^{2}} f(X_{T},\V{L}_{T})=0
  \end{equation}
  for all $\V{u}$. If $f$ has sufficient decay and $\rho$ has moderate
  growth then \eqref{eq:van} implies 
  \begin{equation}
    \label{eq:van2}
    \lim_{T\to\infty} [ \rho(\V{u}) \E_{j,\frac{1}{2} | \V{u}|^{2}} f(X_T,\V{L}_T)]_{\beta} = 0
  \end{equation}
  by dominated convergence, as desired.
  This completes the proof of \eqref{eq:generic-iso}.
\end{proof}

Our proofs of the classical isomorphism theorems will apply
Lemma~\ref{lem:gen-SRW} with
the following choices of $\rho$ and $f$;
further details will be given in the proofs.
\begin{itemize}
\item BFS--Dynkin isomorphism: $\rho(\V{u}) = u_a$ and
  $f(j,\V{\ell}) = g(\V{\ell})1_{j=b}$ with $a,b\in\Lambda$;
\item Ray--Knight isomorphism:
  $T_a\rho(\V{u}) \to \delta(u_a)-\delta(u_a-s)$ and
  $f(j,\V{\ell}) \to g(\V{\ell})\delta(\ell_a -\frac{s^2}{2})1_{j=a}$;
\item Eisenbaum isomorphism:
  $\rho(\V{u}) =  \exp (s(\V{h},\V{u}) -  \frac{s^2}{2}(\V{h},\V{1}) )$ 
  and
  $f(j,\V{\ell}) = g(\V{\ell})e^{-(h,\V{\ell})} 1_{j=a}$.
\end{itemize}

\subsection{BFS--Dynkin isomorphism theorem}
\label{sec:bfs-dynk-isom}

We now prove the BFS--Dynkin isomorphism theorem.

\begin{theorem}
  \label{thm:eucl-dynkin}
  Let $[\cdot]_\beta$ be the unnormalised expectation of the $\R^n$ model, and let $\E_{i,\V{\ell}}$ be the expectation of the SRW.
  Let $g\colon \R^\Lambda \to \R$ have rapid decay, and
  let $a,b\in\Lambda$. Then:
  \begin{equation}
    \label{eq:BFSD-g}
    \cb{u_{a}^{1}u_{b}^{1} \,g(\frac{1}{2}|\V{u}|^{2})}_{\beta}
    =
    \cb{\int_0^\infty \E_{a,\frac{1}{2}|\V{u}|^{2}} \pa{g(\V{L}_t)}1_{X_t=b}\, dt }_{\beta}.
  \end{equation}
\end{theorem}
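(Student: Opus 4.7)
The plan is to derive Theorem~\ref{thm:eucl-dynkin} as an immediate specialization of the integrated Ward identity \eqref{eq:generic-iso} in Lemma~\ref{lem:gen-SRW}, following the recipe already indicated in the bullet list at the end of Section~\ref{sec:ibp}. Specifically, I would apply \eqref{eq:generic-iso} with the choices
\begin{equation*}
\rho(\V{u}) = u_a^1, \qquad f(j,\V{\ell}) = g(\V{\ell})\,1_{j=b}.
\end{equation*}

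First I would verify the hypotheses. The weight $\rho$ is linear in $\V{u}$, so it is smooth with polynomial (hence moderate) growth. The function $f$ depends on $\V{\ell}$ only through $g$, which has rapid decay by assumption; the discrete factor $1_{j=b}$ plays no role in the rapid-decay condition, which concerns only the $\R^\Lambda$-argument. Thus $f$ has rapid decay and \eqref{eq:generic-iso} applies. I would then compute $T_j \rho(\V{u}) = \partial u_a^1/\partial u_j^1 = \delta_{aj}$, so the sum over $j$ on the right-hand side of \eqref{eq:generic-iso} collapses to the single term $j=a$, producing the initial condition $X_0 = a$ in the SRW expectation. On the left-hand side, the factor $1_{j=b}$ collapses the sum to $j=b$. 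Substituting yields exactly \eqref{eq:BFSD-g}.

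Since Lemma~\ref{lem:gen-SRW} has already carried out the integration by parts, applied Kolmogorov's backward equation, and handled the time integration (including the vanishing of the boundary term at $t=\infty$ via connectedness of the $\beta$-graph), there is essentially no obstacle remaining: the proof reduces to bookkeeping once the correct pair $(\rho,f)$ is identified. The only mild subtlety worth noting is that the chosen $f$ is not smooth as a function of $j$, but this is harmless because smoothness in the lemma pertains to the continuous variable $\V{\ell}$, and in fact the statement of \eqref{eq:generic-iso} requires only rapid decay of $f$, not smoothness.
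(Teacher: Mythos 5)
Your proposal is correct and matches the paper's proof exactly: both apply Lemma~\ref{lem:gen-SRW} (equation \eqref{eq:generic-iso}) with $\rho(\V{u})=u_a^1$ and $f(j,\V{\ell})=g(\V{\ell})1_{j=b}$, then use $T_j\rho=1_{j=a}$ to collapse the sums. The extra remarks you make about verifying the hypotheses and about smoothness in $j$ being irrelevant are accurate and merely spell out details the paper leaves implicit.
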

\begin{proof}
  Apply Lemma~\ref{lem:gen-SRW} with $\rho(\V{u}) = u_a^1$,
  $f(j,\V{\ell}) = g(\V{\ell})1_{j=b}$, and use
  $T_j\rho(\V{u}) = 1_{j=a}$. 
\end{proof}

If $\V{h} \neq \V{0}$, after replacing $g(\V{\ell})$ by
$g(\V{\ell})e^{-(\V{h},\V{\ell})}$ in \eqref{eq:BFSD-g} the
unnormalised expectation can be normalised using
\eqref{eq:GFF-n}. Since
$\E_{a,\V{\ell}}(g(\V{L}_t)) = \E_a(g(\V{L}_t+\V{\ell}))$ for the simple
random walk, we immediately obtain Dynkin's formulation of this
theorem as stated, e.g., in~\cite[Theorem~2.8]{MR2932978}.

\begin{corollary}
  \label{cor:BFSD-n}
  Let $\avg{\cdot}_\beta$ be the expectation of the $\R^n$ model, and
  let $\E_{i,\V{\ell}}$ be the expectation of the SRW. Let
  $g\colon \R^{\Lambda}\to \R$ be bounded, 
  $a,b\in\Lambda$, and suppose $\V{h}\neq \V{0}$. Then
  \begin{equation}
    \label{eq:BFSD-c}
    \avga{u_{a}^{1}u_{b}^{1}\, g(\frac{1}{2}|\V{u}|^2)}_{\beta,h} =
    \avga{\int_0^\infty \E_{a} \pa{g(\V{L}_t +
        \frac{1}{2}|\V{u}|^2) e^{-(\V{h},\V{L}_{t})} 
        1_{X_{t}={ b}}
        } \, dt }_{\beta,h}.
  \end{equation}
\end{corollary}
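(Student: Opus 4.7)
The plan is to deduce the corollary directly from Theorem~\ref{thm:eucl-dynkin} by the substitution suggested in the paragraph preceding the statement, namely replacing $g(\V{\ell})$ by $\tilde g(\V{\ell}) \bydef g(\V{\ell})\, e^{-(\V{h},\V{\ell})}$. The key observation is that, evaluated at $\V{\ell} = \tfrac12|\V{u}|^2$, the inserted factor becomes $e^{-\frac{1}{2}(\V{h},|\V{u}|^{2})}$, which is exactly the mass contribution distinguishing $\avg{\cdot}_{\beta,h}$ from $\cb{\cdot}_{\beta}$ in \eqref{eq:GFF-n}.

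First I would assume that $g$ is smooth with rapid decay, so that $\tilde g$ inherits rapid decay in the same coordinate as $g$ and Theorem~\ref{thm:eucl-dynkin} is directly applicable to $\tilde g$. On the left-hand side of \eqref{eq:BFSD-g} the extra factor $e^{-\frac{1}{2}(\V{h},|\V{u}|^2)}$ combines with $e^{-H_\beta(\V{u})}$ to give $e^{-H_{\beta,h}(\V{u})}$, so that side becomes $\cb{u_a^1 u_b^1\, g(\tfrac12|\V{u}|^2)}_{\beta,h}$. On the right-hand side I would use the translation identity for local times $\E_{a,\V{\ell}} F(\V{L}_t) = \E_a F(\V{\ell}+\V{L}_t)$ recorded just below \eqref{e:vector-ell}, applied with $\V{\ell} = \tfrac12|\V{u}|^2$ and $F(\V{L}_t) = g(\V{L}_t)\,e^{-(\V{h},\V{L}_t)} 1_{X_t=b}$. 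This produces
\begin{equation*}
  \E_{a,\frac12|\V{u}|^2}\!\pa{g(\V{L}_t)\, e^{-(\V{h},\V{L}_t)} 1_{X_t=b}}
  = e^{-\frac{1}{2}(\V{h},|\V{u}|^{2})}\,\E_a\!\pa{g(\V{L}_t+\tfrac12|\V{u}|^2)\, e^{-(\V{h},\V{L}_t)}\, 1_{X_t=b}},
\end{equation*}
and the factored-out $e^{-\frac{1}{2}(\V{h},|\V{u}|^2)}$ again converts $\cb{\cdot}_\beta$ into $\cb{\cdot}_{\beta,h}$. Dividing both sides by $Z_{\beta,h}$, which is finite since $\V{h}\neq \V{0}$, then yields \eqref{eq:BFSD-c} for smooth, rapidly decaying $g$.

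To extend to bounded measurable $g$, I would approximate by a sequence $g_k$ of smooth, rapidly decaying functions with $g_k\to g$ pointwise and $\sup_k \|g_k\|_\infty < \infty$. Fixing $i_0\in\Lambda$ with $h_{i_0}>0$, the factor $e^{-h_{i_0} L_t^{i_0}}$ provides an integrable dominator in $t$ on the right-hand side (using also that $L_t^{i_0}/t\to $ a positive constant in probability as $t\to\infty$, making the time integral converge), and the Gaussian tail in $u_{i_0}^1$ supplied by the mass term in $\avg{\cdot}_{\beta,h}$ does the same on the left, so dominated convergence transfers the identity to $g$.

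The main (and really only) obstacle is the algebraic bookkeeping that makes the factor $e^{-\frac12(\V{h},|\V{u}|^2)}$ appear symmetrically on both sides so that it can be absorbed into the measure: this is resolved precisely by the local-times translation identity, which also explains why the shift $\V{L}_t + \tfrac12|\V{u}|^2$ appears inside $g$ in \eqref{eq:BFSD-c} but not in \eqref{eq:BFSD-g}.
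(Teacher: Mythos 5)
Your proof is correct and follows essentially the same route as the paper: substitute $g(\V{\ell})\,e^{-(\V{h},\V{\ell})}$ into Theorem~\ref{thm:eucl-dynkin}, absorb the resulting factor $e^{-\frac12(\V{h},|\V{u}|^2)}$ into the measure on both sides, use the local-time translation identity to shift the argument of $g$, and normalise by $Z_{\beta,h}$. The paper leaves the passage from rapidly decaying to merely bounded $g$ implicit, whereas you spell out the approximation/dominated-convergence step; that addition is sound and fills in a detail the paper glosses over.
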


We have rebranded this the BFS--Dynkin isomorphism because a version
of Corollary~\ref{cor:BFSD-n} first appeared in the work of Brydges,
Fr\"{o}hlich, and Spencer~\cite[Theorem~2.2]{MR648362}.

\subsection{Ray--Knight isomorphism}
\label{sec:ray-knight-identity}

The Ray--Knight isomorphism (i.e., the generalised second Ray--Knight
theorem) is also a quick consequence of Lemma~\ref{lem:gen-SRW}.
Several other proofs of this identity exist for the $1$-component GFF,
see \cite{MR1813843,MR3520013} and references therein. 
For an explanation of the name, see~\cite[Remark~2.19]{MR2932978}.

We introduce the following notation for translations to emphasise the
analogy between the classical Ray--Knight isomorphism and its hyperbolic and
spherical versions.  Let $\theta_{s}$ be the translation of all
coordinates by $s\in \R$ in the direction
$e^{1} = (1,0,\dots, 0)\in \R^n$, i.e.,
$\theta_s f(\V{u}) \bydef f(\V{u}+s \V{e}^1)$ for
$\V{e}^1= (e^1, \dots, e^1) \in \R^{n\Lambda}$.  In particular,
$\theta_{s}\V{u} = \V{u} + s \V{e}^{1}$. Note that
$\theta_{s}$ is the group action associated to the diagonal
translation symmetry, which has
infinitesimal generator $\sum_{j\in\Lambda}T_{j}$.

We will write
\begin{equation}
  [\delta_{u_0}(u_a)F]_\beta
\end{equation}
for the expectation of the spin model in which the spin at vertex $a$
is fixed to $u_0=(0,\dots, 0) \in \R^n$.

\begin{theorem}
  \label{thm:eucl-rk}
  Let $[\cdot]_\beta$ be the unnormalised expectation of the $\R^n$ model, and let $\E_{i,\V{\ell}}$ be the expectation of the SRW.
  Let $g\colon\R^{\Lambda}\to\R$ be a smooth compactly supported function, let
  $a\in\Lambda$, and let $s\in\R$.  Then
  \begin{equation}
    \uspin{ g(\frac{1}{2}|\theta_s\V{u}|^2)\delta_{u_0}(u_a)}_\beta 
    = \uspin{  \E_{a,\frac{1}{2}|\V{u}|^2}g(\V{L}_{\tau(\frac{s^2}{2})})\delta_{u_0}(u_a)}_\beta
  \end{equation}
  where $\tau(\gamma) \bydef \inf \{t \,| \,L_a^t \ge \gamma\}$ and $u_0 = (0,\dots,0) \in \R^{n}$.
\end{theorem}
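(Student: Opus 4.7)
The plan is to apply the integrated Ward identity \eqref{eq:generic-iso} with the singular choices suggested in the Ray--Knight bullet following Lemma~\ref{lem:gen-SRW}: $T_a\rho$ should approximate $\delta_{u_0}(u_a)-\delta_{se^1}(u_a)$, and $f(j,\V{\ell})$ should approximate $g(\V{\ell})\delta(\ell_a-\tfrac{s^2}{2})$. No factor $\mathbf{1}_{j=a}$ in $f$ is needed because $T_j\rho$ already vanishes for $j\neq a$. Concretely I would mollify: set $\rho_\epsilon(\V{u})=\phi_\epsilon(u_a^1)\mu_\epsilon(u_a^2,\ldots,u_a^n)$, with $\phi_\epsilon$ smooth, supported strictly inside $(0,s)$, and converging to $\mathbf{1}_{[0,s]}$, and $\mu_\epsilon$ a mollifier of the $(n-1)$-dimensional delta in the transverse directions; set $f_\epsilon(j,\V{\ell})=g(\V{\ell})h_\epsilon(\ell_a-\tfrac{s^2}{2})$ with $h_\epsilon\to\delta$. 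I would first assume $g$ is smooth with rapid decay and extend at the end by approximation.

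With these choices Lemma~\ref{lem:gen-SRW} applies for every $\epsilon>0$, and the proof reduces to computing the $\epsilon\to 0$ limits of both sides of \eqref{eq:generic-iso}. The LHS vanishes: on the support of $\rho_\epsilon$ the coordinate $u_a^1$ stays strictly inside $(0,s)$ and the transverse components are $O(\epsilon)$, so $\tfrac{1}{2}|u_a|^2$ remains uniformly below $s^2/2$ for small $\epsilon$, which forces $h_\epsilon(\tfrac{1}{2}|u_a|^2-\tfrac{s^2}{2})\to 0$ on the support. For the RHS, the key input is the occupation-time calculation $\int_0^\infty h_\epsilon(L_t^a-\tfrac{s^2}{2})\,g(\V{L}_t)\,dt\to g(\V{L}_{\tau(s^2/2)})$ almost surely when $L_0^a<s^2/2$, obtained from the change of variable $u=L_t^a$ supported on $\{X_t=a\}$ where $L_t^a$ is strictly increasing. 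Since $T_a\rho_\epsilon\to\delta_{u_0}(u_a)-\delta_{se^1}(u_a)$, the two pieces contribute $[\E_{a,\tfrac{1}{2}|\V{u}|^2}g(\V{L}_{\tau(s^2/2)})\delta_{u_0}(u_a)]_\beta$ and $-[g(\tfrac{1}{2}|\V{u}|^2)\delta_{se^1}(u_a)]_\beta$ respectively; for the second, the negative bump of $\phi_\epsilon'$ sits at $u_a^1=s-\eta<s$ where $L_0^a<s^2/2$ strictly, and continuity of $\V{L}_{\tau(s^2/2)}$ in $L_0^a$ gives $\V{L}_\tau\to\V{L}_0=\tfrac{1}{2}|\V{u}|^2$ as $\eta\to 0$.

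Equating the limits yields $[g(\tfrac{1}{2}|\V{u}|^2)\delta_{se^1}(u_a)]_\beta=[\E_{a,\tfrac{1}{2}|\V{u}|^2}g(\V{L}_{\tau(s^2/2)})\delta_{u_0}(u_a)]_\beta$, and a change of variables $\V{u}\mapsto\V{u}-s\V{e}^1$ — using translation invariance of Lebesgue measure and invariance of $H_\beta$ under diagonal translation (since $\Delta_\beta\V{e}^1=0$) — rewrites the left side as $[g(\tfrac{1}{2}|\theta_s\V{u}|^2)\delta_{u_0}(u_a)]_\beta$, delivering the theorem. The main technical obstacle is boundary bookkeeping in the limit: the approximate support of $\phi_\epsilon$ and the zero set $\{\tfrac{1}{2}|u_a|^2=\tfrac{s^2}{2}\}$ both accumulate at $u_a^1=s$, so a symmetric approximation would introduce spurious factors of $\tfrac{1}{2}$ in both terms; smoothing $\phi_\epsilon$ strictly from inside $(0,s)$ decouples the supports on the LHS and places the $\delta_{se^1}$-mass of $T_a\rho_\epsilon$ at points where the occupation-time limit is unambiguous.
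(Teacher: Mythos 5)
Your overall strategy matches the paper's: apply Lemma~\ref{lem:gen-SRW} with $T_a\rho$ approximating $\delta_{u_0}(u_a)-\delta_{se^1}(u_a)$ and $f$ carrying a mollified delta in $\ell_a$, arrange for the algebraic side of \eqref{eq:generic-iso} to vanish by decoupling the supports of $\rho$ and $h_\epsilon$, and read off the Ray--Knight identity from an occupation-time limit on the probabilistic side together with translation invariance of $[\cdot]_\beta$. The construction of the mollifiers, the boundary bookkeeping near $u_a^1=s$, and the final change of variables $\V{u}\mapsto\V{u}-s\V{e}^1$ are all in the right spirit and essentially reproduce the paper's argument (the paper applies translation invariance in the middle rather than at the end, but this is cosmetic).

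There is, however, a gap in the occupation-time step, and it originates in the remark that ``no factor $\mathbf{1}_{j=a}$ in $f$ is needed because $T_j\rho$ already vanishes for $j\neq a$.'' This conflates two distinct roles of the index in \eqref{eq:generic-iso}. The vanishing of $T_j\rho$ for $j\neq a$ kills the sum over \emph{starting vertices} on the right-hand side, leaving only the walk started at $a$; but inside that term the integrand is $f(X_t,\V{L}_t)$, and $X_t$ wanders over all of $\Lambda$. With $f(j,\V{\ell})=g(\V{\ell})h_\epsilon(\ell_a-\tfrac{s^2}{2})$ (no indicator), the time integral also picks up excursions of $X$ away from $a$ during which $L_t^a$ is constant but lies in the $\epsilon$-window, and your change-of-variable $u=L_t^a$ ``on $\{X_t=a\}$'' only accounts for the at-$a$ part. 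Almost surely the away-from-$a$ part vanishes for small $\epsilon$, but the family $\int_0^\infty h_\epsilon(L_t^a-\tfrac{s^2}{2})\mathbf{1}_{X_t\neq a}\,g(\V{L}_t)\,dt$ is not dominated: it is of order $\epsilon^{-1}$ on events of probability of order $\epsilon$, and its $\E_{a,\ell}$-expectation does not tend to zero (already for the two-vertex walk with $g\equiv 1$ it tends to a strictly positive constant). So the $L^1(\E_{a,\ell})$ limit you need is not $g(\V{L}_{\tau(s^2/2)})$, and the claimed limit of the right-hand side of \eqref{eq:generic-iso} fails. The fix is to set $f(j,\V{\ell})=g(\V{\ell})h_\epsilon(\ell_a-\tfrac{s^2}{2})\mathbf{1}_{j=a}$, exactly as in the BFS--Dynkin and Eisenbaum applications; then $f(X_t,\V{L}_t)$ carries $\mathbf{1}_{X_t=a}$, the time integral becomes precisely $\int h_\epsilon(u-\tfrac{s^2}{2})g(\V{L}_{\tau(u)})\,du$, the convergence is dominated, and the rest of your argument goes through. (This is also consistent with \eqref{eq:tlim} in the paper, whose middle member displays only the single term $u_a^1$ rather than the full sum $\sum_j u_j^1$ --- exactly what results when $f$ carries $\mathbf{1}_{j=a}$.)
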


\begin{proof}[Proof of Theorem~\ref{thm:eucl-rk}]
  Since the identity is trivial if $s=0$, assume $s\neq 0$.  The proof
  is by applying Lemma~\ref{lem:gen-SRW} with
  $\rho_{\epsilon}(\V{u}) \equiv \rho_{\epsilon}(u_{a})$,
  $f(j,\V{\ell}) \bydef g(\V{\ell})\eta_{\epsilon}(\ell_a) 1_{j=a}$, and the functions
  $\rho_{\epsilon} \colon \R^n \to \R$ and $\eta_{\epsilon} \colon\R\to\R$ chosen such that
  $T_{a}\rho_{\epsilon}$ and $\eta_{\epsilon}$ are smooth compactly supported approximations
  to $\delta_{ u_0}-\delta_{\theta_s u_0}$ and
  $\delta_{\frac{1}{2}s^{2}}$ subject to
  $\rho_{\epsilon}(v)\eta_{\epsilon}(\frac{1}{2}|v|^{2})=0$ for all $v\in \R^n$. Explicitly,
  with $\delta^{(k)}_{\tilde u, \epsilon}(x)$ denoting a smooth
  approximation to a delta function at $\tilde u\in \R^{k}$ with
  support in $|x|<\varepsilon/2$, we may take
  \begin{equation}
    \rho_{\epsilon}(u_a) = \int_0^{s-\varepsilon}\delta_{u_{0},\varepsilon}^{(n)}(\theta_{-r}u_a)\, dr,
    \quad \eta_{\epsilon}(\ell) =
    \delta_{0,\varepsilon}^{(1)}\left(\ell- \frac12 s^2 - \frac{\varepsilon}{2}\right).
  \end{equation}
  By Lemma~\ref{lem:gen-SRW}, since $\rho_{\epsilon}(u_a)\eta_{\epsilon}(\frac{1}{2}|u_a|^2) = 0$,
  \begin{equation}
    \label{eq:tlim}
    \uspin{T_a\rho_{\varepsilon}(u_a)\int_0^\infty \E_{a,\frac{1}{2}|\V{u}|^2}(g(\V{L}_t)\eta_\varepsilon(L_t^a)1_{X_t=a}) \, dt}_{\beta}
    =
    \uspin{\rho_\varepsilon(u_a)u_{a}^{1} g( \frac{1}{2}
      |\V{u}|^{2})\eta_{\varepsilon} 
      (\frac{1}{2}|u_a|^2)}_{\beta} = 0.
  \end{equation}

  Let $dL^a=1_{X_t=a} \, dt$.
  By the continuity\footnote{
    To see continuity, since $g$ is compactly supported, it suffices to show that for a sufficiently large $T$,
    $s\mapsto \E_{a,\V{\ell}} g(\V{L}_{\tau(\frac12 s^2) \wedge T})$ is continuous.
    Since $g$ is Lipschitz, it suffices to show
    $\E_{a,\V{\ell}}|\V{L}_{\tau(\frac12 s^2-\delta)\wedge
      T}-\V{L}_{\tau(\frac12 s^2 + \delta)\wedge T}|_{1} \to 0$ as
    $\delta \to 0$, $|\cdot|_{1}$ the $1$-norm.
    Let $J_{\delta}$ be the event that a jump occurs in the interval $[\frac12
    s^2-\delta, \frac12 s^{2} + \delta]$. Then
    \begin{equation*}
      \E_{a,\V{\ell}}|\V{L}_{\tau(\frac12 s^2-\delta)\wedge T}-\V{L}_{\tau(\frac12 s^2 + \delta)\wedge T}|_1
      \leq \delta + T \P_{a,\ell}(J_{\delta}) = O_{T}(\delta).
    \end{equation*}
  }
  of $s\mapsto \E_{a,\V{\ell}} g(\V{L}_{\tau(\frac12 s^2)})$
  and the definition of $\eta_\epsilon$,
    \begin{align}
    \label{eq:rklimpre}
    \lim_{\epsilon\to 0}
    \E_{a,\V{\ell}}\int_{0}^{\infty}  g(\V{L}_{t})
    \eta_{\epsilon}(L^{a}_{t})1_{X_{t}=a} \, dt
    &=
    \lim_{\epsilon\to 0} 
    \E_{a,\V{\ell}}\int_{0}^{\infty}  g(\V{L}_{ \tau(L^{a})})
      \eta_{\epsilon}(L^{a}) \, dL^{a}
      \nnb
    &=
    \lim_{\epsilon\to 0} 
    \int_{0}^{\infty} \E_{a,\V{\ell}}( g(\V{L}_{\tau(\gamma)}))
    \eta_{\epsilon}(\gamma) \, d\gamma
    = \E_{a,\V{\ell}}g(\V{L}_{\tau(\frac12 s^2)}),
  \end{align}
  uniformly in $\V{\ell}$ with $\ell_a \leq \frac12 s^2$.
  Since $T_a\rho_{\epsilon}(u_a) = \delta_{u_0,\varepsilon}^{(n)}(u_a) -
  \delta_{u_0\varepsilon}^{(n)}(\theta_{-(s-\varepsilon)}u_a)$,
  taking the
  limit $\varepsilon \rightarrow 0$  in \eqref{eq:tlim}
  yields, by \eqref{eq:rklimpre},
  \begin{equation}
    \uspin{ \E_{a,\frac{1}{2}|\V{u}|^2}(g(\V{L}_{\tau(\frac{s^2}{2})}))\delta_{u_0}(u_a)}_{\beta}
    =
    \uspin{
      \E_{a,\frac{1}{2}|\theta_s\V{u}|^2}(g(\V{L}_{\tau(\frac{s^2}{2})}))\delta_{u_0}(u_a)}_{\beta}
  \end{equation}
  where we have used the invariance of $[\cdot]_\beta$ under
  $\theta_s$, i.e., $[F]_{\beta} = [\theta_{s}F]_{\beta}$.
  To conclude, observe
  \begin{equation}
   \uspin{
      \E_{a,\frac{1}{2}|\theta_s\V{u}|^2}(g(\V{L}_{\tau(\frac{s^2}{2})}))\delta_{u_0}(u_a)}_{\beta} = \uspin{ g(\frac{1}{2}|\theta_s\V{u}|^2)\delta_{u_0}(u_a)}_\beta
  \end{equation}
  since $\tau(\frac{1}{2}s^{2})=0$ if $L_0^a=\frac{s^2}{2}$.
\end{proof}

\subsection{Eisenbaum isomorphism theorem}
\label{sec:eisenb-isom-theor}

The Eisenbaum isomorphism theorem involves a continuous-time random
walk with killing.  Thus let $X_{t}$ be a killed random walk with
killing rates $\V{h}$, and let $\V{L}_{t}$ be its local times.  To be
precise, the generator of the joint process
$(X_{t},\V{L}_{t})_{t\geq 0}$ is given by
\begin{equation}
  \label{eq:SRW-gen-k}
  (\cL^{h} f)(i,\V{\ell})
  \bydef \cL
  f(i,\V{\ell}) 
  - h_{i}f(i,\V{\ell}) 
  ,\quad \text{i.e.,}\quad
  \cL^h = \cL - \V{h}.
\end{equation}
for $f\colon \Lambda\times\R^{\Lambda}\to \R$ smooth.  We let
$\E^{h}_{i,\V{\ell}}$ denote the
corresponding (deficient) expectation, i.e., integration with respect to the
density of the killed random walk, which may have measure less than $1$.
Note that the killing does not depend on the initial local times, i.e.,
\begin{equation}
  \label{eq:SRW-gen-k-1}
  \E^{h}_{i,\V{\ell}} \pb{g(X_{t},\V{L}_{t})} 
  = 
  \E_{i,\V{\ell}}\pb{g(X_{t},\V{L}_{t})e^{-\sum_{j\in\Lambda}h_{j}(L^{j}_{t}-\ell_j)}},
\end{equation}
and we can hence write
\begin{equation}
  \label{eq:SRW-nokill}
  \E_{i,\V{\ell}}(g(X_t,\V{L}_t)e^{-\sum_{j\in\Lambda} h_jL_t^j}) =
  \E_{i,\V{\ell}}^h(g(X_t,\V{L}_t))e^{-\sum_{j\in\Lambda} h_j
    \ell_j}.
\end{equation}

Probabilistically, the deficient law can be realised as a Markov
process with state space
$(\Lambda\cup \{\cemetery\})\times \R^{\Lambda\cup\{\cemetery\}}$,
where $\cemetery \notin\Lambda$ is an absorbing `cemetery' state.  The
walk jumps from $i$ to $\cemetery$ with rate $h_{i}$. The generator
acts on functions that are identically zero at $\cemetery$, and we
identify such functions with functions on
$\Lambda\times \R^{\Lambda}$.  We denote the time of the one and only
jump to $\cemetery$ by $\death$.

The following theorem is a version of Eisenbaum's isomorphism~\cite{MR1459468}.
\begin{theorem}
  \label{thm:eucl-eis}
  Suppose $\V{h}\neq \V{0}$. Let $\spin{\cdot}_{\beta,h}$ be the
  expectation of the $\R^n$ model, and let $\E_{i,\V{\ell}}^h$ be the
  expectation of the killed SRW.  Let $g\colon\R^{\Lambda}\to \R$ have
  moderate growth, let $a\in\Lambda$, and let $s\in\R$. Then
  \begin{equation}
    \label{eq:EIS-g}
    \spin{(\theta_{s}u_{a}^{1})g(\frac{1}{2}|\theta_{s}\V{u}|^{2})}_{\beta,h} 
    =
    s\sum_{i\in\Lambda}h_{i}\spin{ \int_{0}^{\infty}
      \E^{h}_{i,\frac{1}{2}|\theta_{s}\V{u}|^{2}}(g(\V{L}_{t})1_{X_{t}=a}) \, dt}_{\beta,h}.
  \end{equation}
\end{theorem}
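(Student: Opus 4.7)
The plan is to apply Lemma~\ref{lem:gen-SRW} with the choices already indicated before Section~\ref{sec:bfs-dynk-isom}, namely
\begin{equation*}
\rho(\V{u}) = \exp\!\left(s(\V{h},\V{u}^{1}) - \tfrac{s^{2}}{2}(\V{h},\V{1})\right), \qquad f(j,\V{\ell}) = g(\V{\ell})\, e^{-(\V{h},\V{\ell})}\, 1_{j=a}.
\end{equation*}
These choices are motivated by two compatible calculations. Completing the square and using the translation invariance of $H_{\beta}$ gives the identity $\rho(\V{u})\, e^{-H_{\beta,h}(\V{u})} = e^{-H_{\beta,h}(\theta_{-s}\V{u})}$, so the change of variables $\V{u}\mapsto\theta_{s}\V{u}$ yields the \emph{shift identity}
\begin{equation*}
\uspin{\rho(\V{u})\, F(\V{u})}_{\beta,h} = \uspin{F(\theta_{s}\V{u})}_{\beta,h}
\end{equation*}
for any $F$ whose expectation is finite. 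Second, the factor $e^{-(\V{h},\V{\ell})}$ evaluated at $\V{\ell}=\tfrac{1}{2}|\V{u}|^{2}$ produces exactly the mass term that turns $\uspin{\cdot}_{\beta}$ into $\uspin{\cdot}_{\beta,h}$, while on the walk side the killing identity~\eqref{eq:SRW-nokill} converts the unkilled expectation into the killed one $\E^{h}_{j,\V{\ell}}$.

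With these observations in hand, I would compute $T_{j}\rho = s\, h_{j}\, \rho$ and substitute into the integrated Ward identity~\eqref{eq:generic-iso}. After combining the $e^{-(\V{h},\frac{1}{2}|\V{u}|^{2})}$ weight with the Gaussian density on the left, and applying~\eqref{eq:SRW-nokill} inside the time integral on the right, the identity becomes
\begin{equation*}
\uspin{\rho(\V{u})\, u_{a}^{1}\, g(\tfrac{1}{2}|\V{u}|^{2})}_{\beta,h} = s\sum_{j\in\Lambda} h_{j}\, \uspin{\rho(\V{u})\int_{0}^{\infty}\E^{h}_{j,\frac{1}{2}|\V{u}|^{2}}\!\left(g(\V{L}_{t})\, 1_{X_{t}=a}\right) dt}_{\beta,h}.
\end{equation*}
Applying the shift identity once to each side (with $F$ equal to the factor multiplying $\rho$), and then normalising by $Z_{\beta,h}$, produces exactly~\eqref{eq:EIS-g}.

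The only delicate point is that Lemma~\ref{lem:gen-SRW} was stated for $\rho$ of polynomial growth, whereas this $\rho$ grows exponentially in the components $u_{i}^{1}$. This is not a real obstacle: the total weight $\rho(\V{u})e^{-H_{\beta,h}(\V{u})} = e^{-H_{\beta,h}(\theta_{-s}\V{u})}$ is once again a Gaussian density, so all integrals converge absolutely and the boundary terms in the integration by parts underlying the lemma still vanish. If one prefers to remain within the literal hypotheses, one may replace $\rho$ by a smooth truncation and pass to the limit by dominated convergence, using $\V{h}\neq\V{0}$ to obtain rapid decay of $f$ through the factor $e^{-(\V{h},\V{\ell})}$. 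Beyond this bookkeeping, the proof reduces to a direct composition of the shift identity and the killing identity.
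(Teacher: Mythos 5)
Your proof is correct and takes essentially the same route as the paper: the same choices of $\rho$ and $f$, the computation $T_j\rho = sh_j\rho$, the killing identity \eqref{eq:SRW-nokill}, and translation invariance of the Gaussian weight, with the same remark that the Gaussian decay of $f$ compensates for $\rho$ not having moderate growth. Your one organizational difference is isolating the translation step as an explicit shift identity $\uspin{\rho(\V{u})F(\V{u})}_{\beta,h}=\uspin{F(\theta_s\V{u})}_{\beta,h}$ up front, which is a slightly cleaner bookkeeping of what the paper does by expanding $\rho(\V{u}) = e^{\frac{1}{2}(\V{h},|\V{u}|^2)}e^{-\frac{1}{2}(\V{h},|\theta_{-s}\V{u}|^2)}$ and invoking $[\theta_sF]_\beta=[F]_\beta$ inline.
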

\begin{proof}
 We apply Lemma~\ref{lem:gen-SRW} with
  \begin{align}
    \label{eq:eis-rho}
      \rho(\V{u}) 
      &\bydef e^{s(\V{h},\V{u}) - \frac{s^2}{2}(\V{h},\V{1})} =
        e^{\frac{1}{2}(\V{h},|\V{u}|^2)}
        (e^{-\frac{1}{2}(\V{h},|\theta_{-s}\V{u}|^2)}), \\
      f(j,\V{\ell}) &\bydef
      g(\V{\ell})e^{-(\V{h},\V{\ell})} 1_{j=a}.
  \end{align}
  While $\rho$ does not have moderate growth in the sense of our
  conventions, the very rapid (Gaussian) decay of $f$ is sufficient
  for the lemma to hold. We then
  use that $(T_j\rho)(\V{u}) = sh_j \rho(\V{u})$ to obtain
  \begin{align}
    \label{eq:flat-eis-1}
      s\sum_{j\in\Lambda} h_j\uspin{\rho(\V{u})\int_0^\infty
        \E_{j,\frac{1}{2}|\V{u}|^2}(
        g(\V{L}_t)1_{X_{t}=a}e^{-(\V{h},\V{L}_t)})\,dt}_{\beta} 
      &=
      \sum_{j\in\Lambda}\uspin{\rho(\V{u})u_{j}^{1} 
        g(\frac{1}{2} |\V{u}|^{2}) 1_{j=a}
        e^{-\frac{1}{2}(\V{h},|\V{u}|^2)}}_{\beta}
      \nnb
      &= 
      \uspin{u_{a}^{1} 
        g(\frac{1}{2} |\V{u}|^{2})e^{-\frac{1}{2}(\V{h},|\theta_{-s}\V{u}|^2)}}_{\beta},
  \end{align}
  by inserting the definition \eqref{eq:eis-rho}.
  Using \eqref{eq:SRW-nokill} to substitute
  \begin{equation}
    \rho(\V{u})\E_{j,\frac{1}{2}|\V{u}|^2}(g(\V{L}_t)e^{(-\V{h},\V{L}_t)}) 
    = \E_{j,\frac{1}{2}|\V{u}|^2}^h(g(\V{L}_t))e^{-\frac{1}{2}(\V{h},|\theta_{-s}\V{u}|^2)}
  \end{equation}
  and by the translation invariance of
  $\cb{\cdot}_{\beta}$, i.e., 
  $[\theta_sF]_\beta = [F]_\beta$, we can rewrite
  \eqref{eq:flat-eis-1} as
  \begin{equation}
    s\sum_{j\in\Lambda}h_j\uspin{ \ob{\int_0^\infty
      \E_{j,\frac{1}{2}|\theta_s\V{u}|^2}^h(g(\V{L}_t)1_{X_{t}=a})\,dt}e^{-\frac{1}{2}(\V{h},|\V{u}|^2)}
    }_{\beta} 
    = 
    \uspin{(\theta_s u_a^1) g(\frac{1}{2}|\theta_s \V{u}|^2) e^{-\frac{1}{2}(\V{h},|\V{u}|^2)}}_{\beta}.
  \end{equation}
  This can be re-written in terms of $\cb{\cdot}_{\beta,h}$ as 
    \begin{equation}
    s\sum_{j\in\Lambda}h_j\uspin{ \int_0^\infty
      \E_{j,\frac{1}{2}|\theta_s\V{u}|^2}^h(g(\V{L}_t)1_{X_{t}=a})\,dt}_{\beta,h} 
    = 
    \uspin{(\theta_s u_a^1) g(\frac{1}{2}|\theta_s \V{u}|^2)}_{\beta,h},
  \end{equation} 
  and normalising gives \eqref{eq:EIS-g}.
\end{proof} 

We will now derive the usual formulation of the Eisenbaum isomorphism
as a corollary. For notational simplicity, suppose $n=1$, and let
$u_{i}=u_{i}^{1}$.  Writing the translations explicitly,
Theorem~\ref{thm:eucl-eis} yields, for
$\V{s}=(s,s,\dots, s)\in\R^{\Lambda}$, $s\neq 0$,
\begin{align}
  \spin{\frac{u_{a}+s}{s}g( \frac{1}{2}|\V{u}+\V{s}|^{2})}_{\beta,h} 
  &=
  \sum_{i\in\Lambda}h_{i}\spin{\E^{h}_{i,\frac{1}{2}|\V{u}+\V{s}|^{2}} \int_{0}^{\infty} g(\V{L}_{t}) 
  1_{X_{t}=a}\, dt}_{\beta,h}\nnb
  &=
  \sum_{i\in\Lambda}h_{i}\spin{\E_{i} \int_{0}^{\infty}g(\frac{1}{2}|\V{u}+\V{s}|^{2}+\V{L}_{t}) 
  1_{X_{t}=a}e^{-\sum_{j\in\Lambda}h_{j}L_{t}^{j}}\, dt}_{\beta,h} \nnb
  &=
  \sum_{i\in\Lambda}h_{i}\spin{\E_{a} \int_{0}^{\infty}g(\frac{1}{2}|\V{u}+\V{s}|^{2}+\V{L}_{t}) 
  1_{X_{t}=i}e^{-\sum_{j\in\Lambda}h_{j}L_{t}^{j}}\, dt}_{\beta,h}
\end{align}
where in the last line we have used the reversibility of the killed random
walk. Bringing the sum inside the Gaussian expectation, we recognise
the conditional density that $X$ jumps from $i$ to $\cemetery$ at time $t$,
proving the following corollary.
Recall $\death$ is the time of the jump to the cemetery state.

\begin{corollary}
  \label{cor:EIS-c}
  Suppose $\V{h}\neq \V{0}$. Let $\spin{\cdot}_{\beta,h}$ be the
  expectation of the $\R^n$ model, and let $\E_{i,\V{\ell}}^h$ be the
  expectation of the killed SRW.  Suppose $g\colon\R^{\Lambda}\to\R$
  has moderate growth, $a\in\Lambda$, and
  $\V{s}=(s,s,\dots, s) \in \R^\Lambda$ with $s\neq 0$. Then
  \begin{equation}
    \label{eq:EIS-c}
    \avga{\frac{u_{a}+s}{s}g( \frac{1}{2}|\V{u}+\V{s}|^{2})}_{\beta,h} = \avga{
      \E_{a}^{h} \pb{ g( \frac{1}{2}|\V{u}+\V{s}|^{2} + \V{L}_{\death})}}_{\beta,h}.
  \end{equation}
\end{corollary}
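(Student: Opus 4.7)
The plan is to specialise Theorem~\ref{thm:eucl-eis} to $n=1$ with $\V{s}=(s,\ldots,s)\in\R^\Lambda$ so that the translation $\theta_s$ acts on each scalar coordinate by $u_i\mapsto u_i + s$, and then massage the right-hand side of \eqref{eq:EIS-g} into the form of the right-hand side of \eqref{eq:EIS-c}. All of this work happens under the spin expectation $\spin{\cdot}_{\beta,h}$, which acts as a passive outer layer throughout; the real content is a probabilistic rewriting of the $t$-integral.

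First, I would trade the killed expectation $\E^h_{i,\V{\ell}}$ for the unkilled expectation using \eqref{eq:SRW-nokill}, pulling out the factor $e^{-\sum_j h_j \ell_j}$ and replacing the remaining expectation $\E_{i,\V{\ell}}$ by $\E_i$ with the initial local time $\V{\ell}$ absorbed into the argument of $g$ via $\E_{i,\V{\ell}}[F(\V{L}_t)] = \E_i[F(\V{\ell}+\V{L}_t)]$. Applied with $\V{\ell}=\frac{1}{2}|\V{u}+\V{s}|^2$, this turns the right-hand side of \eqref{eq:EIS-g} into
\begin{equation*}
s\sum_{i\in\Lambda}h_i\spin{\E_i\int_0^\infty g\!\left(\tfrac12|\V u+\V s|^2+\V L_t\right)1_{X_t=a}\,e^{-\sum_j h_j L_t^j}\,dt}_{\beta,h}.
\end{equation*}
Next I would use reversibility of the SRW with respect to the counting measure on $\Lambda$, which follows from $\beta_{ij}=\beta_{ji}$, together with the symmetry of the functional $\V{h}\cdot\V{L}_t$ under path reversal, to swap the roles of $i$ and $a$ and replace $\E_i[\cdots 1_{X_t=a}]$ by $\E_a[\cdots 1_{X_t=i}]$.

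The last and main step is to bring $\sum_i h_i$ inside the expectation and recognise the resulting expression as the law of the killing time. For the walk killed at rates $\V{h}$ started at $a$, the joint density of $(\death, X_{\death^-})$ satisfies
\begin{equation*}
\P_a\!\left(\death\in dt,\ X_{\death^-}=i\right)=h_i\,\E_a\!\left[1_{X_t=i}\,e^{-\sum_j h_j L_t^j}\right]\,dt,
\end{equation*}
and $\V{L}_\death = \V{L}_{\death^-}$ because local times are continuous. Summing over $i$ and integrating over $t\in(0,\infty)$ therefore rewrites the $t$-integral above as $\E_a^h\!\left[g\!\left(\tfrac12|\V u+\V s|^2+\V L_\death\right)\right]$, which is exactly the integrand on the right-hand side of \eqref{eq:EIS-c}. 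Dividing through by the factor $s$ already present on the left of \eqref{eq:EIS-g} (absorbed into the fraction $(u_a+s)/s$) and normalising $\cb{\cdot}_{\beta,h}$ to $\spin{\cdot}_{\beta,h}$ completes the derivation.

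The step requiring the most care is the recognition of the killing density: one must verify that the exchanges of sum, integral, and spin expectation are justified, which follows from the moderate growth of $g$ together with the rapid Gaussian decay of $e^{-H_{\beta,h}}$ (since $\V{h}\neq\V{0}$) and the fact that $\V{L}_t$ is integrable in $t$ against the killing density. Everything else is a bookkeeping exercise once the correspondences $\E^h\leftrightarrow\E$ with exponential factor and reversibility are invoked.
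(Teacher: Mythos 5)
Your proof is correct and follows essentially the same route as the paper: specialise Theorem~\ref{thm:eucl-eis} to $n=1$, convert $\E^h_{i,\V\ell}$ to an unkilled expectation with an exponential functional via \eqref{eq:SRW-nokill}, swap $i$ and $a$ by reversibility of the symmetric walk, and identify $\sum_i h_i\,\E_a[\,\cdot\,1_{X_t=i}e^{-\sum_j h_j L_t^j}]\,dt$ as the density of the killing time $\death$. Your spelled-out formula for $\P_a(\death\in dt,\ X_{\death^-}=i)$ is just the explicit form of what the paper calls ``the conditional density that $X$ jumps from $i$ to $\cemetery$ at time $t$.''
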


\section{Isomorphism theorems for hyperbolic geometry}
\label{sec:hyperbolic}

In this section we describe spin models with hyperbolic symmetry, the
associated vertex-reinforced jump processes, and isomorphism theorems
that link these objects. The proofs follow closely those of
Section~\ref{sec:euclidean}, but with the translation symmetry of
$\R^{n}$ replaced by the boost symmetry of $\HH^{n}$.

\subsection{The vertex-reinforced jump process}
\label{sec:vert-reinf-jump}

The \emph{vertex-reinforced jump process} (VRJP) $X_{t}$ with
\emph{initial local time $\V{L}_{0}\in (0,\infty)^{\Lambda}$} and
\emph{initial vertex $v\in\Lambda$} is the process $X_{t}$ with
$X_{0}=v$ and jump rates
\begin{equation}
  \label{eq:VRJP}
  \P_{v,\V{L}_{0}}\cb{X_{t+dt}=j \mid (X_{s})_{s\leq t}, X_{t}=i} =
  \beta_{ij}L^{j}_{t}\,dt,
\end{equation}
where the local times $\V{L}_{t}$ of $X_{t}$ are defined as in
\eqref{eq:local}.  Note that \eqref{eq:RW-models} with $\epsilon=1$ is
the special case of \eqref{eq:VRJP} in which $\V{L}_{0}=\V{1}$.  The
construction of a VRJP with given initial local times is
straightforward, see~\cite[Section~2]{MR2027294}.
Our assumption that the graph induced by the edge weights $\beta$ is connected
implies that $L^{j}_{t}\to\infty$ as $t\to\infty$ in probability for
all $j$ and all sets of initial local times, see
\cite[Lemma~1]{MR2027294}.

As in Section~\ref{sec:euclidean}, it will be helpful to view $X_{t}$
as the marginal of the process $(X_{t},\V{L}_{t})$ that includes the
local times $\V{L}_{t}$. For convenience we will also call this joint
process a VRJP. Unlike $X_{t}$, the joint process $(X_{t},\V{L}_{t})$
is a Markov process.  The generator $\cL$ of the joint process
acts on smooth functions $g\colon\Lambda\times\R^{\Lambda}\to\R$ by
\begin{equation}
  \label{eq:VRJP-generator}
  (\cL g)(i,\V{\ell}) = \sum_{j\in\Lambda}\beta_{ij}\ell_{j}
  (g(j,\V{\ell}) - g(i,\V{\ell})) 
  + \ddp{g(i,\V{\ell})}{\ell_{i}}.
\end{equation}
We note that $g_t(i,\V{\ell})=\E_{i,\V{\ell}}\,g(X_t,\V{L}_t)$ is smooth in $\ell$
for any $t>0$ if $g$ is smooth.  This can be seen, for example, from
the explicit construction of the VRJP in \cite[Section~2]{MR2027294}.

\subsection{Hyperbolic symmetry}
\label{sec:hyperb-spin-model}

The VRJP will be seen to be closely related with hyperbolic symmetry,
i.e., the Lorentz group $O(n,1)$.  In this subsection we discuss the
relevant aspects of this group and its action on Minkowski and
hyperbolic space.

\begin{figure}
\begin{center}
  \input{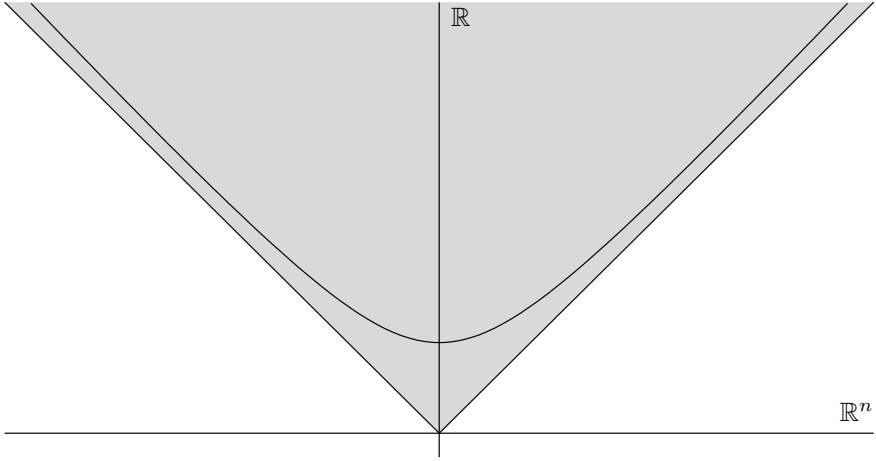}
\end{center}
\caption{Minkowski space $\R^{n,1}$. The shaded area is the causal future 
  and the hyperboloid is $\HH^n$.\label{fig:minkowski}}
\end{figure}

\paragraph{Minkowski space}

\emph{Minkowski space $\R^{n,1}$} is the vector space
$\R^{n+1}$ equipped with the indefinite \emph{Minkowski inner product}
\begin{equation}
  \label{eq:Mip}
  u_1 \cdot u_2
  \bydef
  - u_1^{0} u_2^{0}  + \sum_{\alpha=1}^n u_1^\alpha u_2^\alpha,
\end{equation}
where each
$u_{i} = (u^{0}_{i},u^1_{i}, \dots, u_{i}^{n})\in \R^{n,1}$.  The
points $u \in \R^{n,1}$ with $u\cdot u < 0$ are called
\emph{time-like}. The set of time-like vectors with $u^0>0$ is called
the \emph{causal future}; schematically this is the shaded area in
Figure~\ref{fig:minkowski}.  In what follows, for $u\in \R^{n,1}$ it
will be notationally convenient to write $z=u^{0}$ and $x=u^1$.

The group preserving the quadratic form $u\cdot u$ given by
\eqref{eq:Mip} is the \emph{Lorentz group} $O(n,1)$.  The
\emph{restricted Lorentz group} $SO^+(n,1)$ is the subgroup of
$T \in O(n,1)$ with $\det T = 1$ and $T_{00} >0$.  $SO^{+}(n,1)$
preserves the causal future, see Figure~\ref{fig:minkowski}.  The
elements of $SO^+(n,1)$ can be written as compositions of
rotations and boosts. We briefly review the aspects of
these transformations needed for what follows. Rotations act on the
coordinates $u^{1},\dots, u^{n}$ exactly as in Euclidean space, while
a boost $\theta_{s}$ by $s\in\R$ in the $xz$-plane acts by
\begin{equation}
  \label{eq:boost}
  \theta_{s}z = x\sinh s + z \cosh s,
  \quad
  \theta_{s}x = x\cosh s + z \sinh s,
  \quad
  \theta_{s}u^{\alpha} = u^{\alpha}, \quad (\alpha=2,\dots, n),
\end{equation}
and similarly for boosts in other planes. From \eqref{eq:boost} it
follows that the infinitesimal generator $T$ of boosts in the
$xz$-plane is the linear differential operator satisfying
\begin{equation}
  \label{eq:boost-inf}
  T z = x, \quad T x = z, \quad T u^{\alpha} =0, \quad (\alpha=2,\dots, n),
\end{equation}
i.e.,
\begin{equation}
  T \bydef z \ddp{}{x}+ x\ddp{}{z}.
\end{equation}

\paragraph{Hyperbolic space}

When given the metric induced by the Minkowski inner product,
the set
\begin{equation}
  \label{eq:hn-def}
  \mathbb{H}^n \bydef \{ u \in \R^{n,1}
  \mid u \cdot u = -1, z >0\}
\end{equation}
is a model for $n$-dimensional hyperbolic space.  Note that
\eqref{eq:hn-def} implies $z\geq 1$. For $u,v\in\HH^{n}$,
$-u\cdot v = \cosh (d(u,v))$, where $d(u,v)$ is the geodesic distance
from $u$ to $v$. In particular, $-u\cdot v \geq 1$. For details on why
this is indeed hyperbolic space see, e.g.~\cite{MR1491098}.

$\HH^n$ is the orbit under $SO^+(n,1)$ of the point
 $u_0 = (1,0,\dots,0)$, and the stabiliser of $u_0$ is the subgroup
$SO(n)$.  Thus $\HH^{n}$ can be identified with $SO^{+}(n,1)/ SO(n)$.
It is parameterised by $(u^{1},\dots, u^{n}) \in \R^n$:
\begin{equation}
  \label{eq:hyp-coord}
  \HH^n = \{ u\in \R^{n,1}
  \mid (u^{1},\dots, u^{n})\in\R^{n},
  z=\sqrt{1+(u^{1})^{2} + \dots (u^{n})^{2}}\}.
\end{equation}
In these coordinates, the $SO^+(n,1)$-invariant Haar measure on
$\HH^n$ can be written as
\begin{equation}
  \label{eq:hn-meas}
  du = \frac{du^{1}\dots du^{n}}{z(u)},
  \quad z(u) \bydef \sqrt{1+(u^1)^2 + \cdots + (u^n)^2}.
\end{equation}

Note that the Lorentz boost \eqref{eq:boost} maps $\HH^{n}$ to
$\HH^{n}$, and that in the parameterization of $\HH^{n}$ by
$(u^1, \dots, u^n)$, the infinitesimal Lorentz boost in the $xz$-plane
is given by
\begin{equation}
  \label{eq:gen-hyp-1}
  T \bydef z\ddp{}{x}.
\end{equation}
This is because $T$ satisfies the defining
equations~\eqref{eq:boost-inf}:
$Tz=x$, $Tx = z$, and $Tu^{\alpha}=0$ for $\alpha\geq 2$.
In the last calculation we have used the
definition \eqref{eq:hyp-coord} of $z(u)$.  The invariance of the
measure $du$ under Lorentz boosts implies that for differentiable
$f\colon\HH^{n}\to \R$ with sufficient decay,
\begin{equation}
  \label{eq:IBP-hyp}
  \int_{\HH^{n}}Tf\,du = 0. 
\end{equation}

\subsection{Hyperbolic sigma model}

Hyperbolic spin models are analogues of the Gaussian free field
defined in terms of the Minkowski inner product instead of the
Euclidean inner product. While it is possible to define a spin model
associated to the entire causal future of Minkowski space, see
Figure~\ref{fig:minkowski}, for now we restrict ourselves to the
\emph{sigma model} version of this model in which spins are constrained
to lie in $\HH^n$.  We will later consider (the
supersymmetric version of) a spin model taking values in the
causal future in Section~\ref{sec:HK}.

In the $\HH^{n}$ sigma model there is a spin $u_i\in \HH^n$ for each
$i \in\Lambda$. We again let $\beta$ be a non-negative
collection of edge weights and $\V{h}\geq \V{0}$ be a collection of
non-negative vertex weights. For a spin configuration $\V{u}$ we consider the
energy
\begin{equation}
  \label{eq:H-Hn}
  H_{\beta}(\V{u}) \bydef \frac{1}{2} (\V{u},-\Delta_\beta \V{u})
  = \frac{1}{4}\sum_{i,j\in\Lambda}\beta_{ij} (u_{i}-u_{j})^{2},
  \quad
  H_{\beta,h}(\V{u}) = H_{\beta}(\V{u}) + (\V{h},\V{z}-\V{1}),
\end{equation}
analogous to \eqref{eq:GFF-H}, except that the inner product in
$(u_{i}-u_{j})^{2} = (u_{i}-u_{j})\cdot (u_{i}-u_{j})$ is now given by
the Minkowski inner product.
The mass term has also been replaced by the term $(\V{h},\V{z}-\V{1})$
since $z_{i}\geq 1$ for all $i$.

Note that $H_{\beta}(\V{u})$ is invariant under the diagonal action of
$SO^+(n,1)$, analogous to the invariance of \eqref{eq:GFF-H} by the
Euclidean group.  Moreover, since $u_i\cdot u_i=-1$, we have
$(u_{i}-u_{j})^{2} = -2-2u_{i}\cdot u_{j}$, we can thus rewrite
$H_{\beta}(\V{u})$ in terms of
$\tilde u \bydef (u^{1},\dots, u^{n})\in\R^{n}$ as
\begin{equation}
  \label{eq:H-Hn1}
  H_{\beta}(\V{u}) = -\frac{1}{2}
  \sum_{i,j\in\Lambda}\beta_{ij}\left(\sum_{\alpha=1}^{n}u_{i}^{\alpha}u_{j}^{\alpha}-z_{i}z_{j}+1\right),
\end{equation}
where we recall that $z_{i}=z_{i}(\tilde u_{i})$ is given by
\eqref{eq:hyp-coord}. Define an unnormalised expectation
$\cb{\cdot}_{\beta,h}$ on functions $F\colon \HH^{n\Lambda}\to \R$ by
\begin{equation}
  \label{eq:unex-hn}
  \cb{F}_{\beta,h} \bydef
  \int_{\HH^{n\Lambda}}F(\V{u})e^{-H_{\beta,h}(\V{u})}\,d\V{u} =
  \int_{\R^{n\Lambda}} F(\V{u})e^{-H_{\beta,h}(\V{u})}\,
  \prod_{i\in\Lambda}\frac{d\tilde u_{i}}{z(\tilde u_{i})},
\end{equation}
where $d\V{u}$ is the $\Lambda$-fold product of the invariant measure on
$\HH^{n}$. In the second equality we have written this integral using
the parametrization by $\R^{n}$ in~\eqref{eq:hn-meas}.  When $\V{h}=\V{0}$ we
set $\cb{\cdot}_{\beta}\bydef \cb{\cdot}_{\beta,h}$.

The \emph{$\HH^{n}$-model} is the probability measure on
$\HH^{n\Lambda}$ defined by the normalised expectation
\begin{equation}
  \label{eq:Hn-model}
  \avg{F}_{\beta,h} \bydef \frac{1}{Z_{\beta,h}}\cb{F}_{\beta,h},
  \qquad Z_{\beta,h} \bydef \cb{1}_{\beta,h}.
\end{equation}
Note that for \eqref{eq:Hn-model} to be well-defined we must have
$Z_{\beta,h}<\infty$.  This is the case if and only if $h_{i}>0$ for
some $i$ due to the invariance of $H_{\beta}(\V{u})$ under the
non-compact boost symmetry of $\HH^{n}$.

\begin{remark}
  \label{rem:Hn-history}
  This model was studied in~\cite{MR2104878} as a toy model
  for some aspects of random band matrices. See
  Remark~\ref{rem:H22-history} below for further details on this
  connection.
\end{remark}

\subsection{Fundamental integration by parts identity}

The statement of the following lemma is formally identical to that of
Lemma~\ref{lem:gen-SRW}.  However, the objects in its statement are
now hyperbolic versions: $\cL$ is the generator of the VRJP,
$[\cdot]_\beta$ is the unnormalised expectation from
\eqref{eq:unex-hn}, $T_j$ is the infinitesimal Lorentz boost in the
$xz$-plane in the $j$th coordinate specified by \eqref{eq:boost-inf},
and $\frac12 |\V{u}|^2$ is replaced by $\V{z}$.

\begin{lemma}
  \label{lem:gen-hyp}
  Let $[\cdot]_\beta$ be the unnormalised expectation of the $\HH^n$
  model, and let $\E_{i,\V{\ell}}$ be the expectation of the VRJP.
  Let $f\colon \Lambda\times \R^\Lambda \to \R$ be a smooth function
  with rapid decay, and let $\rho\colon \HH^{n\Lambda}\to \R$ be
  smooth with moderate growth.  Then:
  \begin{equation}
    \label{eq:gen-hyp}
    -\sum_{j\in\Lambda}\cb{\rho(\V{u})x_{j} \cL f(j,\V{z})}_{\beta}
    =
    \sum_{j\in\Lambda} \cb{(T_j\rho)(\V{u})f(j,\V{z})}_{\beta}
    .
  \end{equation}
  In particular, the following  integrated version holds for all
  $f\colon\Lambda\times\R^{\Lambda}\to\R$ with rapid decay:
  \begin{equation}
    \label{eq:generic-iso-hyp}
    \sum_{j\in\Lambda}\uspin{\rho(\V{u})x_{j} f(j, \V{z})}_{\beta}
    =
    \sum_{j\in\Lambda} \uspin{(T_j\rho)(\V{u})\int_0^\infty \E_{j,\V{z}}(f(X_t,\V{L}_t))\,dt}_{\beta}
    .
  \end{equation}
\end{lemma}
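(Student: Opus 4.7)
The strategy mirrors the proof of Lemma~\ref{lem:gen-SRW}, with Lebesgue measure on $\R^{n\Lambda}$ replaced by the $SO^+(n,1)$-invariant measure on $\HH^{n\Lambda}$, and the translation generator $T_j = \partial/\partial u_j^1$ replaced by the single-site Lorentz boost generator $T_j = z_j \partial/\partial x_j$ from \eqref{eq:gen-hyp-1}. First I would establish the integration by parts identity
\begin{equation*}
  [(T_j f_1) f_2]_\beta = [f_1 (T_j^* f_2)]_\beta, \qquad T_j^* f \bydef -T_j f + (T_j H_\beta) f,
\end{equation*}
which follows from the invariance of the measure $d\V{u}$ on $\HH^{n\Lambda}$ under the single-site boost $\theta_s$ at site $j$, i.e., from the pointwise vanishing $\int_{\HH^{n\Lambda}} T_j F \, d\V{u} = 0$ for $F$ with sufficient decay, a direct application of \eqref{eq:IBP-hyp}.

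The next step is to compute $T_j H_\beta$ using the representation \eqref{eq:H-Hn1} of the energy. Since $T_j$ acts only on the $j$th spin and satisfies $T_j z_j = x_j$, $T_j x_j = z_j$, and $T_j u_j^\alpha = 0$ for $\alpha \geq 2$, while $T_j z_k = T_j x_k = 0$ for $k \neq j$, the symmetry of $\beta$ yields
\begin{equation*}
  T_j H_\beta(\V{u}) = \sum_{k\in\Lambda} \beta_{jk} (x_j z_k - z_j x_k).
\end{equation*}
Combined with the observation that $T_j f(j, \V{z}) = x_j \partial_{\ell_j} f(j, \V{z})$, summing over $j$ and relabelling one of the cross terms via $\beta_{jk}=\beta_{kj}$, I would obtain
\begin{equation*}
  -\sum_{j\in\Lambda} T_j^* f(j, \V{z}) = \sum_{j\in\Lambda} x_j \left[ \partial_{\ell_j} f(j, \V{z}) + \sum_{k\in\Lambda} \beta_{jk} z_k \bigl( f(k, \V{z}) - f(j, \V{z}) \bigr) \right] = \sum_{j\in\Lambda} x_j (\cL f)(j, \V{z}),
\end{equation*}
where $\cL$ is the VRJP generator \eqref{eq:VRJP-generator} evaluated at $\V{\ell} = \V{z}$. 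Multiplying by $\rho$, integrating against $[\cdot]_\beta$, and applying the integration by parts identity to move $T_j^*$ back onto $\rho$ as $T_j$ then yields \eqref{eq:gen-hyp}.

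For \eqref{eq:generic-iso-hyp}, I would (after a standard approximation to reduce to smooth $f$ with rapid decay) substitute $f_t(i, \V{\ell}) \bydef \E_{i, \V{\ell}}(f(X_t, \V{L}_t))$ into \eqref{eq:gen-hyp}, noting that $f_t$ is smooth by the remark following \eqref{eq:VRJP-generator}. Using Kolmogorov's backward equation $\cL f_t = \partial_t f_t$ to rewrite the left-hand side as a time derivative and then integrating over $t \in (0,\infty)$ produces \eqref{eq:generic-iso-hyp}, provided the boundary term at $t = \infty$ vanishes. This follows from the cited fact that $L_t^j \to \infty$ in probability for the VRJP (see the discussion after \eqref{eq:VRJP}), combined with dominated convergence, the rapid decay of $f$, and the moderate growth of $\rho$.

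The main obstacle is the algebraic step identifying $-\sum_j T_j^* f(j, \V{z})$ with $\sum_j x_j (\cL f)(j, \V{z})$: it is only after reorganising the antisymmetric Minkowski cross-terms $x_j z_k - z_j x_k$ using the symmetry of $\beta$ that the non-Markovian local-time weighting $\ell_k = z_k$ in the VRJP rates \eqref{eq:VRJP} emerges. This is the single point at which the switch from translation to boost symmetry is felt, and it is precisely what converts the Markovian Euclidean generator $\Delta_\beta + \partial$ into the self-interacting VRJP generator.
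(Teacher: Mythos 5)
Your proposal is correct and follows essentially the same route as the paper's proof: the integration by parts identity from the boost invariance \eqref{eq:IBP-hyp}, the computation $T_j H_\beta = \sum_k \beta_{jk}(x_j z_k - z_j x_k)$, the relabelling via $\beta_{jk}=\beta_{kj}$ to recover the VRJP generator, and the passage to the integrated version via Kolmogorov's backward equation and the vanishing boundary term (using $L_t^j \to \infty$ in probability and dominated convergence). The only cosmetic difference is that you frame the Ward identity as arising from single-site boost invariance rather than citing \eqref{eq:IBP-hyp} directly, but the content is identical.
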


\begin{proof}
  The proof is again by integration by parts and closely follows that of
  Lemma~\ref{lem:gen-SRW}.
  Indeed, using that $[\cdot]_\beta$ has density $e^{-H_\beta}$ with respect to the Lorentz invariant measure on $\HH^{n\Lambda}$,
  the identity~\eqref{eq:IBP-hyp} implies that
  for $f_{1},f_{2}\colon \HH^{n\Lambda}\to \R$ smooth and with sufficient decay,
  \begin{equation}
    \label{eq:gen-hyp-4}
    \cb{(T_{i}f_{1})f_{2}}_{\beta} = \cb{f_{1}(T^{\star}_{i}f_{2})}_{\beta},
  \end{equation}
  where
  \begin{equation}
    \label{eq:gen-hyp-5}
    T^{\star}_{i}f(\V{u}) = -T_{i}f(\V{u}) + (T_{i}H_{\beta}(\V{u})) f(\V{u}).
  \end{equation}
  Using \eqref{eq:H-Hn1} and \eqref{eq:boost-inf} yields
  \begin{equation}
    \label{eq:gen-hyp-6}
    T_{i}H_{\beta}(\V{u}) 
    = -\frac{1}{2}\sum_{j,k\in\Lambda}\beta_{jk} T_{i}( x_{j}x_{k} - z_{j}z_{k}) 
    = \sum_{j\in\Lambda}\beta_{ij}(x_{i}z_{j} - x_{j}z_{i})
  \end{equation}
  and hence, using \eqref{eq:boost-inf} and the chain rule to compute
  $T_{i}f$,
  \begin{equation}
    \label{eq:gen-hyp-8}
    -T_{i}^{\star}f(\V{z})  =
    \sum_{j\in\Lambda}\beta_{ij}(x_{j}z_{i}-x_{i}z_{j})f(\V{z}) + x_{i}\ddp{f(\V{z})}{\ell_{i}}.
  \end{equation}
  Applying \eqref{eq:gen-hyp-8} to each function $f(i,\V{z})$ and
  summing over $i$ yields
  \begin{equation}
    \label{eq:gen-hyp-9}
    -\sum_{i\in\Lambda}T_{i}^{\star}f(i,\V{z}) 
    =
    \sum_{i\in\Lambda}x_{i}\pb{\sum_{j\in\Lambda}\beta_{ij}z_{j}(
      f(j,\V{z})-f(i,\V{z})) + \ddp{f(i,\V{z})}{\ell_{i}}} 
      = \sum_{i\in\Lambda}x_{i}(\cL f)(i,\V{z})
  \end{equation}
  by the formula \eqref{eq:VRJP-generator} for $\cL$. The remainder of
  the proof follows the proof of Lemma~\ref{lem:gen-SRW}.
\end{proof}

\subsection{Hyperbolic isomorphism theorems}

The following theorems are analogues of the BFS--Dynkin, Ray--Knight,
and Eisenbaum isomorphism theorems.  Their proofs are analogous to
those in Section~\ref{sec:euclidean}, using Lemma~\ref{lem:gen-hyp} in
place of Lemma~\ref{lem:gen-SRW}, and using hyperbolic versions of
$\rho$ and $f$.  We begin with the hyperbolic version of the
BFS--Dynkin isomorphism, i.e., Theorem~\ref{thm:eucl-dynkin}. It first
appeared in~\cite{1802.02077} and was proven there using horospherical
coordinates.  Here we give a more intrinsic proof that avoids
horospherical coordinates.

\begin{theorem}
  \label{thm:BHS}
  Let $[\cdot]_\beta$ be the unnormalised expectation of the $\HH^n$
  model, and let $\E_{i,\V{\ell}}$ be the expectation of the VRJP.
  Let $g \colon \R^{\Lambda}\to \R$ have rapid decay, and let
  $a, b\in\Lambda$. Then
  \begin{equation}
    \label{eq:BHS}
    \uspin{x_{a}x_{b}
      g(\V{z})}_{\beta} =
    \uspin{z_{a}\int_{0}^{\infty} \E_{a,\V{z}}(g(\V{L}_{t})1_{X_t = b})\,dt}_{\beta}.
  \end{equation}
\end{theorem}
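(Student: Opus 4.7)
The plan is to obtain Theorem~\ref{thm:BHS} as an immediate corollary of Lemma~\ref{lem:gen-hyp}, in direct analogy with how Theorem~\ref{thm:eucl-dynkin} follows from Lemma~\ref{lem:gen-SRW}. All the heavy lifting (integration by parts, Kolmogorov's backward equation, dominated convergence to remove the boundary term at $t=\infty$) has already been done inside the proof of Lemma~\ref{lem:gen-hyp}, so what remains is to pick the right $\rho$ and $f$ and compute $T_j \rho$.

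Concretely, I would apply the integrated identity \eqref{eq:generic-iso-hyp} with
\[
\rho(\V{u}) \bydef x_{a}, \qquad f(j,\V{\ell}) \bydef g(\V{\ell})\, 1_{j=b}.
\]
The function $\rho$ is smooth with at most linear (hence moderate) growth on $\HH^{n\Lambda}$, and $f$ inherits rapid decay in $\V{\ell}$ from the hypothesis on $g$, so the lemma applies. The key computation is the action of the infinitesimal boosts $T_j$ on $\rho$: since $T_j$ acts only on the $j$th coordinate, and since \eqref{eq:boost-inf} gives $T_j x_k = \delta_{jk}\, z_j$, we obtain
\[
(T_j \rho)(\V{u}) = T_j x_a = \delta_{ja}\, z_a.
\]
Substituting into \eqref{eq:generic-iso-hyp}, the left-hand side collapses to $\uspin{x_a x_b\, g(\V{z})}_\beta$ because only the $j=b$ term survives, while the right-hand side collapses to $\uspin{z_a \int_0^\infty \E_{a,\V{z}}(g(\V{L}_t) 1_{X_t=b})\, dt}_\beta$ because only the $j=a$ term survives. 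This matches \eqref{eq:BHS} exactly.

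Thus the proof is essentially a one-line verification. There is no real obstacle; the only thing to be careful about is the bookkeeping for the two independent sums over $j \in \Lambda$ (one from the pairing $\rho\, x_j$ on the left-hand side of \eqref{eq:generic-iso-hyp}, and one from $T_j \rho$ on the right-hand side), and the observation that the ``$z_a$'' on the right of \eqref{eq:BHS} is produced precisely by the fact that the generator of the Lorentz boost sends $x_a$ to $z_a$ rather than to a constant, which is the hyperbolic replacement for the Euclidean identity $T_j u_a^1 = \delta_{ja}$.
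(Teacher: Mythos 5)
Your proposal is correct and matches the paper's proof exactly: the paper likewise applies Lemma~\ref{lem:gen-hyp} with $\rho(\V{u})=x_a$, $f(j,\V{\ell})=g(\V{\ell})1_{j=b}$, and uses $T_j\rho(\V{u})=1_{j=a}z_j$ (equivalent to your $\delta_{ja}z_a$). Your additional explanatory remarks about the role of $T_j x_a = \delta_{ja} z_a$ in producing the $z_a$ factor are accurate and consistent with the paper.
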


\begin{proof}
  Apply Lemma~\ref{lem:gen-hyp} with $\rho(\V{u}) = x_a$,
  $f(j,\V{\ell}) = g(\V{\ell})1_{j=b}$, and use
  $T_j\rho(\V{u}) = 1_{j=a}z_j$.
\end{proof}

The next theorem is a hyperbolic version of the Ray--Knight
isomorphism, i.e., Theorem~\ref{thm:eucl-rk}.  Recall the definition
of a boost $\theta_{s}$ by $s\in\R$ in the $xz$-plane from
\eqref{eq:boost}. In what follows we let $\theta_{s}$ act diagonally
on $\V{u}\in \HH^{n\Lambda}$, and we write $\theta_{s}\V{z}$ to
denote the first component of $\theta_{s}\V{u}$.
We also write $\cb{f\delta_{u_{0}}(u_{a})}_{\beta}$ for the expectation of the
spin model in which the spin $u_{a}$ is fixed at $u_{0}\in\HH^{n}$.

\begin{theorem}
  \label{thm:hyp-rk}
  Let $[\cdot]_\beta$ be the unnormalised expectation of the $\HH^n$ model, and let $\E_{i,\V{\ell}}$ be the expectation of the VRJP.
  Let $g\colon\R^{\Lambda}\to\R$ be a smooth compactly supported function,
  let $a\in\Lambda$, and let $s\in\R$.  Then
  \begin{equation}
    \label{eq:hyp-rk}
    \uspin{ g(\theta_s\bm{z})\delta_{ u_0}(u_a)}_\beta = \uspin{  \E_{a,\V{z}}g(\bm{L}_{\tau(\cosh{s})})\delta_{u_0}(u_a)}_\beta
  \end{equation}
  where $\tau(\gamma) = \inf \{t \,| \,L_a^t \ge \gamma\}$ and
  $u_0=(1,0,\dots,0) \in \HH^n$.
\end{theorem}

\begin{proof}[ Proof of Theorem~\ref{thm:hyp-rk}]
  Since the identity is trivial if $s=0$, assume $s\neq 0$.  We begin
  by applying Lemma~\ref{lem:gen-hyp} with
  $\rho_{\epsilon}(\V{u}) = \rho_{\epsilon}(u_{a})$,
  $f(j,\V{\ell}) = g(\V{\ell})\eta_{\epsilon}(\ell_a)1_{j=a}$,
  with the functions $\rho_{\epsilon} \colon \HH^n \to \R$ and
  $\eta_{\epsilon} \colon\R\to\R$ chosen such that $T_{a}\rho_{\epsilon}$
  and $\eta_{\epsilon}$ are smooth compactly supported approximations to
  $\delta_{u_{0}}(u_a)-\delta_{\theta_{s}{u_{0}}}(u_a)$ and
  $\delta_{\cosh{s}}(\ell_a)$ subject to $\rho_{\epsilon}(u_a)\eta_{\epsilon}(z_a)=0$ for
  all $u_a\in \HH^n$.  Since $s\neq 0$, these conditions can be shown
  to be satisfiable by explicit construction. Exactly as in the proof
  of Theorem~\ref{thm:eucl-rk} this yields
  \begin{equation}
    \label{eq:RKhyp1a}
    \uspin{T_{a}\rho_{\epsilon}(u_{a})\int_{0}^{\infty}\E_{a,\V{z}}(g(\V{L}_{t})\eta_{\epsilon}(L_{t}^{a}){1_{X_t=a}})\,dt}_{\beta}=0,
  \end{equation}
  i.e.,
  \begin{equation}
 \label{eq:RKhyp1}
 \uspin{\delta_{\theta_{s-\varepsilon} u_0,\varepsilon}(u_0)\int_{0}^{\infty}\E_{a,\V{z}}(g(\V{L}_{t})\eta_{\epsilon}(L_{t}^{a}){ 1_{X_t=a}}\,dt}_{\beta}= \uspin{\delta_{u_0,\varepsilon}(u_0)\int_{0}^{\infty}\E_{a,\V{z}}(g(\V{L}_{t})\eta_{\epsilon}(L_{t}^{a}){ 1_{X_t=a}})\,dt}_{\beta}.
\end{equation}

As in \eqref{eq:rklimpre}, by the continuity\footnote{ Continuity
  can be proven by an argument similar to the one we gave for simple
  random walk near~\eqref{eq:rklimpre}: after restricting to times at
  most $T$ using compact support, the claim follow from the fact that
  $\P(J_\delta) = O_T(\delta)$ since the jump rates up to time $T$ are bounded
  by $O(T)$.} of $s\mapsto \E_{a,\V{\ell}} g(\V{L}_{\tau(\cosh s)})$
and the definition of $\eta_\epsilon$,
\begin{equation} \label{eq:RKhyp-delta}
  \lim_{\epsilon\to 0} 
  \E_{a,\V{\ell}}\int_{0}^{\infty}  g(\V{L}_{t})
  \eta_{\epsilon}(L^{a}_{t})1_{X_{t}=a} \, dt 
  =
  \lim_{\epsilon\to 0} 
  \int_{0}^{\infty} \E_{a,\V{\ell}}( g(\V{L}_{\tau(\gamma)})
  \eta_{\epsilon}(\gamma) \, d\gamma
  = \E_{a,\V{\ell}}g(\V{L}_{\tau(\cosh s)}),
\end{equation}
uniformly in $\V{\ell}$ with $\ell_a \leq \cosh s$.

To conclude, we use \eqref{eq:RKhyp-delta} to take $\epsilon\to 0$
in \eqref{eq:RKhyp1}. More precisely, we use that
$\delta_{\theta_{s}u_{0}}$ concentrates the $u_a$ integral at
$z_{a}=\cosh s$ on the left-hand side, and hence the time integral at $t = 0$.
By the boost invariance of $\uspin{\cdot}_{\beta}$, 
this term produces  the left-hand side of \eqref{eq:hyp-rk}: 
  \begin{equation}
    \uspin{\delta_{\theta_su_0}(u_a)\E_{a,\V{z}}(g(\V{L}_{\tau(\cosh s)}))}_{\beta}
    =
    \uspin{\delta_{u_0}(u_a) \E_{a,\theta_s\V{z}}(g(\V{L}_{\tau(\cosh s)}))}_{\beta}
    =
    \uspin{\delta_{u_0}(u_a) g(\theta_{s}\V{z})}_{\beta} 
    .
  \end{equation}
  Again by \eqref{eq:RKhyp-delta}, the
  $\delta_{u_{0}}$ on the right-hand side of \eqref{eq:RKhyp1} concentrates the time integral at
  $\tau(\cosh s)$, which gives the right-hand side of \eqref{eq:hyp-rk}.
\end{proof}	

Finally, we prove a hyperbolic version of the Eisenbaum isomorphism
theorem, i.e., Theorem~\ref{thm:eucl-eis}. This concerns a killed
VRJP. The generator of this killed process
$(X_{t},\V{L}_{t})_{t\geq 0}$ acts on smooth functions
$f\colon \Lambda\times\R^{\Lambda}\to \R$ as
\begin{equation}
\label{eq:VRJP-gen-k}
(\cL^{h} f)(i,\V{\ell})
\bydef \cL
f(i,\V{\ell}) - h_{i}f(i,\V{\ell}) 
,\quad \text{i.e.,}\quad
\cL^h = \cL - \V{h},
\end{equation}
where $\cL$ is now the generator of the VRJP and $h_i$ are the killing
rates. We let $\E^{h}_{i,\V{\ell}}$ denote the corresponding deficient
expectation.  As for the SRW, the killing does not depend on the initial
local times, i.e.,
\begin{equation}
  \label{eq:VRJP-gen-k-1}
  \E^{h}_{i,\V{\ell}} \pb{g(X_{t},\V{L}_{t})} 
  = 
  \E_{i,\V{\ell}}\pb{g(X_{t},\V{L}_{t})e^{-\sum_{j\in\Lambda}h_{j}(L^{j}_{t}-\ell_j)}},
\end{equation}
and we can thus write
\begin{equation}
  \label{eq:VRJP-nokill}
  \E_{i,\V{\ell}}(g(X_t,\V{L}_t)e^{-\sum_{j\in\Lambda} h_j(L_t^j-1)})
  = \E_{i,\V{\ell}}^h(g(X_t,\V{L}_t))e^{-\sum_{j\in\Lambda} h_j (\ell_j-1)}
  = \E_{i,\V{\ell}}^h(g(X_t,\V{L}_t))e^{-(\V{h}, \V{\ell}-\V{1})}
  .
\end{equation}

\begin{theorem}
  \label{thm:EIS-VRJP}
  Let $\spin{\cdot}_{\beta,h}$ be the expectation of the $\HH^n$
  model, and let $\E_{i,\V{\ell}}^h$ be the expectation of the killed
  VRJP with $\V{h} \neq \V{0}$.  Let
  $g\colon\Lambda\times\R^{\Lambda}\to \R$ be of moderate growth, and
  let $s\in\R$. Then
  \begin{equation}
    \label{eq:EIS-VRJP-g}
    \sum_{i\in\Lambda}\spin{(\theta_{s}x_{i})
      g(i,\theta_{s}\V{z})}_{\beta,h} =
    \sum_{i\in\Lambda}h_{i}\spin{(\theta_{s}x_{i}-x_{i})
      \int_{0}^{\infty} \E_{i,\theta_{s}\V{z}}^{h}
      (g(X_{t},\V{L}_{t})
      )\,dt}_{\beta,h}.
  \end{equation}
\end{theorem}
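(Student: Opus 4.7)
The plan is to mirror the proof of Theorem~\ref{thm:eucl-eis}, applying the fundamental integration by parts identity Lemma~\ref{lem:gen-hyp} with $\rho$ and $f$ tailored to the hyperbolic geometry and the killed VRJP.

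I would choose
\[
  \rho(\V{u}) \bydef e^{-(\V{h},\,\theta_{-s}\V{z} - \V{z})},
  \qquad
  f(j,\V{\ell}) \bydef g(j,\V{\ell})\, e^{-(\V{h},\V{\ell}-\V{1})},
\]
so that the product $\rho(\V{u})\,e^{-(\V{h},\V{z}-\V{1})}$ collapses to $e^{-(\V{h},\theta_{-s}\V{z}-\V{1})}$. Using the boost generator $T_j = z_j \partial_{x_j}$, the relation $\partial_{x_j} z_j = x_j/z_j$, and the explicit formula \eqref{eq:boost} for $\theta_{-s}$, a direct computation gives the key identity
\[
  T_j \rho \;=\; -\,h_j\,(\theta_{-s} x_j - x_j)\,\rho,
\]
which plays the role of $T_j\rho = sh_j\rho$ in the Euclidean case.

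Next, I would apply \eqref{eq:generic-iso-hyp} and use \eqref{eq:VRJP-nokill} to rewrite $\E_{j,\V{z}}(g(X_t,\V{L}_t) e^{-(\V{h},\V{L}_t-\V{1})})$ as $e^{-(\V{h},\V{z}-\V{1})}\E^h_{j,\V{z}}(g(X_t,\V{L}_t))$; combined with $\rho$, the boosted weight $e^{-(\V{h},\theta_{-s}\V{z}-\V{1})}$ appears in both terms of the resulting identity. I would then eliminate this boosted weight by the change of variables $\V{u} \mapsto \theta_s \V{u}$ under $[\cdot]_\beta$, which is legitimate because both the Haar measure on $\HH^n$ and the energy $H_\beta$ are boost-invariant. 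This change sends $\theta_{-s}\V{z} \mapsto \V{z}$ (clearing the boosted weight), $x_j \mapsto \theta_s x_j$, the initial local time $\V{z} \mapsto \theta_s\V{z}$ inside the killed-VRJP expectation, and $\theta_{-s} x_j - x_j \mapsto -(\theta_s x_j - x_j)$. Dividing by $Z_{\beta,h}$ and tracking signs yields precisely \eqref{eq:EIS-VRJP-g}.

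The main obstacle is the technical point already flagged in the Euclidean case: $\rho$ is not of moderate growth in the sense of Section~\ref{sec:NC}, because the boost mixes $x_j$ and $z_j$ so that $\rho$ grows exponentially in one half of the $(x_j, u_j^2,\dots,u_j^n)$-parameterization. However, the decay provided by $f$ is more than sufficient: the effective integrand carries the factor $e^{-(\V{h},\theta_{-s}\V{z}-\V{1})}$, and the bound
\[
  \theta_{-s} z_j \;=\; -x_j\sinh s + z_j\cosh s \;\geq\; (\cosh s - |\sinh s|)\,z_j \;=\; e^{-|s|}\,z_j,
\]
obtained from $z_j \geq |x_j|$ on $\HH^n$, shows that this combined weight decays exponentially in $z_j$ uniformly in the angular direction. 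This is enough to justify the integration by parts in Lemma~\ref{lem:gen-hyp} in this setting, exactly as in the Euclidean proof of Theorem~\ref{thm:eucl-eis}.
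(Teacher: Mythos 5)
Your proof is correct and follows essentially the same route as the paper: the same $\rho(\V{u}) = e^{(\V{h},\V{z}-\theta_{-s}\V{z})}$, the same identity $T_j\rho = h_j(x_j - \theta_{-s}x_j)\rho$, the same use of \eqref{eq:VRJP-nokill} and of boost invariance. The two cosmetic differences are that you build the $\Lambda$-dependence of $g$ directly into $f$ rather than summing over $a$ with a $1_{j=a}$ factor, and that you supply an explicit quantitative bound ($\theta_{-s}z_j \ge e^{-|s|}z_j$, from $z_j \ge |x_j|$ on $\HH^n$) showing the combined weight $\rho(\V{u})\,e^{-(\V{h},\V{z}-\V{1})} = e^{-(\V{h},\theta_{-s}\V{z}-\V{1})}$ still decays exponentially. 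The paper flags the moderate-growth caveat in the Euclidean Eisenbaum proof but does not spell it out in the hyperbolic case, so this addition is a genuine clarification.
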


\begin{proof}
  Analogously to the proof of Theorem~\ref{thm:eucl-eis}, we apply
  Lemma~\ref{lem:gen-hyp} with
  \begin{align}
      \rho(\V{u}) 
      &\bydef e^{(\V{h},\V{z}-\theta_{-s}\V{z})} = e^{(\V{h},\V{z}-\V{1})}(e^{-(\V{h},\theta_{-s}\V{z}-\V{1})})\\
      f(j,\V{\ell}) &\bydef g(\V{\ell})e^{-(\V{h},\V{\ell}-\V{1})} 1_{j=a},
  \end{align}
  and use that $(T_j\rho)(\V{u}) = h_j(x_j - \theta_{-s}x_j) \rho(\V{u})$ to obtain
  \begin{multline}
    \label{eq:hyp-eis-1}
    \sum_{j\in\Lambda} h_j\uspin{(x_j - \theta_{-s}x_j) \rho(\V{u})\int_0^\infty
    \E_{j,\V{z}}(
    g(\V{L}_t)1_{X_{t}=a}e^{-(\V{h},\V{L_t}-\V{1})})\,dt}_{\beta}
    \\
    =
      \sum_{j\in\Lambda}\uspin{\rho(\V{u})x_{j} 
    g(\V{z})1_{j=a}e^{-(\V{h},\V{z}-\V{1})}}_{\beta}
    = 
      \uspin{x_{a}^{1} 
      g(\V{z})e^{-(\V{h},\theta_{-s}\V{z}-\V{1})}}_{\beta}.
  \end{multline}
  Using \eqref{eq:VRJP-nokill} to substitute
  \begin{equation}
    \rho(\V{u})\E_{j,\V{z}}(
    g(\V{L}_t)1_{X_{t}=a}e^{-(\V{h},\V{L}_t-\V{1})}) 
    = \E_{j,\V{z}}^h(
    g(\V{L}_t)1_{X_{t}=a})e^{-(\V{h},\theta_{-s}\V{z}-\V{1})},
  \end{equation}
  and the boost invariance of the spin expectation
  $[\theta_s\cdot]_\beta = [\cdot]_\beta$, we can rewrite
  \eqref{eq:hyp-eis-1} as
  \begin{equation}
    \sum_{j\in\Lambda}h_j\uspin{(\theta_sx_j - x_j) \int_0^\infty
      \E_{j,\theta_s\V{z}}^h(g(\V{L}_t)1_{X_{t}=a})\,dt}_{\beta,h} 
    = 
    \uspin{(\theta_s x_a) g(\theta_s \V{z})}_{\beta,h}
  \end{equation}
  where we have absorbed the magnetic
  terms $e^{-(\V{h},\V{z}-\V{1})}$ into the measures.
  Normalising gives \eqref{eq:EIS-VRJP-g}. 
\end{proof}

\section{Isomorphism theorems for spherical geometry}
\label{sec:spherical}

In this section we describe analogues of the theorems of
Sections~\ref{sec:euclidean} and~\ref{sec:hyperbolic} for spherical
geometry.

\begin{figure}
\begin{center}
  \input{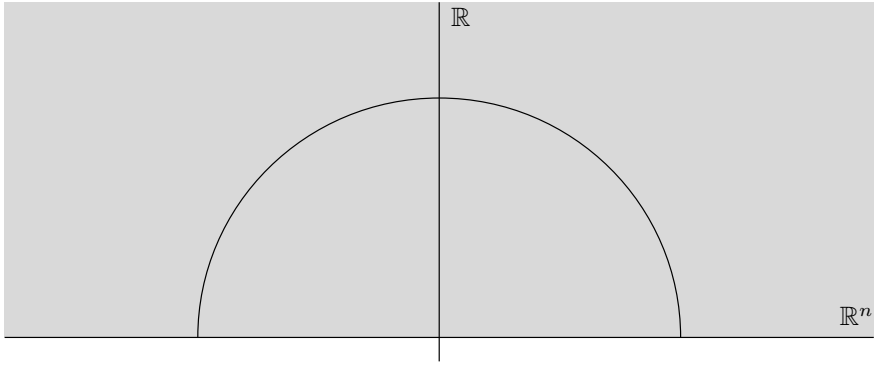}
\end{center}
\caption{The upper half-plane in Euclidean space
  $\R^{n+1}$ (shaded) and the upper hemisphere $\bbS^{n}_{+}$.}
\label{fig:spherical}
\end{figure}

\subsection{The vertex-diminished jump process}
\label{sec:vert-dimin-jump}

The \emph{vertex-diminished jump process} (VDJP)
$(X_{t},\V{L}_{t})$ with initial conditions
$(v,\V{L}_{0})\in\Lambda\times (0,1]^{\Lambda}$
is the Markov process with conditional jump rates
\begin{equation}
  \label{eq:VDJP}
  \P_{v,\V{L}_{0}}\cb{X_{t+dt}=j \mid (X_{s})_{s\leq t}, X_{t}=i} = \beta_{ij}L^{j}_{t}\,dt
\end{equation}
that is stopped at the time
$\death \bydef \inf \{s \mid \text{exists $j\in\Lambda$ s.t.\
  $L^{j}_{s}\leq 0$}\}$.  Here $\V{L}_{t}$ is the collection of local
times of $X_{t}$ defined by
\begin{equation}
  \label{eq:local-VDJP}
  L^{j}_{t} \bydef L^{j}_{0} - \int_{0}^{t}1_{X_{s}=j}\,ds,
\end{equation}
and $L^{j}_{0}>0$ is the \emph{initial local time} at $j$. It is
straightforward to see that $(X_{t},\V{L}_{t})$ is well-defined up to
$\death$ by a step-by-step construction as is done for the VRJP
in~\cite{MR2027294}.
Note that \eqref{eq:RW-models} with $\epsilon=-1$ describes the VDJP with
$\V{L}_{0}=\V{1}$.

The generator $\cL$ of the VDJP acts on smooth functions
$g\colon\Lambda\times (0,1]^{\Lambda}\to \R$ by
\begin{equation}
  \label{eq:VDJP-gen}
  (\cL g)(i,\V{\ell}) =
  \sum_{j\in\Lambda}\beta_{ij}\ell_{j}(g(j,\ell)-g(i,\ell))
  - \ddp{g(i,\ell)}{\ell_{i}}.
\end{equation}
We write $\P_{i,\V{L_{0}}}$ and $\E_{i,\V{L}_{0}}$ for the law
and expectation of the VDJP with initial condition $(i,\V{L}_{0})$.

\subsection{Rotational symmetry}

We consider the space $\R^{n+1}$ equipped with the Euclidean inner product 
$u \cdot v = u^{0}v^{0} + \dots + u^{n}v^{n}$,
which is preserved by the orthogonal group
$O(n+1)$.  In the next section we will
define an unnormalised expectation exactly as in
Section~\ref{sec:euclidean}, but we will investigate the
consequences of rotational symmetries instead of translational symmetries.

\subsection{The hemispherical spin model $\bbS^{n}_{+}$}
\label{sec:hemisph-spin-model}

\subsubsection{Hemispherical space}
\label{sec:hemispherical-space}

In this section we discuss a spin system that takes values in
$\bbS^{n}_{+}$, the open upper hemisphere of the sphere
$\bbS^{n}\subset \R^{n+1}$. See Figure~\ref{fig:spherical}.
For notational convenience we
 write $u = (u^0, \dots, u^n)\in \R^{n+1}$ and
let
$z=u^{0}$, and we will also often write
$x=u^{1}$. Then
\begin{equation}
  \label{eq:hemisphere}
  \bbS^{n}_{+} \bydef \{ u\in\R^{n+1} \mid u\cdot u = 1, z> 0 \},
\end{equation}
where the inner product is Euclidean.
$\bbS^{n}_{+}$ is parametrised by the open unit ball in $\R^{n}$, i.e., by
\begin{equation}
  \label{eq:ball}
  \ball^{n} = \{ (u^{1}, \dots, u^{n})\in \R^{n} \mid (u^{1})^{2} +
  \dots + (u^{n})^{2}< 1\}.
\end{equation}

\subsubsection{Symmetries}
In the flat and hyperbolic settings we considered the Euclidean group
$O(n) \ltimes \R^n$ and the restricted Lorentz group
$SO^{+}(n,1)$.  Unlike in these settings, the orthogonal group
$O(n+1)$ does not preserve the hemisphere. Our results, however, were
based on the \emph{infinitesimal} symmetries of flat and hyperbolic
space, and the hemisphere still possesses useful infinitesimal
symmetries. This section briefly explains this; the key identity
is~\eqref{eq:IBP-hem}.

The infinitesimal symmetries of the hemisphere form a representation
of the Lie algebra $\mathfrak{so}(n+1)$, see
Appendix~\ref{sec:symmetries}. 
The associated invariant measure $du$ on $\bbS^{n}_+$ can be written
in coordinates as
\begin{equation}
  \label{eq:Sn-haar}
  du = \frac{du^{1}\dots du^{n}}{z(u)}, \quad z(u) = 
  \sqrt{1- (u^{1})^{2} - \dots - (u^{n})^{2}}.
\end{equation}
This is the invariant measure on the full sphere
restricted to $\bbS^{n}_{+}$. We let $\theta_{s}$ denote a rotation by $s\in\R$ in the
$xz$-plane. Note that in the coordinates $(x,u^{2},\dots, u^{n})$
the infinitesimal generator of rotations in the $xz$-plane is
\begin{equation}
  \label{eq:Sn-Inf1}
  T \bydef z\ddp{}{x}, 
\end{equation}
which acts on the coordinate functions as
\begin{equation}
  \label{eq:Sn-Inf}
  Tz = -x,\qquad T x= z,  
  \quad Tu^{\alpha} = 0, \quad (\alpha =2,\dots, n).
\end{equation}
A consequence of $T$ being an infinitesimal symmetry of the
hemisphere is that for compactly supported smooth $f\colon
\bbS^{n}_{+}\to \R$,
\begin{equation}
  \label{eq:IBP-hem}
  \int_{\bbS^{n}_{+}}Tf\, du = 0,
\end{equation}
an identity which is also easily proven by rewriting the integral as
an integral over $\bbS^{n}$ and using the rotational invariance of the
full sphere.

\subsubsection{The $\bbS^{n}_{+}$ model}
\label{sec:hemisph-spin-model-1}

By a by now familiar abuse of notation, we write $\bbS^{n\Lambda}_+$ in
place of $(\bbS^{n}_+)^{\Lambda}$. Define,
for $\V{u}\in \bbS^{n\Lambda}_+$,
\begin{equation}
  \label{eq:hemi-H}
  H_{\beta}(\V{u}) \bydef \frac{1}{2}(\V{u},-\Delta_{\beta}\V{u}), 
  \quad H_{\beta,h}(\V{u}) \bydef H_{\beta}(\V{u}) + (\V{h},\V{1}-\V{z}),
\end{equation}
where as before $\beta$ and $\V{h}$ are collections of
non-negative edge and vertex weights, respectively.
For $F\colon \bbS^{n\Lambda}_+\to \R$ define the unnormalised
expectation
\begin{equation}
  \label{eq:hemi-un}
  \cb{F}_{\beta,h} \bydef
  \int_{\bbS^{n\Lambda}_{+}}F(\V{u})e^{-H_{\beta,h}(\V{u})}
  \,d\V{u}
  =
  \int_{\ball^{n\Lambda}} F(\V{u}) e^{-H_{\beta,h}(\V{u})}
  \prod_{i\in\Lambda} \frac{du_{i}^{1}\dots du_{i}^{n}}{z(u_{i})}\,
\end{equation}
where $d\V{u} \bydef \prod_{i\in\Lambda}du_{i}$, and each $du_{i}$ is
a copy of the invariant measure on $\bbS^{n}_+$.  The
\emph{$\bbS^{n}_{+}$ model} is the probability measure defined by the
normalised expectation
\begin{equation}
  \label{eq:hemi-norm}
  \avg{F}_{\beta,h} \bydef \frac{\cb{F}_{\beta,h}}{Z_{\beta,h}}, \quad
  Z_{\beta,h}\bydef \cb{1}_{\beta,h}.
\end{equation}
Unlike the GFF and $\HH^{n}$-model, the $\bbS^{n}_{+}$ model is well-defined if
$\V{h}=\V{0}$, and we will omit the subscripts $h$ to indicate $\V{h}=\V{0}$.

\begin{remark}
  \label{rem:Full-sphere}
  The \emph{spherical $O(n)$ models} are obtained by removing the
  restriction that spins lie in the upper hemisphere
  in~\eqref{eq:hemi-un}. See Remark~\ref{rem:sphericalint} below.
\end{remark}

\subsection{Isomorphism theorems}
\label{sec:isomorphism-theorems-hem}

The following isomorphism theorems are analogues of those in
Section~\ref{sec:euclidean} and~\ref{sec:hyperbolic}.  We again start
with a fundamental integration by parts identity, with the change that
now $\cL$ is the generator of the VDJP, $[\cdot]_\beta$ is the
unnormalised expectation of \eqref{eq:hemi-un}, and $T_j$ is the
infinitesimal rotation in the $xz$-plane in the $j$th coordinate
specified by \eqref{eq:Sn-Inf1}.

\begin{lemma}
  \label{lem:gen-VDJP}
  Let $[\cdot]_\beta$ be the unnormalised expectation of the
  $\bbS_+^n$ model, and let $\E_{i,\V{\ell}}$ be the expectation of
  the VDJP.  Let $f\colon \Lambda\times (0,1]^{\Lambda} \to \R$ be a
  smooth compactly supported function and let
  $\rho\colon \mathbb{S}^{n\Lambda}_+\to \R$ be smooth.  Then:
  \begin{equation}
    \label{eq:gen-VDJP-1}
    -\sum_{j\in\Lambda}\cb{\rho(\V{u})x_{j}  \cL f(j, \V{z})}_{\beta}
    =
    \sum_{j\in\Lambda} \cb{(T_j\rho)(\V{u})f(j,\V{z})}_{\beta}
    .
  \end{equation}
  In particular, the following integrated version holds for compactly
  supported $f\colon\Lambda\times(0,1]^{\Lambda}\to\R$:
  \begin{equation}
    \label{eq:generic-iso-sphere}
    \sum_{j\in\Lambda}\uspin{\rho(\V{u})x_{j} f(j, \V{z})}_{\beta}
    =
    \sum_{j\in\Lambda} \uspin{(T_j\rho)(\V{u})\int_0^\infty \E_{j,\V{z}}(f(X_t,\V{L}_t))\,dt}_{\beta}
    .
  \end{equation}
\end{lemma}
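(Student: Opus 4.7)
The plan is to mirror the proofs of Lemmas \ref{lem:gen-SRW} and \ref{lem:gen-hyp}, replacing translation/boost symmetry by the infinitesimal rotation $T_j = z_j \partial/\partial x_j$ and using the measure-invariance identity \eqref{eq:IBP-hem}. First I would establish the adjoint relation
\begin{equation*}
  \cb{(T_i f_1) f_2}_\beta = \cb{f_1 (T_i^\star f_2)}_\beta,
  \qquad T_i^\star f = -T_i f + (T_i H_\beta) f,
\end{equation*}
which follows from \eqref{eq:IBP-hem} and the product rule, provided the boundary contributions from $\partial \bbS^n_+$ vanish; these vanish because $f_1, f_2$ depend on the spins only through $\V{u}$ and $\V{z}$, and our $\rho$ is smooth while $f$ is compactly supported in $(0,1]^\Lambda$, forcing $f(i,\V{z})$ to vanish near the equator where $z_i \to 0$.

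Next I would compute $T_i H_\beta(\V{u})$. Since $u_i\cdot u_i = 1$, one has $H_\beta(\V{u}) = \mathrm{const} - \tfrac12\sum_{j,k}\beta_{jk}(z_j z_k + x_j x_k + \sum_{\alpha\ge 2} u_j^\alpha u_k^\alpha)$, and applying $T_i$ (which satisfies $T_i z_j = -x_i\delta_{ij}$, $T_i x_j = z_i\delta_{ij}$, $T_i u_j^\alpha = 0$ for $\alpha\ge 2$) gives
\begin{equation*}
  T_i H_\beta(\V{u}) = \sum_{j\in\Lambda}\beta_{ij}(x_i z_j - x_j z_i),
\end{equation*}
formally the same expression as in the hyperbolic case. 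Evaluating $T_i$ on $f(i,\V{z})$ via the chain rule yields $T_i f(i,\V{z}) = -x_i\,\partial f(i,\V{z})/\partial\ell_i$, where the crucial minus sign comes precisely from $T_i z_i = -x_i$. Combining and summing over $i\in\Lambda$, the off-diagonal contributions symmetrise after relabelling to produce
\begin{equation*}
  -\sum_{i\in\Lambda}T_i^\star f(i,\V{z})
  = \sum_{i\in\Lambda} x_i\Bigl[\sum_{j\in\Lambda}\beta_{ij} z_j(f(j,\V{z})-f(i,\V{z})) - \frac{\partial f(i,\V{z})}{\partial\ell_i}\Bigr]
  = \sum_{i\in\Lambda} x_i(\cL f)(i,\V{z}),
\end{equation*}
where in the last step I recognise the VDJP generator \eqref{eq:VDJP-gen}; the sign mismatch between the hyperbolic $T_i z_i = +x_i$ and the spherical $T_i z_i = -x_i$ is exactly compensated by the sign flip between $\partial_{\ell_i}$ in \eqref{eq:VRJP-generator} and $-\partial_{\ell_i}$ in \eqref{eq:VDJP-gen}, which is the conceptual reason the VDJP (rather than the VRJP) appears here. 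Multiplying by $\rho$ and applying the adjoint relation then gives \eqref{eq:gen-VDJP-1}.

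For the integrated statement \eqref{eq:generic-iso-sphere}, I would apply \eqref{eq:gen-VDJP-1} to $f_t(i,\V{\ell}) = \E_{i,\V{\ell}}(f(X_t,\V{L}_t))$, use Kolmogorov's backward equation $\cL f_t = \partial_t f_t$ (valid since $f_t$ is smooth in $\V{\ell}$ by the explicit step-by-step construction of the VDJP), and integrate in $t$ over $(0,\infty)$. The only nontrivial point is the vanishing of the boundary term at $t=\infty$; here it is easier than in the Euclidean/hyperbolic cases because the local times are \emph{monotonically decreasing} and the process is stopped at $\death<\infty$ when some $L^j$ hits $0$, so $f(X_t,\V{L}_t)$ is identically zero for $t\ge\death$ (since $f$ is compactly supported in $(0,1]^\Lambda$). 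Dominated convergence, using moderate growth of $\rho$ and compact support of $f$, gives
\begin{equation*}
  \lim_{T\to\infty}\cb{\rho(\V{u})\E_{j,\V{z}}f(X_T,\V{L}_T)}_\beta = 0,
\end{equation*}
concluding the proof. The only delicate step is the justification of integration by parts \eqref{eq:IBP-hem} for the integrand $\rho(\V{u}) x_j f(j,\V{z})$, which requires checking that the boundary $\{z_i=0\}$ contributes nothing; this is where compact support of $f$ in $(0,1]^\Lambda$ is essential.
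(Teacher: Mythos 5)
Your proof is correct and follows essentially the same approach as the paper: the paper's proof of Lemma~\ref{lem:gen-VDJP} simply observes that the argument of Lemma~\ref{lem:gen-hyp} carries over verbatim with the sign change $T_i z_i = -x_i$ producing the VDJP generator in place of the VRJP one, and you have fleshed out exactly this calculation (the adjoint $T_i^\star$, the computation of $T_iH_\beta$, the symmetrisation, and the vanishing of the $t=\infty$ boundary term via $\death<\infty$ and compact support of $f$). The only minor imprecision is your mention of ``moderate growth of $\rho$'' near the end, which is not an assumption of this lemma and is not needed since compact support of $f$ already localises the spin integral.
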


\begin{proof}
  By \eqref{eq:IBP-hem} we can integrate by parts. The proof is almost
  identical to that of Lemma \ref{lem:gen-hyp}, the only differences
  being $\HH^{n\Lambda}$ is replaced $\mathbb{S}^{n\Lambda}$, and
  $T_i = z_i \ddp{}{x_i}$ is the infinitesimal generator of a rotation
  in the $xz$-plane at $i$ instead of a Lorentz boost. This introduces
  a sign, i.e.,
  \begin{equation}
  T_if(\V{z}) = -x_i \ddp{f(\V{z})}{\ell_i}
  \end{equation}
  where the hyperbolic model had a factor of $+1$ in
  \eqref{eq:gen-hyp-8}, producing the VDJP generator instead of the
  VRJP generator. The remainder of the proof is essentially unchanged.
\end{proof}

\begin{remark}
  \label{rem:sphericalint}
  Analytically, \eqref{eq:gen-VDJP-1} holds for the spherical
  $O(n)$ model, although it is no longer obvious how to interpret
  $\cL$ as the generator of a Markov process since `jump rates' become
  negative. In particular, it is unclear how to obtain a formula
  like \eqref{eq:generic-iso-sphere}. A probabilistic
  interpretation of $\cL$
  for the $O(n)$ model, without restricting to the hemisphere, would be very
  interesting.
\end{remark}

The hemispherical BFS--Dynkin isomorphism theorem for the VDJP
is as follows:

\begin{theorem}
  \label{thm:BHS-hemi}
  Let $[\cdot]_\beta$ be the unnormalised expectation of the
  $\bbS_+^n$ model, and let $\E_{i,\V{\ell}}$ be the expectation of
  the VDJP.  Suppose $g\colon (0,1]^{\Lambda} \to \R$ is compactly
  supported. Then for $a,b\in\Lambda$,
  \begin{equation}
    \label{eq:BHS-hemi}
      \uspin{x_{a}x_{b}
        g(\V{z})}_{\beta} =
      \uspin{z_{a}\int_{0}^{\infty} \E_{a,\V{z}}(g(\V{L}_{t})1_{X_t = b})\,dt}_{\beta}.
  \end{equation}
\end{theorem}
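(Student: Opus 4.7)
The plan is to apply Lemma~\ref{lem:gen-VDJP} with exactly the hemispherical analogues of the choices used in Theorems~\ref{thm:eucl-dynkin} and~\ref{thm:BHS}. Specifically, I would set
\begin{equation*}
  \rho(\V{u}) \bydef x_a, \qquad f(j,\V{\ell}) \bydef g(\V{\ell})\,1_{j=b},
\end{equation*}
so that $f$ is compactly supported on $\Lambda \times (0,1]^{\Lambda}$ as required by the lemma (since $g$ is). Substituting these into the integrated identity~\eqref{eq:generic-iso-sphere} is then a matter of evaluating both sides explicitly, and everything should reduce to~\eqref{eq:BHS-hemi}.

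First I would compute $T_j\rho$. Recall from~\eqref{eq:Sn-Inf} that the infinitesimal $xz$-rotation at site $j$ satisfies $T_j x_j = z_j$, $T_j z_j = -x_j$, and $T_j u_j^{\alpha}=0$ for $\alpha\ge 2$, while $T_j$ acts trivially on the coordinates of spins $u_k$ with $k\neq j$. Since $\rho = x_a$ depends only on the spin at $a$, this yields $T_j\rho(\V{u}) = 1_{j=a}\, z_a$. The left-hand side of~\eqref{eq:generic-iso-sphere} collapses to $[x_a x_b\, g(\V{z})]_\beta$ because the indicator $1_{j=b}$ selects $j=b$. The right-hand side collapses to $[z_a \int_0^\infty \E_{a,\V{z}}(g(\V{L}_t) 1_{X_t = b})\,dt]_\beta$ because the indicator $1_{j=a}$ selects $j=a$. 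This is precisely~\eqref{eq:BHS-hemi}.

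The only point requiring care is justifying that the hypotheses of Lemma~\ref{lem:gen-VDJP} are satisfied and that the time integral is meaningful. Here $\rho$ is smooth on $\bbS^{n\Lambda}_+$, and $f$ is smooth and compactly supported in $(0,1]^{\Lambda}$ because $g$ is. Because the VDJP has monotonically decreasing local times and lifetime $\death$ (defined in Section~\ref{sec:vert-dimin-jump}), the trajectory $\V{L}_t$ under $\E_{a,\V{z}}$ stays in a compact subset of $(0,1]^{\Lambda}$ until $\death$, after which one declares $g(\V{L}_t)=0$; the compact support of $g$ then ensures the time integral converges and the boundary term at infinity in the analogue of~\eqref{eq:cor-1} vanishes.

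I do not expect any serious obstacle: the entire argument is a transcription of the proof of Theorem~\ref{thm:BHS} with the sign flip in $Tz = -x$ absorbed already at the level of Lemma~\ref{lem:gen-VDJP} (which produces the VDJP generator rather than the VRJP generator, as noted in its proof). The mild technical care concerns only the domain issues introduced by the finite lifetime of the VDJP, which the compact-support hypothesis on $g$ handles directly.
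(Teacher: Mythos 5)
Your proposal is correct and matches the paper's proof exactly: apply Lemma~\ref{lem:gen-VDJP} with $\rho(\V{u})=x_a$ and $f(j,\V{\ell})=g(\V{\ell})1_{j=b}$, then use $T_j\rho=1_{j=a}z_j$ to collapse both sides. The additional remarks on why the hypotheses hold and why the time integral converges are sound elaborations of what the paper leaves implicit.
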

\begin{proof}
  Apply Lemma~\ref{lem:gen-VDJP} with $\rho(\V{u}) = x_a$,
  $f(j,\V{\ell}) = g(\V{\ell})1_{j=b}$, and use 
  $T_j\rho(\V{u}) = 1_{j=a} z_j$.
\end{proof}
The fact that finite symmetries do not preserve the hemisphere leads
to slightly different formulations of the Eisenbaum and Ray--Knight
isomorphism theorems as compared to the GFF and $\HH^{n}$ models.  We
let $\cb{F(\V{u}) \delta_{u_{0}}(u_{a})}_{\beta}$ denote the
unnormalised expectation for the spin model in which the spin at
$u_{a}$ is fixed to be $u_{0}\in\bbS^{n}_{+}$.
\begin{theorem}
  \label{thm:RK-VDJP}
  Let $[\cdot]_\beta$ be the unnormalised expectation of the
  $\bbS_+^n$ model, and let $\E_{i,\V{\ell}}$ be the expectation of
  the VDJP.  Let $g\colon(0,1]^{\Lambda}\to \R$ be a smooth compactly supported function, let $a\in\Lambda$, and let
  $s\in(-\frac{\pi}{2},\frac{\pi}{2})$.  Then
  \begin{equation}
    \uspin{
      \E_{a,\V{z}}(g(\bm{L}_{\tau(\cos{s})})1_{\{\tau{(\cos{s})} <
        \zeta\}})\delta_{u_{0}}(u_a)}_\beta
    = \uspin{ g(\bm{z})\delta_{\theta_{s}u_0}(u_a)}_\beta
  \end{equation}
  where $\tau(\gamma) = \inf \{t \,| \,L_t^a \le \gamma\}$ and
  $u_0=(1,0,\dots,0) \in \bbS^n_+$.
\end{theorem}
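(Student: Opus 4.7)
The plan is to adapt the proof of the hyperbolic Ray--Knight theorem (Theorem~\ref{thm:hyp-rk}) to the spherical setting, using Lemma~\ref{lem:gen-VDJP} in place of Lemma~\ref{lem:gen-hyp}. There are two structural differences to account for. First, the hemisphere $\bbS^{n}_{+}$ is \emph{not} preserved by the finite rotation $\theta_{s}$, so the unnormalised expectation $[\cdot]_{\beta}$ is not rotationally invariant. Consequently, one cannot convert $\delta_{\theta_{s}u_{0}}(u_{a})$ into $\delta_{u_{0}}(u_{a})$ by a change of variables, and the two delta functions must appear directly on opposite sides of the final identity (compare the statement with Theorem~\ref{thm:hyp-rk}, whose RHS uses boost invariance). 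Second, the VDJP has finite lifetime $\zeta$, which forces the indicator $1_{\{\tau(\cos s)<\zeta\}}$ to appear.

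Assume $s\neq 0$; the case $s=0$ is trivial since then $\theta_{s}u_{0}=u_{0}$ and $\tau(1)=0$. Apply Lemma~\ref{lem:gen-VDJP} with $\rho(\V{u})=\rho(u_{a})$ and $f(j,\V{\ell})=g(\V{\ell})\eta(\ell_{a})$, where $\rho\colon\bbS^{n}_{+}\to\R$ and $\eta\colon\R\to\R$ are smooth with compact support, chosen so that $T_{a}\rho$ and $\eta$ are approximations to $\delta_{u_{0}}(u_{a})-\delta_{\theta_{s}u_{0}}(u_{a})$ and $\delta_{\cos s}(\ell_{a})$ respectively, subject to the compatibility condition $\rho(u_{a})\eta(z_{a})=0$ for all $u_{a}\in\bbS^{n}_{+}$. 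As in the proof of Theorem~\ref{thm:eucl-rk}, an explicit construction is available: one may take
\begin{equation}
  \rho(u_{a})=\int_{0}^{s-\varepsilon}\phi_{\varepsilon}(\theta_{-r}u_{a})\,dr,\qquad \eta(\ell)=\psi_{\varepsilon}\!\left(\ell-\cos s-\tfrac{\varepsilon}{2}\right),
\end{equation}
for smooth mollifiers $\phi_{\varepsilon}$, $\psi_{\varepsilon}$, which ensures the supports of $\rho(u_{a})$ and $\eta(z_{a})$ are disjoint. Since $\rho(u_{a})\eta(z_{a})=0$, the left-hand side of \eqref{eq:gen-VDJP-1} vanishes, giving
\begin{equation}
  \uspin{T_{a}\rho(u_{a})\int_{0}^{\infty}\E_{a,\V{z}}\!\left(g(\V{L}_{t})\eta(L_{t}^{a})\right)dt}_{\beta}=0.
\end{equation}

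The key step is to pass to the limit as $\varepsilon\to 0$. For the time integral, observe that $L^{a}_{t}$ is non-increasing with $dL^{a}_{t}/dt=-1_{X_{t}=a}$, so it crosses $\cos s$ at most once during $[0,\zeta)$, and does so precisely when $\tau(\cos s)<\zeta$. A change-of-variables argument gives
\begin{equation}
  \lim_{\varepsilon\to 0}\int_{0}^{\infty}\E_{a,\V{z}}\!\left(g(\V{L}_{t})\eta_{\varepsilon}(L_{t}^{a})\right)dt=\E_{a,\V{z}}\!\left(g(\V{L}_{\tau(\cos s)})\,1_{\{\tau(\cos s)<\zeta\}}\right),
\end{equation}
where the contribution from intervals on which $X_{t}\neq a$ (where $L^{a}_{t}$ is constant) vanishes in the limit because such a constant value almost surely differs from $\cos s$. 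Inserting the limit of $T_{a}\rho$ then yields
\begin{equation}
  \uspin{\delta_{u_{0}}(u_{a})\,\E_{a,\V{z}}(g(\V{L}_{\tau(\cos s)})1_{\{\tau(\cos s)<\zeta\}})}_{\beta}=\uspin{\delta_{\theta_{s}u_{0}}(u_{a})\,\E_{a,\V{z}}(g(\V{L}_{\tau(\cos s)})1_{\{\tau(\cos s)<\zeta\}})}_{\beta}.
\end{equation}

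To conclude, the delta $\delta_{\theta_{s}u_{0}}(u_{a})$ fixes $z_{a}=\cos s$, so the initial local time at $a$ already equals $\cos s$; hence $\tau(\cos s)=0$, the indicator is $1$, and $\V{L}_{\tau(\cos s)}=\V{z}$. The right-hand side collapses to $\uspin{g(\V{z})\delta_{\theta_{s}u_{0}}(u_{a})}_{\beta}$, which matches the claim. The main technical obstacle is a careful justification of the time-integral limit — one must quantitatively bound the $\varepsilon$-neighbourhood measure $|\{t<\zeta:L^{a}_{t}\in(\cos s-\varepsilon,\cos s+\varepsilon)\}|$ uniformly enough to extract the hitting-time evaluation — but this is routine given that $L^{a}_{t}$ is piecewise linear with deterministic slope. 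The absence of rotational invariance of $[\cdot]_{\beta}$ is what gives the spherical statement its slightly different shape compared to the hyperbolic and Euclidean cases, but is not an obstacle to the argument itself.
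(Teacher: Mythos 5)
Your proof takes essentially the same approach as the paper's: apply Lemma~\ref{lem:gen-VDJP} with $\rho(u_a)$ and $\eta(\ell_a)$ chosen so that $T_a\rho$ and $\eta$ approximate $\delta_{u_0}-\delta_{\theta_su_0}$ and $\delta_{\cos s}$ with disjoint supports, then pass to the limit. You also correctly identify the two structural departures from the hyperbolic case (no rotational invariance of the hemisphere, hence distinct delta functions on each side; finite lifetime $\zeta$, hence the indicator).

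The one genuine issue is the explicit construction of $\eta$, which is transcribed directly from the Euclidean proof of Theorem~\ref{thm:eucl-rk} without accounting for the reversed orientation of the spherical coordinate. Your choice $\eta(\ell)=\psi_\varepsilon(\ell-\cos s-\tfrac{\varepsilon}{2})$ is supported in $(\cos s,\cos s+\varepsilon)$, while $\rho$ is supported in an $\varepsilon/2$-tube around the arc $\{\theta_r u_0: 0\le r\le s-\varepsilon\}$, on which $z_a\ge \cos(s-\varepsilon)-\varepsilon/2$. Disjointness would require $\cos(s-\varepsilon)-\varepsilon/2>\cos s+\varepsilon$, i.e.\ $\sin s>\tfrac32+O(\varepsilon)$, which never holds. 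The reason is that in the Euclidean case $\frac12|u_a|^2$ \emph{increases} along the arc away from $u_0$, whereas on the sphere $z_a$ \emph{decreases}, so the shift must go the other way: take for instance $\eta(\ell)=\psi_\varepsilon(\ell-\cos s+\tfrac{3\varepsilon}{2})$, supported in $(\cos s-2\varepsilon,\cos s-\varepsilon)$, which is disjoint from the support of $\rho$ for small $\varepsilon$ since $\cos(s-\varepsilon)-\varepsilon/2>\cos s-\varepsilon$. With this fix the limiting analysis proceeds exactly as you describe: the $\delta_{u_0}$ term concentrates the time integral at $\tau(\cos s)$ with the indicator $1_{\{\tau(\cos s)<\zeta\}}$, and the $\delta_{\theta_s u_0}$ term fixes $z_a=\cos s$ so $\tau(\cos s)=0$ and $\V{L}_{\tau(\cos s)}=\V{z}$. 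This is a local sign error rather than a structural flaw; the paper itself only asserts that a suitable construction exists without writing it out.
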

\begin{proof}
  The proof is analogous to the proof of Theorem~\ref{thm:eucl-rk}.
  Since the identity is trivial if $s=0$, assume $s\neq 0$.  We begin
  by applying Lemma~\ref{lem:gen-VDJP} with
  $\rho(\V{u}) \bydef \rho_{\varepsilon}(u_{a})$,
  $f(j,\V{\ell}) \bydef g(\V{\ell})\eta_{\varepsilon}(\ell_a){1_{j=a}}$, with the functions
  $\rho_\varepsilon \colon \bbS^n_{+} \to \R$ and $\eta_{\varepsilon}
  \colon (0,1] \to\R$ chosen such that $T_{a}\rho$ and $\eta$ are smooth
  compactly supported approximations to
  $\delta_{u_{0}}(u_a)-\delta_{\theta_{s}{u_{0}}}(u_a)$ and
  $\delta_{\cos{s}}(\ell_a)$ subject to $\rho_{\varepsilon}(u_a)\eta_{\varepsilon}(z_a)=0$ for
  all $u_a\in \bbS^n_{+}$.  Since $s\neq 0$, these conditions can be shown
  to be satisfiable by explicit construction. Exactly as in the proof
  of Theorem~\ref{thm:eucl-rk} this yields
  \begin{equation}
    \label{eq:RKhyp2}
    \cb{T_{a}\rho_\varepsilon(u_{a})\int_{0}^{\infty}\E_{a,\V{z}}(g(\V{L}_{t})\eta_\varepsilon(L_{t}^{a}){1_{X_t=a}})\,dt}_{\beta}=0.
  \end{equation}
  To conclude, we use that $\theta_{s}{u_{0}}$ has $z$-coordinate $\cos s$,
  so the term with $\delta_{\theta_{s}{u_{0}}}(u_{a})$ concentrates the
  $u_a$ integral at $z_{a}=\cos s$, and hence the time integral at
  $t=0$. This gives the right-hand side of \eqref{eq:hyp-rk}.  The
  term with $\delta_{u_{0}}(u_{a})$ concentrates the time integral at
  $\tau(\cos s)$ and gives the left-hand side of \eqref{eq:hyp-rk} as
  the integrand is non-zero only if $\tau(\cos s)<\death$.
\end{proof}

The hemispherical Eisenbaum isomorphism theorem concerns a killed
VDJP. The generator of this killed process
$(X_{t},\V{L}_{t})_{t\geq 0}$ acts on smooth compactly supported
$f\colon \Lambda\times(0,1]^{\Lambda}\to \R$ by
\begin{equation}
  \label{eq:VDJP-gen-k}
  (\cL^{h} f)(i,\V{\ell})
  \bydef \cL f(i,\V{\ell})  - h_{i}f(i,\V{\ell}) 
  ,\quad \text{i.e.,}\quad
  \cL^h = \cL - \V{h},
\end{equation}
where $\cL$ is the generator of the VDJP and $h_i\geq 0$ are the
killing rates. We let $\E^{h}_{i,\V{\ell}}$ denote the corresponding
deficient expectation. As for the SRW, the killing does not depend on the
initial local times, i.e.,
\begin{equation}
  \label{eq:VDJP-gen-k-1}
  \E^{h}_{i,\V{\ell}} \pb{g(X_{t},\V{L}_{t})} 
  = 
  \E_{i,\V{\ell}}\pb{g(X_{t},\V{L}_{t})e^{-\sum_{j\in\Lambda}h_{j}(\ell_j-L^{j}_{t})}}.
\end{equation}
Notice that the sign in the killing term
$e^{-\sum_{j\in\Lambda}h_{j}(\ell_j-L^{j}_{t})}$ is reversed: this
because the local times of the VDJP are decreasing rather than
increasing by \eqref{eq:local-VDJP}. We can rewrite \eqref{eq:VDJP-gen-k-1} as
\begin{equation}
  \label{eq:VDJP-nokill}
  \E_{i,\V{\ell}}(g(X_t,\V{L}_t)e^{-\sum_{j\in\Lambda} h_j(1-L_t^j)})) 
  = \E_{i,\V{\ell}}^h(g(X_t,\V{L}_t))e^{-\sum_{j\in\Lambda} h_j (1-\ell_j)}.
\end{equation}

\begin{theorem}
  \label{thm:EIS-VDJP}
  Let $[\cdot]_\beta$ be the unnormalised expectation of the
  $\bbS_+^n$ model, and let $\E_{i,\V{\ell}}$ be the expectation of
  the killed VDJP.  Suppose that $g\colon(0,1]^{\Lambda}\to \R$ is
  compactly supported, and
  $s\in(-\frac{\pi}{2},\frac{\pi}{2})$. Then
  \begin{equation}
    \label{eq:EIS-VDJP-g}
    \uspin{x_{a}
      g(\V{z})e^{-(\V{h},\V{1}-\theta_{-s}\V{z})}}_{\beta} =
    \sum_{i\in\Lambda}h_{i}\uspin{(x_{i}-\theta_{-s}x_{i})
      \int_{0}^{\infty} \E_{i,\V{z}}^h(g(X_{t},\V{L}_{t}))\,dt\,e^{(\V{h},\V{1}-\theta_{-s}\V{z})}}_{\beta}.
  \end{equation}
\end{theorem}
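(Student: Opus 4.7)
The strategy is to mirror the proofs of Theorems~\ref{thm:eucl-eis} and~\ref{thm:EIS-VRJP}, replacing the Euclidean and hyperbolic integration-by-parts identities with the spherical one, Lemma~\ref{lem:gen-VDJP}. Specifically, I plan to apply \eqref{eq:generic-iso-sphere} with the weights
\begin{align*}
  \rho(\V{u}) &\bydef e^{-(\V{h},\V{z}-\theta_{-s}\V{z})}
  = e^{-(\V{h},\V{1}-\theta_{-s}\V{z})}\,e^{(\V{h},\V{1}-\V{z})},\\
  f(j,\V{\ell}) &\bydef g(\V{\ell})\,e^{-(\V{h},\V{1}-\V{\ell})}\,1_{j=a}.
\end{align*}
The hypothesis that $g$ is compactly supported on $(0,1]^{\Lambda}$ makes $f$ compactly supported (or approximable by such), and $\rho$ is smooth and bounded on $\bbS^{n\Lambda}_+$, so the hypotheses of Lemma~\ref{lem:gen-VDJP} are satisfied.

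The decisive algebraic input is the computation of $T_j\rho$. Using $T_jz_j=-x_j$ from \eqref{eq:Sn-Inf}, together with the fact that $T_j$ generates the one-parameter rotation group $\{\theta_r\}_{r\in\R}$ in the $xz$-plane at vertex~$j$ and therefore commutes with $\theta_{-s}$, I get $T_j(\theta_{-s}z_j)=\theta_{-s}(T_jz_j)=-\theta_{-s}x_j$. Consequently,
\begin{equation*}
  T_j\rho = h_j\,(x_j-\theta_{-s}x_j)\,\rho.
\end{equation*}

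Substituting $\rho$ and $f$ into \eqref{eq:generic-iso-sphere}, the left-hand side telescopes: the indicator $1_{j=a}$ collapses the sum, and the algebraic identity $\rho(\V{u})\,e^{-(\V{h},\V{1}-\V{z})}=e^{-(\V{h},\V{1}-\theta_{-s}\V{z})}$ produces exactly $[x_a\,g(\V{z})\,e^{-(\V{h},\V{1}-\theta_{-s}\V{z})}]_\beta$. On the right-hand side, the Feynman--Kac identity \eqref{eq:VDJP-nokill} converts the unkilled expectation $\E_{j,\V{z}}\pb{g(\V{L}_t)\,e^{-(\V{h},\V{1}-\V{L}_t)}\,1_{X_t=a}}$ into $\E^h_{j,\V{z}}\pb{g(\V{L}_t)\,1_{X_t=a}}\,e^{-(\V{h},\V{1}-\V{z})}$, and this factor combines with $\rho$ to produce the appropriate magnetic weight. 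The rotational invariance $[\theta_sF]_\beta=[F]_\beta$, a consequence of \eqref{eq:IBP-hem} iterated in $s$, can then be applied to realign the exponential factor with the form stated in \eqref{eq:EIS-VDJP-g}.

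The main obstacle is the careful bookkeeping of signs: compared to the hyperbolic setting, both $T_jz_j$ and the mass term in $H_{\beta,h}$ have their signs reversed, and these reversals must propagate consistently through the whole derivation so as to reproduce the same structural identity. A minor technical point is that the initial local time $\V{z}$ must lie in $(0,1]^{\Lambda}$ for $\E^h_{j,\V{z}}$ to be well-defined up to $\death$; this is automatic because spins in $\bbS^{n}_+$ satisfy $z_i\in(0,1]$.
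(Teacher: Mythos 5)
Your choice of $\rho$ and $f$, the computation $T_j\rho=h_j(x_j-\theta_{-s}x_j)\rho$, and the Feynman--Kac step via~\eqref{eq:VDJP-nokill} all match the paper's proof, and after that substitution the identity is already complete. The final step you propose, however, is both unnecessary and invalid: you invoke $[\theta_sF]_\beta=[F]_\beta$ for the $\bbS^n_+$ model, but finite rotations do \emph{not} preserve the upper hemisphere, so this is not a valid identity for the hemispherical Berezin integral. The paper is explicit about this point (see the remark at the start of Section~\ref{sec:isomorphism-theorems-hem}: ``The fact that finite symmetries do not preserve the hemisphere leads to slightly different formulations of the Eisenbaum and Ray--Knight isomorphism theorems''), and this is precisely why Theorem~\ref{thm:EIS-VDJP} is stated in unboosted coordinates, unlike the hyperbolic Theorem~\ref{thm:EIS-VRJP} where the boost invariance $[\theta_s\cdot]_\beta=[\cdot]_\beta$ does hold and is used. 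Equation~\eqref{eq:IBP-hem} gives only the \emph{infinitesimal} invariance $\int_{\bbS^n_+}Tf\,du=0$ for $f$ compactly supported in the open hemisphere; integrating this in $s$ fails once the support of $\theta_rf$ hits the equator. The rest of your argument is sound, and simply dropping this last sentence would give a correct proof identical to the paper's.
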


\begin{proof}
  We apply Lemma~\ref{lem:gen-VDJP} with
  \begin{align}
      \rho(\V{u})
      &\bydef e^{(\V{h},\theta_{-s}\V{z} - \V{z})} = e^{(\V{h},\V{1}-\V{z})}(e^{-(\V{h},\V{1}-\theta_{-s}\V{z})})\\
      f(j,\V{\ell}) &\bydef g(\V{\ell})e^{-(\V{h},\V{1}-\V{\ell})}
      1_{j=a},
  \end{align}
  and use that
  $(T_j\rho)(\V{u}) = h_j(x_j - \theta_{-s}x_j) \rho(\V{u})$ to obtain
  \begin{align}
    \nonumber
      \sum_{j\in\Lambda} h_j\uspin{(x_j - \theta_{-s}x_j)
        \rho(\V{u})\int_0^\infty \E_{j,\V{z}}(
        g(\V{L}_t)1_{X_{t}=a}e^{-(\V{h},\V{1}-\V{L_t})})\,dt}_{\beta}
      &= \sum_{j\in\Lambda}\uspin{\rho(\V{u})x_{j}
        g(\V{z})1_{j=a}e^{-(\V{h},\V{1}-\V{z})}}_{\beta}\\
\label{eq:hemi-eis-1}
      &= 
      \uspin{x_{a}
        g(\V{z})e^{-(\V{h},\V{1}-\theta_{-s}\V{z})}}_{\beta}.
  \end{align}
  Using \eqref{eq:VDJP-nokill} to substitute
  \begin{equation}
    \rho(\V{u})\E_{j,\V{z}}(
    g(\V{L}_t)1_{X_{t}=a}e^{-(\V{h},\V{1}-\V{L_t})}) 
    = \E_{j,\V{z}}^h(
    g(\V{L}_t)1_{X_{t}=a})e^{-(\V{h},\V{1}-\theta_{-s}\V{z})}
  \end{equation}
  on the left hand side of \eqref{eq:hemi-eis-1} gives the desired
  result.
\end{proof}

\section{Isomorphism theorems for supersymmetric spin models}
\label{sec:SUSY}

In this section we introduce the supersymmetric $\R^{2|2}$,
$\HH^{2|2}$, and $\bbS^{2|2}_{+}$ spin models and derive isomorphism
theorems that relate them to the SRW, the VRJP, and the VDJP.  Readers who
are not familiar with the mathematics of supersymmetry may consult
Appendix~\ref{sec:SUSY-intro}, which contains an introduction to
supersymmetry as used in this article, before reading this section.

\subsection{Supersymmetric Gaussian free field}
\label{sec:susy-gaussian-free}

\subsubsection{Super-Euclidean space and the SUSY GFF}
\label{sec:SUSYGFF}

The \emph{supersymmetric Gaussian free field} (\emph{SUSY GFF} or
\emph{$\R^{2|2}$ model}) is defined in terms
of the algebra of observables $\Omega^{2\Lambda}(\R^{2\Lambda})
\bydef \Omega^{2|\Lambda|}(\R^{2|\Lambda|})$, see
Appendix~\ref{sec:SUSY-intro}.
This algebra replaces the algebra of
observables $C^\infty(\R^{n\Lambda})$ of the usual $n$-component
Gaussian free field.

Concretely, let $(\xi_{i})_{i\in\Lambda}$ and
$(\eta_{i})_{i\in\Lambda}$ be the generators of the Grassmann algebra
$\Omega^{2\Lambda}$, let $(x_{i},y_{i})_{i\in\Lambda}$ be coordinates
for $\R^{2\Lambda}$, and let $\Omega^{2\Lambda}(\R^{2\Lambda})$ be the
algebra with coefficients in $C^{\infty}(\R^{2\Lambda})$ generated by
$(\xi_{i})_{i\in\Lambda}$ and $(\eta_{i})_{i\in\Lambda}$ as in
Appendix~\ref{sec:SUSY-intro}.  We call elements $F$ of
$\Omega^{2\Lambda}(\R^{2\Lambda})$ \emph{forms}, and say that a form
is smooth, rapidly decaying, compactly supported, etc., if all of its
coefficient functions have this property.

We think of $\Omega^{2\Lambda}(\R^{2\Lambda})$ as the smooth functions
on a putative superspace $(\R^{2|2})^{\Lambda}$, though
$(\R^{2|2})^{\Lambda}$ has no formal meaning, i.e., we will only
work with the algebra $\Omega^{2\Lambda}(\R^{2\Lambda})$.
There are two ordinary (even) coordinates and two anticommuting (odd)
coordinates for each element $i \in \Lambda$, and by
analogy with the familiar representation of a vector $u_i \in \R^{2}$
in terms of its coordinate functions $u_i = (x_i,y_i)$, we will abuse
notation and write $u_i \in \R^{2|2}$ to refer to a \emph{supervector}
$u_i = (x_i,y_i,\xi_i,\eta_i)$, i.e., a tuple of of even and odd coordinates.
Further, we define the super-Euclidean `inner product' on $\R^{2|2}$ by
\begin{equation}\label{eq:susy-euc-ip}
  u_i\cdot u_j \bydef  x_ix_j + y_iy_j - \xi_i\eta_j + \eta_i\xi_j.
\end{equation}

Note that the `inner product' 
\eqref{eq:susy-euc-ip} defines a form, and is not an
inner product in the standard sense of the term. Similarly, we
write $\V{u} = (u_{i})_{i\in\Lambda}$ to denote the collection of the
$u_{i}$, and define $(\V{u},-\Delta_{\beta}\V{u})$ analogously, i.e.,
by
\begin{align}
\label{eq:GFF-Action}
(\V{u},-\Delta_{\beta}\V{u})
&\bydef
\sum_{i\in\Lambda}\sum_{j\in\Lambda} \beta_{ij}(x_{i}(x_{i}-x_{j}) +
y_{i}(y_{i}-y_{j}) -\xi_{i}(\eta_{i}-\eta_{j})
+\eta_{i}(\xi_{i}-\xi_{j}))
\\
&=\frac{1}{2}\sum_{i,j\in\Lambda} \beta_{ij}( u_{i}\cdot u_{i} +
u_{j}\cdot u_{j} - u_{i}\cdot u_{j} - u_j \cdot u_i),
\end{align}
where the second equality is a calculation. The
formal rules introduced above imply the last quantity is
$\frac14\sum_{i,j\in\Lambda}\beta_{ij} (u_{i}-u_{j})^{2}$ if we interpret
$u_{i}-u_{j}$ as
$(x_{i}-x_{j},y_{i}-y_{j},\xi_{i}-\xi_{j},\eta_{i}-\eta_{j})$ and use
\eqref{eq:susy-euc-ip} to compute
$(u_{i}-u_{j})^{2}\bydef   (u_{i}-u_{j})\cdot (u_{i}-u_{j})$.

For $F\in \Omega^{2\Lambda}(\R^{2\Lambda})$, the normalised Berezin integral is denoted
\begin{equation}
  \label{eq:R22int}
  \int_{(\R^{2|2})^{\Lambda}}F \bydef \frac{1}{(2\pi)^{|\Lambda|}}\int\, d\V{x}\,d\V{y}\, \partial_{\V{\eta}}\,\partial_{\V{\xi}}\;F,
\end{equation}
where $\partial_{\V{\eta}}\partial_{\V{\xi}}$ is defined by
$\partial_{\eta_{|\Lambda|}}\partial_{\xi_{|\Lambda|}}\dots \partial_{\eta_{1}}\partial_{\xi_{1}}$,
$d\V{x} = dx_{|\Lambda|}\dots dx_{1}$, and
$d\V{y}=dy_{|\Lambda|}\dots dy_{1}$ for some fixed ordering of the
$i\in\Lambda$ from $1$ to $|\Lambda|$.

To define the supersymmetric GFF, suppose $\V{h}\geq \V{0}$ and let
\begin{equation}
  \label{eq:SGFF-H}
  H_{\beta}(\V{u}) \bydef \frac{1}{2}(\V{u},-\Delta_{\beta}\V{u}), \quad
  H_{\beta,h}(\V{u}) \bydef H_{\beta}(\V{u}) + \frac{1}{2}(\V{h},|\V{u}|^2),
\end{equation}
where $|\V{u}|^{2} \bydef ( u_{i}\cdot u_{i})_{i\in\Lambda}$, and
hence $(\V{h},|\V{u}|^2) = \sum_{i\in\Lambda}h_{i} u_{i}\cdot u_{i}$.
Both $H_{\beta}$ and $H_{\beta,h}$ are elements of
$\Omega^{2\Lambda}(\R^{2\Lambda})$.  The superexpectation of the
\emph{supersymmetric Gaussian free field} is the linear map that
assigns to each $F\in \Omega^{2\Lambda}(\R^{2\Lambda})$ the value
\begin{equation}
  \label{eq:SGFF-E}
  \cb{F}_{\beta,h}
  \bydef \int_{(\R^{2|2})^{\Lambda}}F e^{-H_{\beta,h}},
\end{equation}
and we write $\cb{F}_{\beta}$ when $\V{h}=\V{0}$.  For $\V{h} \neq \V{0}$,
this superexpectation is indeed normalised; see the paragraph below
\eqref{e:localisation-bis}.

\subsubsection{Symmetries} 

In this section we describe the two aspects of the symmetries of the
SUSY GFF that we require. Further details about these symmetries,
which form a Lie superalgebra, can be found in
Appendix~\ref{sec:susy-symmetries}.

As for the GFF, the infinitesimal generator of translation in
the $x$-direction at $i\in\Lambda$ is
\begin{equation}
  T_i \bydef \ddp{}{x_i},
\end{equation}
and $T_{i}$ acts on coordinates as
\begin{equation}
  T_ix_j = 1_{i = j},\quad T_iy_j = 0, \quad T_i\eta_j = 0,\quad
  T_i\xi_j = 0, \qquad i,j\in\Lambda.
\end{equation}

Thus it is analogous to the operators $T_i$ for the ordinary GFF, and
it leads to analogous Ward identities, i.e., for forms $F$ with
sufficient decay,
\begin{equation}
  \label{eq:IBP-SUSY-GFF}
  \int_{(\R^{2|2})^{\Lambda}} (T_{i}F) = 0.
\end{equation}
For $s\in \R$ the finite symmetry associated to
$\sum_{i\in\Lambda} T_i$ will be denoted $\theta_s$, which acts by
\begin{equation}
  \theta_s x_i = x_i+s,\quad\theta_s y_i = y_i, \quad \theta_s \eta_i = \eta_i,\quad \theta_s \xi_i = \xi_i,
  \qquad i\in\Lambda.
\end{equation}

The second symmetry of importance is the \emph{supersymmetry generator}
\begin{equation}\label{eq:susy-gen}
Q \bydef \sum_{i\in\Lambda} Q_i\, 
\qquad 
Q_i \bydef \xi_i \ddp{}{x_i} + \eta_i \ddp{}{y_i} - x_i\ddp{}{\eta_i} + y_i \ddp{}{\xi_i},
\end{equation}
which acts on coordinates as
\begin{equation}
  Qx_i = \xi_i, \quad Qy_i = \eta_i, \quad Q\xi_i = -y_i, \quad
  Q\eta_i = x_i, \qquad i\in\Lambda.
\end{equation}
This supersymmetry generator is responsible for a very powerful Ward
identity known as the \emph{localisation lemma}: for any smooth
function $f\colon \R^{\Lambda\times\Lambda} \to \R$ with sufficient
decay,
\begin{equation} \label{e:localisation-bis}
  \int_{(\R^{2|2})^{\Lambda}} f(\V{u}\V{u}^T) = f(\V{0}),
\end{equation}
where $\V{u}\V{u}^T$ denotes the collection of forms
$(u_i \cdot u_j)_{i,j\in\Lambda}$; see Theorem~\ref{thm:F0} and
Corollary~\ref{cor:localisation}.  In particular, the expectation
\eqref{eq:SGFF-E} is normalised if $\V{h}\neq\V{0}$, i.e.,
$\uspin{1}_{\beta,h}=1$.

\subsubsection{Isomorphism theorems for the SUSY GFF}
\label{sec:isom-theor-susy}

This section presents isomorphism theorems for the SUSY GFF.
The statement of the following
fundamental Ward identity is formally identical to that of
Lemma~\ref{lem:gen-SRW}, but
now the expectation $\cb{\cdot}_{\beta}$ is that of a SUSY GFF.

\begin{lemma}
  \label{lem:gen-SUSY-SRW}
  Let $[\cdot]_\beta$ be the superexpectation of the $\R^{2|2}$ model, and let $\E_{i,\V{\ell}}$ be the expectation of the SRW.
  Let $f\colon \Lambda\times \R^{\Lambda} \to \R$ be a smooth function
  with rapid decay, and let $\rho\in \Omega^{2\Lambda}(\R^{2\Lambda})$
  have moderate growth.  Then:
  \begin{equation}
    \label{eq:gen-lem-susy}
    -\sum_{j\in\Lambda}\cb{\rho(\V{u})x_{j} \cL f(j, \frac{1}{2} |\V{u}|^{2})}_{\beta}
    =
    \sum_{j\in\Lambda} \cb{(T_j\rho)(\V{u})f(j,\frac{1}{2}|\V{u}|^{2})}_{\beta}
    .
  \end{equation}
  In particular, the following integrated version holds for all smooth
  $f\colon\Lambda\times\R^{\Lambda}\to\R$ with rapid decay:
  \begin{equation}
    \label{eq:generic-iso-susy}
    \sum_{j\in\Lambda}\uspin{\rho(\V{u})x_{j} f(j, \frac{1}{2} |\V{u}|^{2})}_{\beta}
    =
    \sum_{j\in\Lambda} \uspin{(T_j\rho)(\V{u})\int_0^\infty
      \E_{j,\frac{1}{2}|\V{u}|^2}(f(X_t,\V{L}_t))\,dt}_{\beta}
    .
  \end{equation}
\end{lemma}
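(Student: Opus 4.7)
The plan is to mirror the proof of Lemma~\ref{lem:gen-SRW} step by step, substituting the Berezin Ward identity \eqref{eq:IBP-SUSY-GFF} for ordinary integration by parts and using that $T_j = \partial/\partial x_j$ acts as a derivation on the algebra $\Omega^{2\Lambda}(\R^{2\Lambda})$. Since $T_j$ preserves the subalgebra and annihilates $y_i,\xi_i,\eta_i$, the formal manipulations are almost identical; the substance of the argument lies in checking that all quantities make sense as forms and that the supersymmetric inner product $u_i\cdot u_i = x_i^2+y_i^2+2\eta_i\xi_i$ still yields $T_i(\tfrac{1}{2}|u_i|^2)=x_i\delta_{ij}$ so that the chain rule produces the same expression as in the Euclidean case.

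First I would derive the integration-by-parts relation $[(T_jF_1)F_2]_\beta = [F_1(T_j^\star F_2)]_\beta$ with $T_j^\star F \bydef -T_j F + (T_j H_\beta)F$, by applying \eqref{eq:IBP-SUSY-GFF} to $F_1F_2 e^{-H_\beta}$ and using that $T_j$ is a derivation. Next I would compute $T_jH_\beta(\V{u})$: because the $y$, $\xi$, $\eta$ sectors of $H_\beta$ contribute nothing to $T_j$, one obtains $T_jH_\beta(\V{u}) = (-\Delta_\beta \V{x})_j$ exactly as in \eqref{eq:gen-SRW-4}. Combined with the chain rule $T_j\bigl(f(i,\tfrac{1}{2}|\V{u}|^2)\bigr) = x_j\,\partial_{\ell_j}f(i,\tfrac{1}{2}|\V{u}|^2)\,\delta_{ij}$, which is legitimate because $f$ is smooth and $\tfrac{1}{2}|\V{u}|^2$ differs from $(\tfrac{1}{2}(x_i^2+y_i^2))_i$ only by the nilpotent piece $(\eta_i\xi_i)_i$ (so $f$ expands as a finite Taylor series in the Grassmann part), I obtain the analogue of \eqref{eq:gen-SRW-6}:
\begin{equation*}
  -\sum_{j\in\Lambda} T_j^\star f(j,\tfrac{1}{2}|\V{u}|^2)
  = \sum_{j\in\Lambda} x_j\,(\cL f)(j,\tfrac{1}{2}|\V{u}|^2).
\end{equation*}
Multiplying by $\rho$ and using the integration-by-parts relation yields \eqref{eq:gen-lem-susy}.

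For the integrated statement \eqref{eq:generic-iso-susy} I would apply \eqref{eq:gen-lem-susy} to the smooth, rapidly-decaying function $f_t(i,\V{\ell})\bydef \E_{i,\V{\ell}}f(X_t,\V{L}_t)$, use Kolmogorov's backward equation $\cL f_t = \partial_t f_t$ to express the left-hand side as a time derivative, and integrate over $t\in(0,\infty)$. The boundary term at $t=\infty$ vanishes by exactly the argument given for the GFF: $L^i_t\to\infty$ in probability, so $\E_{j,\frac{1}{2}|\V{u}|^2}f(X_T,\V{L}_T)\to 0$, and dominated convergence applies coefficient-by-coefficient in $\Omega^{2\Lambda}(\R^{2\Lambda})$ since $\rho$ has moderate growth and each Grassmann coefficient of $e^{-H_\beta}$ decays rapidly on $\R^{2\Lambda}$.

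The only genuine subtlety, and the step I expect to require the most care, is justifying the chain-rule and decay manipulations at the level of forms rather than functions: one must verify that $f(j,\tfrac{1}{2}|\V{u}|^2)$, $T_j^\star f(j,\tfrac{1}{2}|\V{u}|^2)$, and $f_t(j,\tfrac{1}{2}|\V{u}|^2)$ are well-defined elements of $\Omega^{2\Lambda}(\R^{2\Lambda})$ with body-variable decay inherited from $f$, and that term-by-term differentiation in $t$ commutes with the Berezin integral. All of this reduces to the observation that the Grassmann part of $\tfrac{1}{2}|\V{u}|^2$ is nilpotent, so the Taylor expansion of $f$ around its body part terminates and every manipulation is ultimately a finite sum of smooth operations on ordinary functions on $\R^{2\Lambda}$ of the same decay class as $f$.
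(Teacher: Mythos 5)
Your proposal mirrors the paper's argument exactly: the paper's own proof of this lemma simply states that, starting from the SUSY Ward identity \eqref{eq:IBP-SUSY-GFF}, the proof is identical to that of Lemma~\ref{lem:gen-SRW}, and you carry out precisely those steps (integration by parts via $T_j^\star$, the computation $T_jH_\beta = (-\Delta_\beta\V{x})_j$, the chain rule, Kolmogorov's backward equation, and the decay of the boundary term), with the correct observations that the Grassmann part of $\frac{1}{2}|\V{u}|^2$ is nilpotent so the formal Taylor expansion terminates and everything reduces coefficient-by-coefficient to the Euclidean case. One small slip: the chain rule should read $T_j f(i,\frac{1}{2}|\V{u}|^2) = x_j\,\partial_{\ell_j}f(i,\frac{1}{2}|\V{u}|^2)$ with no $\delta_{ij}$, since $T_j=\partial/\partial x_j$ differentiates through the $\ell_j$-dependence of $f(i,\cdot)$ regardless of $i$; the displayed conclusion $-\sum_j T_j^\star f(j,\cdot)=\sum_j x_j(\cL f)(j,\cdot)$ is nevertheless correct.
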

\begin{proof}
  Starting from \eqref{eq:IBP-SUSY-GFF},
  the proof is identical to that of Lemma~\ref{lem:gen-SRW}.
\end{proof}

As a consequence, we obtain the same isomorphism theorems for the
supersymmetric GFF as for the non-supersymmetric one. However, for the
supersymmetric model, we may in addition use \emph{localisation} to
greatly simplify the left-hand side of \eqref{eq:generic-iso-susy}
when $T_j\rho(\V{u})$ is supersymmetric.

\begin{theorem}
  \label{thm:susy-BFSD}
  Let $[\cdot]_\beta$ be the superexpectation of the $\R^{2|2}$ model, and let $\E_{i,\V{\ell}}$ be the expectation of the SRW.
  Let $g\colon \R^{\Lambda}\to \R$ be a smooth function
  with rapid decay, and let $a,b\in\Lambda$. Then
  \begin{equation}
    \label{eq:susy-BFSD}
    \uspin{x_{a}x_{b}g(\frac{1}{2}|\V{u}|^{2})}_{\beta}
    = \int_{0}^{\infty} \E_{a,0}(g(\V{L}_{t})1_{X_t = b})\, dt.
  \end{equation}
\end{theorem}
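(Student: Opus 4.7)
The plan is to mirror the proof of Theorem~\ref{thm:eucl-dynkin} and then exploit the localisation property that is unique to the supersymmetric setting to simplify the right-hand side. First I would apply Lemma~\ref{lem:gen-SUSY-SRW} with the choices $\rho(\V{u}) = x_a$ and $f(j,\V{\ell}) = g(\V{\ell})\mathbf{1}_{j=b}$. Since $T_j\rho(\V{u}) = \mathbf{1}_{j=a}$ is an ordinary constant (and in particular a form of moderate growth), specialising \eqref{eq:generic-iso-susy} yields
\begin{equation}
\uspin{x_a x_b \, g(\tfrac12 |\V{u}|^2)}_\beta
= \uspin{\int_0^\infty \E_{a,\frac12 |\V{u}|^2}\!\left(g(\V{L}_t) \mathbf{1}_{X_t = b}\right) dt}_\beta.
\end{equation}

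Next, I would use the translation property of the SRW local times noted just below \eqref{e:vector-ell}, $\E_{a,\V{\ell}}(g(X_t,\V{L}_t)) = \E_{a,\V{0}}(g(X_t,\V{\ell}+\V{L}_t))$, to rewrite the integrand as $\int_0^\infty \E_a(g(\tfrac12 |\V{u}|^2 + \V{L}_t) \mathbf{1}_{X_t = b})\, dt$. This exhibits the integrand as a smooth function of $|\V{u}|^2 = (u_i\cdot u_i)_{i\in\Lambda}$, i.e., of the diagonal of $\V{u}\V{u}^T$, so it is of the form $F(\V{u}\V{u}^T)$ to which the localisation lemma applies.

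Finally, I would invoke the localisation identity \eqref{e:localisation-bis}, which asserts that the super-expectation of any smooth rapidly decaying function of $\V{u}\V{u}^T$ equals its value at $\V{u}=\V{0}$. Evaluating at $\V{u} = \V{0}$ sets $\tfrac12|\V{u}|^2 = \V{0}$, which turns $\E_{a,\frac12|\V{u}|^2}$ into $\E_a$ and produces exactly the right-hand side of \eqref{eq:susy-BFSD}.

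The main technical point, and the only non-routine obstacle, is verifying the regularity and decay hypotheses needed to apply both Lemma~\ref{lem:gen-SUSY-SRW} and \eqref{e:localisation-bis} to the time-integrated quantity. One needs $F(\V{u}) \bydef \int_0^\infty \E_a(g(\tfrac12|\V{u}|^2+\V{L}_t)\mathbf{1}_{X_t=b})\, dt$ to be smooth with sufficient decay as a form. Because $\V{L}_t \in [0,\infty)^\Lambda$ is non-decreasing with $L_t^i \to \infty$ in probability for each $i$, and because $g$ has rapid decay, the integrand is bounded uniformly in $t$ by a rapidly decaying function of $|\V{u}|^2$, while for large $t$ the expectation itself decays rapidly by the argument used around \eqref{eq:van}. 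A standard dominated convergence argument then justifies interchanging the time integral with the super-expectation and legitimises applying localisation to $F$.
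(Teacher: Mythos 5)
Your proof is correct and follows the paper's own argument exactly: apply Lemma~\ref{lem:gen-SUSY-SRW} with $\rho(\V{u}) = x_a$ and $f(j,\V{\ell}) = g(\V{\ell})\mathbf{1}_{j=b}$, note that the resulting integrand is a function of $|\V{u}|^2$ and hence supersymmetric, and invoke localisation \eqref{e:localisation-bis}. The additional care you take in verifying that the time-integrated quantity is a smooth rapidly decaying function of $\V{u}\V{u}^T$ is sensible but not a departure from the paper's approach.
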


\begin{proof}
  Apply Lemma~\ref{lem:gen-SUSY-SRW} with $\rho(\V{u}) = x_a$,
  $f(j,\V{\ell}) = g(\V{\ell})1_{j=b}$, and note
  $T_j\rho(\V{u}) = 1_{j=a}$. Thus
  the integrand on the right-hand side of \eqref{eq:generic-iso-susy}
  is a function of $|\V{u}|^2$, and hence is
  supersymmetric. By applying
  localisation, i.e., \eqref{e:localisation-bis}, we conclude
  \begin{equation*}
    \uspin{\int_0^\infty \E_{a,\frac{1}{2}|\V{u}|^{2}} \pa{1_{X_t=b}g(\V{L}_t)}\, dt }_{\beta} = \int_0^\infty \E_{a,0} \pa{1_{X_t=b}g(\V{L}_t)}\, dt.\qedhere
  \end{equation*}
\end{proof}

\begin{remark}
  \label{rem:SUSY-GFF-origin}
  Theorem~\ref{thm:susy-BFSD} has its origins in
  physics~\cite{PhysRevLett.43.744,MR594576,MR660000,MR713539}.
  A formulation similar to
  the one presented here was given in~\cite{MR1100240}, see
  also~\cite{MR941982}.
\end{remark}

The Ray--Knight isomorphism theorem applies to spin models in which
the spin at vertex $a$ is fixed; in the supersymmetric version the
constraint is $u_a=(0,0,0,0)$.  We write the corresponding
unnormalised expectation of an observable $F$ as
\begin{equation}
  [F \delta_{u_0}(u_a)]_\beta.
\end{equation}

\begin{theorem}
  \label{thm:susy-eucl-rk}
  Let $[\cdot]_\beta$ be the superexpectation of the $\R^{2|2}$ model,
  and let $\E_{i,\V{\ell}}$ be the expectation of the SRW.  Let
  $g\colon\R^{\Lambda}\to\R$ be smooth and compactly
    supported, 
  let $a\in\Lambda$, and let $s\in\R$.  Then
  \begin{equation}
    \uspin{ g(\frac{1}{2}|\theta_s\V{u}|^2)\delta_{u_0}(u_a)}_\beta = 
    \E_{a,0}g(\V{L}_{\tau(\frac{s^2}{2})}) 
  \end{equation}
  where $\tau(\gamma) \bydef \inf \{t \,| \,L_a^t \ge \gamma\}$ and $u_0 = (0,0,0,0)$.
\end{theorem}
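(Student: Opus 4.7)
The plan is to follow the proof of the classical Ray--Knight identity (Theorem~\ref{thm:eucl-rk}) line by line, using Lemma~\ref{lem:gen-SUSY-SRW} in place of Lemma~\ref{lem:gen-SRW}, and then to exploit the localisation formula \eqref{e:localisation-bis} to collapse the resulting right-hand side from a superexpectation to a plain SRW expectation.

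Assuming $s\neq 0$ (the case $s=0$ is trivial), I would apply Lemma~\ref{lem:gen-SUSY-SRW} with $\rho(\V{u}) = \rho(u_a)$ a form in the supercoordinates at site $a$, and $f(j,\V{\ell}) = g(\V{\ell})\eta(\ell_a)$, chosen so that $T_a\rho$ and $\eta$ are smooth compactly supported approximations to $\delta_{u_0}(u_a) - \delta_{\theta_s u_0}(u_a)$ and $\delta_{s^2/2}(\ell_a)$ respectively, and such that $\rho(v)\eta(\tfrac{1}{2}|v|^2) \equiv 0$. Following the same manipulations as in the proof of Theorem~\ref{thm:eucl-rk}---sending the regularisation to zero, using $\tau(s^2/2)=0$ when $L_0^a = s^2/2$, and using the translation invariance $[\theta_s F]_\beta = [F]_\beta$ of the $\R^{2|2}$ superexpectation, which follows from \eqref{eq:IBP-SUSY-GFF}---produces the intermediate identity
\begin{equation}
  \uspin{g(\tfrac{1}{2}|\theta_s\V{u}|^2)\delta_{u_0}(u_a)}_\beta
  = \uspin{\E_{a,\tfrac{1}{2}|\V{u}|^2}(g(\V{L}_{\tau(s^2/2)}))\delta_{u_0}(u_a)}_\beta,
\end{equation}
formally identical to the statement of Theorem~\ref{thm:eucl-rk} but now in the $\R^{2|2}$ model.

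To conclude, I would apply localisation to this right-hand side. After the constraint $u_a=0$ imposed by the delta, the integrand over the remaining sites $i\in\Lambda\setminus\{a\}$ is $e^{-H_\beta(\V{u})|_{u_a=0}}\,\E_{a,\tfrac{1}{2}|\V{u}|^2}(g(\V{L}_{\tau(s^2/2)}))$. Both $H_\beta$ and the initial local time vector $\tfrac{1}{2}|\V{u}|^2 = (\tfrac{1}{2}u_i\cdot u_i)_{i\in\Lambda}$ depend on $\V{u}$ only through the scalar products $\V{u}\V{u}^T$, and the SRW paths themselves are independent of $\V{u}$, so the integrand has the form $f(\V{u}\V{u}^T)$. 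Localisation \eqref{e:localisation-bis} then evaluates the superexpectation at $\V{u}=0$, yielding exactly $\E_{a,0}(g(\V{L}_{\tau(s^2/2)}))$, which is the claimed right-hand side.

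The main technical obstacle I anticipate is justifying the use of localisation in the presence of the constraint $\delta_{u_0}(u_a)$: strictly speaking, \eqref{e:localisation-bis} is stated for unconstrained superexpectations over $(\R^{2|2})^\Lambda$, so one must first reinterpret $[\,\cdot\,\delta_{u_0}(u_a)]_\beta$ as a superexpectation on the reduced superspace $(\R^{2|2})^{\Lambda\setminus\{a\}}$, verify that the inherited quadratic action still depends on the remaining supervariables only through their scalar products, and then invoke the localisation formula in that reduced setting. A secondary, more mechanical point is the explicit construction of $\rho$ as an element of $\Omega^2(\R^2)$, but by analogy with the flat case this can be arranged by multiplying a scalar bump against the natural Grassmann factor $\xi_a\eta_a$.
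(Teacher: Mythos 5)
Your proposal is correct and takes essentially the same route as the paper: apply Lemma~\ref{lem:gen-SUSY-SRW} with the same $\rho$ and $f$ as in the flat Ray--Knight proof to obtain the intermediate identity $\uspin{ g(\frac{1}{2}|\theta_s\V{u}|^2)\delta_{u_0}(u_a)}_\beta = \uspin{ \E_{a,\frac{1}{2}|\V{u}|^2}(g(\V{L}_{\tau(s^2/2)}))\delta_{u_0}(u_a)}_{\beta}$, then collapse the right-hand side by supersymmetric localisation \eqref{e:localisation-bis}. Your worry about the constrained superexpectation is legitimate but is handled exactly as you suggest (and as the paper implicitly does): either reduce to $(\R^{2|2})^{\Lambda\setminus\{a\}}$ and localise there, or choose the mollifier $G$ in Appendix~\ref{app:delta} to be supersymmetric, so that $G(\frac{1}{\epsilon}u_a)$ is a function of $u_a\cdot u_a$ with degree-zero value $G_0(0)=1$ at the origin and the full localisation formula applies directly for each $\epsilon>0$.
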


\begin{proof}
  The proof is by applying Lemma~\ref{lem:gen-SUSY-SRW} with
  $\rho(\V{u}) \equiv \rho_{\varepsilon}(u_{a})$,
  $f(j,\V{\ell}) \bydef  g(\V{\ell})\eta_{\varepsilon}(\ell_a)1_{j=a}$, and the form
  $\rho_{\varepsilon} \in \Omega^{2}(\R^{2})$ and function $ \eta_{\varepsilon} \colon\R\to\R$ chosen such that
  $T_{a}\rho_{\varepsilon}$ and $ \eta_{\varepsilon}$ are smooth compactly supported approximations
  to $\delta_{u_0}(u_a)-\delta_{u_0}(\theta_{-s}u_a)$ and $\delta_{\frac{1}{2}s^{2}}$
  subject to $\rho_{\varepsilon}(u_a) \eta_{\varepsilon} (\frac{1}{2}|u_a|^{2})=0$.
  We refer to Appendix~\ref{app:delta} for smooth approximations to $\delta_{u_0}(u_a)$. 
     
  An argument identical to the one in the proof of
  Theorem~\ref{thm:eucl-rk} shows
  \begin{multline}
    \uspin{\delta_{u_0,\varepsilon}(\theta_{-(s-\varepsilon)}u_a)\int_{0}^{\infty}\E_{a,\frac{1}{2}|\V{u}|^2}(g(\V{L}_t)\eta_\varepsilon(L_t^a)1_{X_t = a}\,dt}_\beta
    \\
    =
    \uspin{\delta_{u_0,\varepsilon}(u_a)\int_{0}^{\infty}\E_{a,\frac{1}{2}|\V{u}|^2}(g(\V{L}_t)\eta_\varepsilon(L_t^a)1_{X_t = a})\,dt}_\beta
    .
  \end{multline}
  By choosing $\delta_{u_0,\varepsilon}(u_a)$ to be supersymmetric, i.e., $Q\delta_{u_0,\varepsilon} = 0$, the integrand on the right-hand side is a product of supersymmetric forms and is therefore supersymmetric.  Applying supersymmetric
  localisation (i.e., \eqref{e:localisation-bis}) hence shows
  \begin{equation}
   \uspin{\delta_{u_0,\varepsilon}(\theta_{-(s-\varepsilon)}u_a)\int_{0}^{\infty}\E_{a,\frac{1}{2}|u|^2}(g(\V{L}_t)\eta_\varepsilon(L_t^a)1_{X_t = a})\,dt}_\beta =
   \int_{0}^{\infty}\E_{a,0}(g(\V{L}_t)\eta_\varepsilon(L_t^a)1_{X_t = a})\,dt.
  \end{equation}
  Applying a global translation $\theta_{s-\varepsilon}$ on the left-hand side and then taking $\varepsilon \rightarrow 0$
  as in the proof of Theorem~\ref{thm:eucl-rk} gives the desired result
  \begin{equation*}
    \uspin{g(\frac{1}{2}|\theta_{s}\V{u}|^2)\delta_{u_0}(u_a)}_{\beta} =  \E_{a,0}g(\V{L}_{\tau(\frac{s^2}{2})}).\qedhere
  \end{equation*}
\end{proof}

The preceding two theorems are analogues of the BFS--Dynkin
and Ray--Knight isomorphisms for the SUSY GFF. While calculations
analogous to those leading to the Eisenbaum isomorphism can be
carried out for the SUSY GFF, it is not possible to apply
localisation, because the form $\frac{1}{2}|\theta_{s}\V{u}|^{2}$
that arises (recall~\eqref{eq:EIS-g}) is not supersymmetric.

\subsection{SUSY hyperbolic model $\HH^{2|2}$}
\label{sec:susy-hyperb-model}

In this section we introduce the supersymmetric analogue of
the $\HH^{2}$ model, and then obtain the 
associated isomorphism theorems.

\subsubsection{Super-Minkowski space $\R^{3|2}$ and the super-Minkowski model}
\label{sec:super-min}

Let $(\xi_{i},\eta_{i})_{i\in\Lambda}$ be the generators of the
Grassmann algebra $\Omega^{2\Lambda}$. The algebra of observables
$\Omega^{2\Lambda}(\R^{3\Lambda})$ is 
the algebra generated by $(\xi_{i},\eta_{i})_{i\in\Lambda}$ with
coefficients in $C^{\infty}(\R^{3\Lambda})$. Choosing orthonormal
coordinates $(z_i,x_i,y_i)_{i \in \Lambda}$ for $\R^{3\Lambda}$, a
supervector $u_i \in \R^{3|2}$ is a tuple of even and odd coordinates
$u_i = (z_i,x_i,y_i, \xi_i,\eta_i)$, and we say that $\R^{3|2}$ is a
\emph{super-Minkowski space} when equipped with the `inner product'
\begin{equation} 
  \label{eq:hypprod}
  u_i\cdot u_j \bydef -z_iz_j + x_ix_j + y_iy_j - \xi_i\eta_j + \eta_i\xi_j.
\end{equation}
We have written `inner product' to emphasise that $u_{i}\cdot
u_{j}$ is a form, and hence this is not an
inner product in the standard sense of the term.

\subsubsection{$\HH^{2|2}$ sigma model}
\label{sec:H22int}

To define a supersymmetric analogue of $\HH^{2}$, define the even form
\begin{equation}
  \label{eq:zform}
  z = z(x,y,\xi,\eta) \bydef \sqrt{1+x^{2}+y^{2} - 2\xi\eta} 
  = {\sqrt{1+x^{2}+y^{2}}}
  - \frac{\xi\eta}{\sqrt{1+x^{2}+y^{2}}}.
\end{equation}

Using the definition \eqref{eq:hypprod}, a short calculation shows that
$u_{i}\cdot u_{i} = -1$, just as for $\HH^{2}$.
The algebra of forms $\Omega^{2}(\HH^{2})$ is the algebra over $C^{\infty}(\HH^{2})$ generated by two Grassmann generators $\xi$ and $\eta$.
In coordinates, we have $F(u) = F(z,x,y,\xi,\eta) = F(\sqrt{1+x^{2}+{y}^{2} - 2{\xi}{\eta}}, x,y,\xi,\eta)$,
and hence every form $F\in \Omega^{2}(\HH^{2})$ can be identified with a form in $\Omega^{2}(\R^{2})$.
Using this correspondence 
we define the Berezin integral for $F\in \Omega^{2}(\HH^{2})$ as
\begin{equation}
  \label{eq:H22int}
  \int_{\HH^{2|2}}F \bydef \int_{\R^{2|2}} \frac{1}{z} F = \frac{1}{2\pi}
    \int dx\,dy\, \partial_{\xi}\, \partial_{\eta}  \frac{1}{z} F
\end{equation}
where on the right-hand side
we are viewing $F$ as a form in $\Omega^2(\R^{2})$. Similarly,
\begin{equation}
  \label{eq:H22-int-L}
  \int_{(\HH^{2|2})^{\Lambda}}F \bydef
  \int_{(\R^{2|2})^{\Lambda}} \frac{1}{\prod_{i\in\Lambda} z_{i}} F
  =
  \frac{1}{(2\pi)^{|\Lambda|}} \int\, d\V{x}\,d\V{y}\, \partial_{\V{\eta}}\,\partial_{\V{\xi}}\;\frac{1}{\prod_{i\in\Lambda} z_{i}}F
\end{equation}
where we note there is no ambiguity in the product of the $z_{i}$ as they
are even forms.

Define, for $\V{h}\geq \V{0}$,
\begin{equation}
  \label{eq:H22-def}
  H_{\beta}(\V{u}) \bydef \frac{1}{2}(\V{u},-\Delta_{\beta}\V{u}),
  \quad H_{\beta,h}(\V{u}) \bydef H_{\beta}(\V{u}) + (\V{h},\V{z}-\V{1}), 
\end{equation}
where
\begin{equation}
  \label{eq:H22-Lap}
  \begin{split}
    (\V{u},-\Delta_{\beta}\V{u}) 
    &\bydef
    \frac{1}{2}\sum_{i,j\in\Lambda} \beta_{ij}( u_{i}\cdot u_{i} + u_{j}\cdot u_{j} - u_{i}\cdot u_{j} - u_j \cdot u_i)
    =
    \frac{1}{2}\sum_{i,j\in\Lambda} \beta_{ij}( -2 - 2 u_{i}\cdot u_{j}),
    \\
    (\V{h},\V{z}-\V{1}) &\bydef \sum_{i\in\Lambda}h_{i}(z_{i}-1),
    \end{split}
\end{equation}
and each $u_{i}\cdot u_{j}$ is defined as in \eqref{eq:hypprod}.
The equality in the first line holds because $u_i\cdot u_i =-1$.
We define the \emph{$\HH^{2|2}$ model} superexpectation for $F\in \Omega^{2\Lambda}(\HH^{2\Lambda})$ by
\begin{equation}
    \label{eq:SGFF-Hyp}
  \uspin{F}_{\beta,h}\bydef
  \int_{(\HH^{2|2})^{\Lambda}}F e^{-H_{\beta,h}},
\end{equation}
and we write $\cb{F}_{\beta}$ in the case $\V{h}=\V{0}$.
For $\V{h} \neq \V{0}$, the superexpectation is normalised, i.e.,
$\uspin{1}_{\beta,h}=1$. This is a consequence of supersymmetry, 
see \eqref{eq:h22loc} below.

\subsubsection{Symmetries}
There are two symmetries necessary for what follows, and we
introduce them in this section. For a further discussion of the
Lie superalgebra of symmetries associated to the $\HH^{2|2}$ model see
Appendix~\ref{sec:susy-symmetries}.

The first relevant symmetry is the infinitesimal Lorentz boost
in the $xz$ plane at $i\in\Lambda$:
\begin{equation}
  \label{eq:H22-boost}
  T_i \bydef z_i \ddp{}{x_i} = \sqrt{1+x^2+y^2 - 2\xi\eta}\ddp{}{x_i},
\end{equation}
which acts on coordinates as
\begin{equation}
T_iz_j = x_j1_{i=j},\quad T_ix_j = z_j1_{i=j},\quad  T_iy_j = 0,\quad
T_i\xi_j = 0,\quad T_i\eta_j = 0 \qquad i,j\in\Lambda.
\end{equation}
As for the SUSY GFF, this leads to a Ward identity
for forms $F$ with rapid decay:
\begin{equation}
  \label{eq:IBP-SUSY-H22}
  \int_{(\HH^{2|2})^{\Lambda}} (T_{i}F) = 0  .
\end{equation}
For $s\in\R$ the finite symmetry associated to $\sum_{i\in\Lambda} T_i$ will be denoted
$\theta_s$, and acts as (for $j\in\Lambda$)
\begin{equation}
\label{eq:h22-boost}
\theta_s z_j = z_j\cosh{s} + x_j\sinh{s},\;\; \theta_sx_j =
x_j\cosh{s} + z_j\sinh{s},\;\;  \theta_sy_j = y_j,\;\; \theta_s\xi_j
= \xi_j,\;\; \theta_s\eta_j = \eta_j. 
\end{equation}

The second relevant symmetry is the supersymmetry generator $Q$, which
is defined by \eqref{eq:susy-gen}.  Note that $z_i$ can be written as
$z_i = \sqrt{1+ |\tilde u_i|^2}$, where
$\tilde u_i \bydef (x_i,y_i,\xi_i,\eta_i) \in \R^{2|2}$.  Thus,
$z_{i}$ is supersymmetric, i.e., $Qz_{i}=0$. This implies the same
localisation Ward identity applies for $\HH^{2|2}$ as for $\R^{2|2}$,
i.e., for smooth functions
$f\colon \R^{\Lambda}\times \R^{\Lambda\times\Lambda}\to \R$ with
sufficient decay,
\begin{equation}
  \label{eq:h22loc}
  \int_{(\HH^{2|2})^{\Lambda}} f(\V{z},\V{\tilde u}\V{\tilde u}^T) = f(\V{1},\V{0})
\end{equation}
where $\V{0}$ is the matrix indexed by $\Lambda$ with all entries $0$, and we
have written $\V{\tilde u}\V{\tilde u}^{T}$ to denote the set of
forms $(\tilde u_{i}\cdot \tilde u_{j})_{i,j\in\Lambda}$.

\subsubsection{Isomorphism theorems for the $\HH^{2|2}$ model}
\label{sec:isom-theor-hh22}

Let $\E_{i,\V{\ell}}$ denote the expectation for a VRJP started from
initial conditions $(i,\V{\ell})$. We begin with the SUSY analogue of
Lemma~\ref{lem:gen-hyp}.
\begin{lemma}
  \label{lem:gen-SUSY-H22}
  Let $[\cdot]_\beta$ be the superexpectation of the $\HH^{2|2}$ model, and let $\E_{i,\V{\ell}}$ be the expectation of the VRJP.
  Let $f\colon \Lambda\times \R^{\Lambda} 
  \to \R$ be a smooth function with rapid decay, and let
  $\rho\in \Omega^{2\Lambda}(\HH^{2\Lambda})$ have moderate growth.
  Then:
  \begin{equation}
    \label{eq:gen-lem-H22}
    -\sum_{j\in\Lambda}\cb{\rho(\V{u})x_{j} \cL f(j,\V{z})}_{\beta}
    =
    \sum_{j\in\Lambda} \cb{(T_j\rho)(\V{u})f(j,\V{z})}_{\beta}
    .
  \end{equation}
  In particular, the following integrated version holds for all
  smooth $f\colon\Lambda\times \R^{\Lambda} 
  \to\R$ with rapid decay:
  \begin{equation}
    \label{eq:generic-iso-susy-H22}
    \sum_{j\in\Lambda}\uspin{\rho(\V{u})x_{j} f(j, \V{z})}_{\beta}
    =
    \sum_{j\in\Lambda} \uspin{(T_j\rho)(\V{u})\int_0^\infty \E_{j,\V{z}}(f(X_t,\V{L}_t))\,dt}_{\beta}
    .
  \end{equation}
\end{lemma}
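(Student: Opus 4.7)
The plan is to imitate the proof of Lemma~\ref{lem:gen-hyp} verbatim, substituting the super-Ward identity \eqref{eq:IBP-SUSY-H22} for the bosonic identity \eqref{eq:IBP-hyp}. The crucial observation is that although $T_i$ is a derivation on the full form algebra $\Omega^{2\Lambda}(\HH^{2\Lambda})$, its action on the even Minkowski coordinates is identical to the bosonic case: $T_i z_j = x_j \mathbf{1}_{i=j}$, $T_i x_j = z_j \mathbf{1}_{i=j}$, while $T_i$ annihilates $y_j,\xi_j,\eta_j$. Consequently the whole computation takes place in the $(\V{x},\V{z})$ sector and inherits the hyperbolic algebra of the bosonic proof.

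First I would record the integration by parts identity
\begin{equation*}
  [(T_i f_1)f_2]_\beta = [f_1 (T_i^\star f_2)]_\beta,
  \qquad T_i^\star f \bydef -T_i f + (T_i H_\beta)f,
\end{equation*}
which follows from \eqref{eq:IBP-SUSY-H22} applied to $f_1 f_2 e^{-H_\beta}$ and the Leibniz rule. The decay and moderate growth hypotheses on $f,\rho$ are exactly those needed to ensure the finitely many coefficient forms of the Berezin integrand are rapidly decaying in the bosonic variables. Using \eqref{eq:H22-Lap} together with $T_i z_j=x_j\mathbf{1}_{i=j}$, $T_i x_j = z_j \mathbf{1}_{i=j}$, the same calculation as \eqref{eq:gen-hyp-6} gives
\begin{equation*}
  T_i H_\beta(\V{u}) = \sum_{j\in\Lambda}\beta_{ij}(x_i z_j - x_j z_i).
\end{equation*}
Applying $T_i^\star$ to $f(i,\V{z})$, using the chain rule $T_i f(i,\V{z}) = x_i\,\partial_{\ell_i} f(i,\V{z})$ (only the $z_i$ slot is affected), and summing over $i$ recovers the VRJP generator:
\begin{equation*}
  -\sum_{i\in\Lambda} T_i^\star f(i,\V{z})
  = \sum_{i\in\Lambda} x_i \pb{\sum_{j\in\Lambda}\beta_{ij}z_j(f(j,\V{z})-f(i,\V{z})) + \partial_{\ell_i}f(i,\V{z})}
  = \sum_{i\in\Lambda} x_i (\cL f)(i,\V{z}),
\end{equation*}
by \eqref{eq:VRJP-generator}. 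Multiplying by $\rho$ and invoking the integration by parts identity above proves \eqref{eq:gen-lem-H22}.

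For the integrated version \eqref{eq:generic-iso-susy-H22}, I would set $f_t(i,\V{\ell})\bydef\E_{i,\V{\ell}}f(X_t,\V{L}_t)$, which is smooth with rapid decay in $\V{\ell}$ since $f$ is, apply \eqref{eq:gen-lem-H22} to $f_t$, rewrite the left-hand side using Kolmogorov's backward equation $\cL f_t = \partial_t f_t$, and integrate over $t\in(0,\infty)$. The boundary term at $t=\infty$ vanishes by the same dominated convergence argument as in Lemma~\ref{lem:gen-SRW}, using that $L_t^j\to\infty$ in probability for the VRJP (cited below \eqref{eq:VRJP}) and the rapid decay of $f$. I expect no genuine obstacle: the only real point of care is that the Berezin integral is a \emph{finite} linear combination of Lebesgue integrals against smooth coefficient forms, so Fubini and dominated convergence apply to each coefficient and the analytic content of the proof is identical to the bosonic Lemma~\ref{lem:gen-hyp}.
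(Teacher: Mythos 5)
Your proposal is correct and follows exactly the route the paper takes: the paper's proof is the one-line statement that the argument is identical to Lemma~\ref{lem:gen-hyp}, and your write-up is a faithful expansion of that, correctly identifying the super-Ward identity \eqref{eq:IBP-SUSY-H22} as the replacement for \eqref{eq:IBP-hyp}, the fact that $T_i$ acts on $(\V{x},\V{z})$ exactly as in the bosonic case, and the unchanged analytic steps (Kolmogorov's backward equation, vanishing boundary term). No gaps.
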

\begin{proof}
The proof is identical to that of Lemma~\ref{lem:gen-hyp}.
\end{proof}

The SUSY analogue of Theorem~\ref{thm:BHS} is the following.
\begin{theorem}
  \label{thm:SUSY-BHS}
  Let $[\cdot]_\beta$ be the superexpectation of the $\HH^{2|2}$
  model, and let $\E_{i,\V{\ell}}$ be the expectation of the VRJP.
  Let $g\colon \Lambda\times\R^{\Lambda}\to \R$ be a smooth function
  with rapid decay, and let $a,b\in\Lambda$. Then
  \begin{equation}
    \label{eq:H22-iso}
    \uspin{x_{a}x_{b}g(\V{z})}_{\beta} =
    \int_{0}^{\infty}\E_{a,\V{1}}(g(\V{L}_{t})1_{X_t = b})\,dt.
  \end{equation}
\end{theorem}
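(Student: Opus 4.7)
The plan is to follow exactly the template used for Theorem~\ref{thm:susy-BFSD}, combining the Ward identity of Lemma~\ref{lem:gen-SUSY-H22} with the localisation identity \eqref{eq:h22loc}. The choices of test form and test function should mirror those in the proof of Theorem~\ref{thm:BHS}, i.e., $\rho(\V{u}) \bydef x_a$ (viewed now as an element of $\Omega^{2\Lambda}(\HH^{2\Lambda})$) and $f(j,\V{\ell}) \bydef g(\V{\ell}) 1_{j=b}$. Then by \eqref{eq:H22-boost} one has $(T_j\rho)(\V{u}) = z_a\, 1_{j=a}$, where $z_a$ is the even form from \eqref{eq:zform}.

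Plugging these into the integrated identity \eqref{eq:generic-iso-susy-H22} immediately yields
\begin{equation*}
\uspin{x_a x_b\, g(\V{z})}_\beta = \uspin{z_a \int_0^\infty \E_{a,\V{z}}\bigl(g(\V{L}_t) 1_{X_t=b}\bigr)\,dt}_\beta.
\end{equation*}
The decisive feature of the supersymmetric setting is that the right-hand integrand depends on $\V{u}$ only through the supersymmetric form $\V{z}$, since $Q z_i = 0$ (as noted below \eqref{eq:h22loc}). Consequently the whole integrand is of the form $F(\V{z})$, to which the localisation identity \eqref{eq:h22loc} applies, evaluating it at $\V{z}=\V{1}$. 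Since $z_a|_{\V{z}=\V{1}}=1$, this collapses the superexpectation to
\begin{equation*}
\int_0^\infty \E_{a,\V{1}}\bigl(g(\V{L}_t)1_{X_t=b}\bigr)\,dt,
\end{equation*}
which is precisely \eqref{eq:H22-iso}.

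The only point requiring care, rather than an actual obstacle, is the regularity hypothesis for the Ward identity: one must check that $g_t(j,\V{\ell}) \bydef \E_{j,\V{\ell}}(g(X_t,\V{L}_t)\,1_{X_t=b})$ is smooth with rapid decay in $\V{\ell}$ uniformly enough to justify swapping the time integral with the superexpectation, and that the boundary term at $t=\infty$ vanishes. For the VRJP both are handled exactly as in the proof of Lemma~\ref{lem:gen-hyp}: smoothness follows from the construction in \cite{MR2027294} cited below \eqref{eq:VRJP-generator}, the rapid-decay propagation is immediate from $g_t(j,\V{\ell}) = \E_{j,\V{0}}(g(X_t, \V{\ell}+\V{L}_t)\,1_{X_t=b})$, and the vanishing at infinity uses $L^j_t \to \infty$ in probability. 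No new supersymmetric subtlety arises beyond the clean application of \eqref{eq:h22loc}, which is what makes the initial data collapse from the random $\V{z}$ to the deterministic $\V{1}$ and recovers the clean right-hand side.
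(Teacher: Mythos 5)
Your proposal is correct and follows essentially the same approach as the paper: apply Lemma~\ref{lem:gen-SUSY-H22} with $\rho(\V{u})=x_a$ and $f(j,\V{\ell})=g(\V{\ell})1_{j=b}$, compute $T_j\rho = z_a 1_{j=a}$, and then use the localisation identity~\eqref{eq:h22loc} to collapse the superexpectation at $\V{z}=\V{1}$. The extra paragraph on regularity and the vanishing boundary term is sound and consistent with how the paper handles these points in Lemmas~\ref{lem:gen-SRW} and~\ref{lem:gen-hyp}.
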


\begin{proof}
  Apply Lemma~\ref{lem:gen-SUSY-H22} with $\rho(\V{u}) = x_a$
  and $f(j,\V{\ell}) = g(\V{\ell})1_{j=b}$. Thus
  $T_j\rho(\V{u}) = 1_{j=a}z_a$. By applying localisation, i.e.,
  \eqref{eq:h22loc}, we obtain
  \begin{equation*}
    \uspin{x_{a}x_{b}g(\V{z})}_{\beta} =\uspin{z_a \int_{0}^{\infty}\E_{a,\V{z}}(g(\V{L}_{t})1_{X_t = b})\,dt}_{\beta} = \int_{0}^{\infty}\E_{a,\V{1}}(g(\V{L}_{t})1_{X_t = b})\,dt.\qedhere
  \end{equation*}
\end{proof}

\begin{theorem} \label{thm:H22-rk}
  Let $[\cdot]_\beta$ be the superexpectation of the $\HH^{2|2}$ model, and let $\E_{i,\V{\ell}}$ be the expectation of the VRJP.
  Let $g\colon\R^{\Lambda}\to\R$ be a smooth compactly supported function,
  let $a\in\Lambda$, and let $s\in\R$.  Then
  \begin{equation}\label{eq-susy-h22-rk}
    \uspin{ g(\theta_s\V{z})\delta_{u_0}(u_a)}_\beta =   \E_{a,\V{1}}g(\V{L}_{\tau(\cosh{s})})
  \end{equation}
  where $\tau(\gamma) = \inf \{t \,| \,L_a^t \ge \gamma\}$ and $u_0 = (1,0,0,0,0)$.
\end{theorem}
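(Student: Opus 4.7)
The plan is to follow the strategy of Theorem~\ref{thm:hyp-rk}, using the supersymmetric integration by parts from Lemma~\ref{lem:gen-SUSY-H22} in place of Lemma~\ref{lem:gen-hyp}, and then to collapse the resulting superexpectation via the localisation identity \eqref{eq:h22loc}, in the same way that Theorem~\ref{thm:susy-eucl-rk} simplifies its classical analogue Theorem~\ref{thm:eucl-rk}.

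Assume $s\neq 0$ (the identity is trivial for $s=0$). First, I would apply Lemma~\ref{lem:gen-SUSY-H22} with $\rho(\V{u})=\rho(u_a)\in\Omega^{2}(\HH^{2|2})$ depending only on $u_a$, and $f(j,\V{\ell})=g(\V{\ell})\,\eta(\ell_a)$, where smooth compactly supported $\rho$ and $\eta\colon\R\to\R$ are chosen so that $T_a\rho$ approximates $\delta_{u_0}(u_a)-\delta_{\theta_s u_0}(u_a)$, $\eta$ approximates $\delta_{\cosh s}(\ell_a)$, and $\rho(u_a)\eta(z_a)=0$ for all $u_a$. SUSY delta approximations are supplied by Appendix~\ref{app:delta}, in analogy with the explicit construction used in the proof of Theorem~\ref{thm:eucl-rk}. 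Since $\rho(u_a)\eta(z_a)=0$, the left-hand side of \eqref{eq:gen-lem-H22} vanishes, and only the $j=a$ term survives on the right-hand side of the integrated version \eqref{eq:generic-iso-susy-H22}. Letting the approximations converge and using $\int_0^\infty g(\V{L}_t)\,\delta_{\cosh s}(L_t^a)\,dt = g(\V{L}_{\tau(\cosh s)})$, I arrive at
\begin{equation}
\uspin{\E_{a,\V{z}}\,g(\V{L}_{\tau(\cosh s)})\,\delta_{u_0}(u_a)}_{\beta}
= \uspin{g(\V{z})\,\delta_{\theta_s u_0}(u_a)}_{\beta},
\end{equation}
where on the support of $\delta_{\theta_s u_0}(u_a)$ one has $z_a=\cosh s$, so $\tau(\cosh s)=0$ and the starting local time already equals $\V{z}$. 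Invoking the $\theta_s$-invariance of $[\cdot]_\beta$ (cf.\ \eqref{eq:h22-boost}) rewrites the right-hand side as $\uspin{g(\theta_s\V{z})\,\delta_{u_0}(u_a)}_\beta$, which is exactly the left-hand side of \eqref{eq-susy-h22-rk}.

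To finish, I would apply the localisation identity \eqref{eq:h22loc} to the superexpectation on the left of the display above: its integrand depends on $\V{u}$ only through the $Q$-invariant vector $\V{z}$, and the constraint imposed by $\delta_{u_0}(u_a)$ is precisely the supersymmetric fixed point at the $a$-th site, where $\tilde u_a=0$ and $z_a=1$. Localisation thus reduces the superexpectation to the evaluation of $\E_{a,\V{z}}g(\V{L}_{\tau(\cosh s)})$ at $\V{z}=\V{1}$, namely $\E_{a,\V{1}}g(\V{L}_{\tau(\cosh s)})$, which combined with the previous step proves \eqref{eq-susy-h22-rk}.

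The main obstacle is justifying this localisation step cleanly in the presence of $\delta_{u_0}(u_a)$, which is not itself $Q$-invariant. The correct interpretation is that $\delta_{u_0}(u_a)$ sets $u_a=u_0$ in the action and eliminates the $a$-th integration, leaving a reduced superintegral over $\{u_i\}_{i\ne a}$ that is invariant under the residual supersymmetry $\sum_{i\ne a}Q_i$; applying localisation to the reduced system delivers the evaluation at $\V{z}=\V{1}$ (with $z_a=1$ already enforced by the delta). This is the same move used in Theorem~\ref{thm:susy-eucl-rk} and should go through verbatim. The remaining technical point — constructing smooth SUSY approximations to $\delta_{u_0}(u_a)$, $\delta_{\theta_s u_0}(u_a)$, and $\delta_{\cosh s}(\ell_a)$ compatible with $\rho(u_a)\eta(z_a)=0$ — is routine given Appendix~\ref{app:delta}.
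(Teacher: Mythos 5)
Your proposal is correct and mirrors the paper's proof exactly: apply Lemma~\ref{lem:gen-SUSY-H22} with the same choices of $\rho$ and $f$, run the argument of Theorem~\ref{thm:hyp-rk} to obtain $\uspin{g(\theta_s\V{z})\delta_{u_0}(u_a)}_\beta = \uspin{\E_{a,\V{z}}g(\V{L}_{\tau(\cosh s)})\delta_{u_0}(u_a)}_\beta$, then apply supersymmetric localisation. Your extra discussion of why localisation is compatible with the presence of $\delta_{u_0}(u_a)$ (namely, fixing $u_a=u_0$ leaves an integrand depending only on $(z_i)_{i\ne a}$, which is supersymmetric for the residual $\sum_{i\ne a}Q_i$) is a correct elaboration of the step the paper leaves implicit.
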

\begin{proof} Applying Lemma~\ref{lem:gen-SUSY-H22} with
  $\rho(\V{u}) \equiv \rho_{\varepsilon}(u_{a})$,
  $f(j,\V{\ell}) \bydef g(\V{\ell})\eta_{\varepsilon}(\ell_a)1_{j=a}$, and the form
  $\rho_{\varepsilon} \in \Omega^{2}(\HH^{2})$ and function $\eta_{\varepsilon} \colon\R_+\to\R$
  chosen such that $T_{a}\rho_\varepsilon$ and $\eta_\varepsilon$ are smooth compactly
  supported approximations to
  $\delta_{u_0}(u_a)-\delta_{\theta_{s}u_0}(u_a)$ and
  $\delta_{\cosh{s}}$ subject to $\rho_{\varepsilon}(u_a)\eta_\varepsilon(z_a)=0$, an argument
  identical to the proof of Theorem~\ref{thm:hyp-rk} shows

  \begin{equation}
  \uspin{\delta_{u_0,\varepsilon}(\theta_{-(s-\varepsilon)}u_a)\int_{0}^{\infty}\E_{a,\V{z}}(g(\V{L}_t)\eta_\varepsilon(L_t^a)1_{X_t = a})\,dt}_\beta =
  \uspin{\delta_{u_0,\varepsilon}(u_a)\int_{0}^{\infty}\E_{a,\V{z}}(g(\V{L}_t)\eta_\varepsilon(L_t^a)1_{X_t = a})\,dt}_\beta.
  \end{equation}
  As in the proof of Theorem~\ref{thm:susy-eucl-rk}, $\delta_{u_0,\varepsilon}(u_a)$ is chosen to be supersymmetric.
  The claim follows by applying localisation to the right-hand side, boosting the left-hand side by $\theta_{s-\varepsilon}$,
  and then taking $\varepsilon \rightarrow 0$ as in the proof of Theorem~\ref{thm:hyp-rk}:
  \begin{equation*}
    \uspin{\E_{a,\V{z}}g(\bm{L}_{\tau(\cosh{s})})\delta_{u_0}(u_a)}_\beta = \E_{a,\V{1}}g(\bm{L}_{\tau(\cosh{s})}).\qedhere
  \end{equation*}
\end{proof}

\begin{remark}
  \label{rem:H22-history} 
  The $\HH^{2|2}$ model was introduced in~\cite{MR1134935}; it serves
  as a toy model for Efetov's supersymmetric approach to studying
  random band matrices~\cite{MR708812}. The connection between random
  band matrices and hyperbolic symmetry goes back to Wegner and
  Sch\"{a}fer~\cite{MR600875,MR575503}, and Efetov made use of
  supersymmetry to avoid the use of the replica trick. For further
  discussion see~\cite{MR2728731}, and for other uses of
  supersymmetry in the study of random matrices see,
  e.g., \cite{MR2114358,MR3824956,DisertoriLohmannSodin}.
\end{remark}

\begin{remark}
\label{rem:H22-phenom}
  Unlike the $\HH^{n}$ models, the $\HH^{2|2}$ model captures the
  phenomenology of a localisation/delocalisation
  transition~\cite{MR2728731,MR2953867}.
\end{remark}

\subsection{SUSY hemispherical model $\bbS^{2|2}_{+}$}
\label{sec:susy-hemisph-model}

In this section we introduce the supersymmetric analogue of
the $\bbS^{2}_{+}$ model, and then obtain the associated isomorphism
theorems.

\subsubsection{Integrals over $\bbS_{+}^{2|2}$}
\label{sec:S22int}

In this subsection we work with smooth compactly supported forms in
$\Omega^{2\Lambda}(\bbS_+^{2\Lambda})$, which we denote
$\Omega^{2\Lambda}_{c}(\bbS_+^{2\Lambda})$.  Concretely, we will
identify such forms with compactly supported forms in
$\Omega^{2\Lambda}(\bbB^{2\Lambda})$, where $\bbB^2$ is the open unit
ball, by setting 
\begin{equation}
  \label{eq:zformS}
  z = z(x,y,\xi,\eta) \bydef \sqrt{1-x^{2}-y^{2} + 2\xi\eta} 
  = \sqrt{1-x^{2}-y^{2}}
  + \frac{\xi\eta}{\sqrt{1-x^{2}-y^{2}}},
\end{equation}
By considering $\bbB^2$ as a subset of $\R^{2}$, a compactly supported
form in $\Omega^{2\Lambda}(\bbB^{2\Lambda})$ can be trivially extended
to a form in $\Omega^{2\Lambda}(\R^{2\Lambda})$, and we may therefore
apply the results of Appendix~\ref{sec:SUSY-intro}.

Similarly to the notation introduced in Section~\ref{sec:H22int},
let $u_{i}=(z_{i},x_{i},y_{i},\xi_{i},\eta_{i})$, and let
\begin{equation}
  \label{eq:hemprod}
  u_{i}\cdot u_{j} \bydef z_{i}z_{j} +  x_{i}x_{j} + y_{i}y_{j}  -
  \xi_{i}\eta_{j} + \eta_{i}\xi_{j}, \qquad i,j\in\Lambda.
\end{equation}
With these definitions, $u_{i}\cdot u_{i} = 1$, just as for
$\bbS^{2}_{+}$. We define, for $F\in \Omega^{2}_c(\bbS^{2}_+)$,
\begin{equation}
  \label{eq:S22int}
  \int_{\bbS^{2|2}_{+}}F \bydef 
  \frac{1}{2\pi} \int dx\,dy\, \partial_{\xi}\, \partial_{\eta}  \frac{1}{z} F,
\end{equation}
and similarly, for $F\in\Omega^{2\Lambda}_{c}(\bbS^{2\Lambda}_{+})$,
\begin{equation}
  \label{eq:S22-int-L}
  \int_{(\bbS^{2|2}_{+})^{\Lambda}}F \bydef 
  \frac{1}{(2\pi)^{|\Lambda|}} \int d\V{x}\,d\V{y}\, \partial_{\V{\xi}}\, \partial_{\V{\eta}}\frac{1}{\prod_{i\in\Lambda} z_{i}} F,
\end{equation}
where we note there is no ambiguity in the product of the $z_{i}$ as
they are even forms.

\subsubsection{$\bbS^{2|2}_+$ model}
\label{sec:model-S22}

Define, for $\V{h}\geq \V{0}$,
\begin{equation}
  \label{eq:S22-def}
  H_{\beta}(\V{u}) \bydef \frac{1}{2}(\V{u},-\Delta_{\beta}\V{u}),
  \quad H_{\beta,h}(\V{u}) \bydef H_{\beta}(\V{u}) + (\V{h},\V{1}-\V{z}) ,
\end{equation}
where
\begin{equation}
  \label{eq:S22-Lap}
  \begin{split}
    (\V{u},-\Delta_{\beta}\V{u}) 
    &\bydef \frac{1}{2}\sum_{i,j\in\Lambda} \beta_{ij}( u_{i}\cdot u_{i} +
    u_{j}\cdot u_{j} - u_{i}\cdot u_{j} - u_j \cdot u_i)
    =
    \frac{1}{2}\sum_{i,j\in\Lambda} \beta_{ij}(2 - 2 u_{i}\cdot u_{j})
    , \\
     (\V{h},\V{1}-\V{z})  & \bydef\sum_{i\in\Lambda}h_{i}(1-z_{i}) \\
    \end{split}
\end{equation}
and $u_{i}\cdot u_{j}$ is defined as in \eqref{eq:hemprod}.  We
define the \emph{$\bbS^{2|2}_{+}$ model} superexpectation of
$F\in \Omega^{2\Lambda}_c(\bbS^{2\Lambda}_+)$ by
\begin{equation}
    \label{eq:SGFF-Hem}
	\uspin{F}_\beta
  \bydef \int_{(\bbS^{2|2}_{+})^{\Lambda}}F e^{-H_{\beta}}, \quad
  \uspin{F}_{\beta,h}\bydef
  \int_{(\bbS^{2|2}_{+})^{\Lambda}}F e^{-H_{\beta,h}}.
\end{equation}

\subsubsection{Symmetries}
As in the previous sections, there are two
symmetries of relevance to the following discussion.
For details on the Lie superalgebra associated to
$\bbS^{2|2}_{+}$, see Appendix~\ref{sec:susy-symmetries}.
The first symmetry of relevance is an
infinitesimal rotation in the $xz$-plane at $i\in\Lambda$, which has
generator
\begin{equation}
T_i \bydef z_i \ddp{}{x_i} = \sqrt{1-x_i^2-y_i^2+2\xi_i\eta_i}\ddp{}{x_i},
\end{equation}
and acts on coordinates as
\begin{equation}
T_iz_j = -x_j1_{i=j},\quad T_ix_j = z_j1_{i=j},\quad  T_iy_j = 0,\quad
T_i\xi_j = 0,\quad T_i\eta_j = 0, \qquad i,j\in\Lambda.
\end{equation}
As for the SUSY GFF, this leads to a Ward identity
for all sufficiently rapidly decaying forms $F$:
\begin{equation}
  \label{eq:IBP-SUSY-S22}
  \int_{(\bbS^{2|2}_{+})^{\Lambda}} (T_{i}F) = 0  .
\end{equation}
For $s\in\R$ the finite rotation associated to $\sum_{i\in\Lambda} T_i$ is denoted
$\theta_s$, and acts as, for $j\in\Lambda$,
\begin{equation}
\theta_s z_j = z_j\cos{s} - x_j\sin{s},\quad \theta_sx_j = x_j\cos{s} + z_j\sin{s},\quad  \theta_sy_j = y_j,\quad \theta_s\xi_j = \xi_j,\quad \theta_s\eta_j = \eta_j.
\end{equation}

The second symmetry of importance is the supersymmetry generator
$Q$ defined by \eqref{eq:susy-gen}.  Note that $z_i$ can be written as
$z_i = \sqrt{1- |\tilde u_i|^2}$, where
$\tilde u_i \bydef (x_i,y_i,\xi_i,\eta_i) \in \R^{2|2}$. It follows
that $z_i$ is supersymmetric, i.e., $Qz_i=0$. This implies the
same localisation Ward identity applies for $\bbS^{2|2}_+$ as for
$\R^{2|2}$, i.e., for $f\colon (0,1]^{\Lambda}\times
[-1,1]^{\Lambda\times\Lambda}\to \R$ that are smooth and compactly supported, 
\begin{equation}
  \label{eq:s22loc}
  \int_{(\bbS^{2|2}_{+})^{\Lambda}} f(\V{z},\V{\tilde  u}\V{\tilde u}^T) =  f(\V{1},\V{0}),
\end{equation}
where $\V{0}$ is the matrix indexed by $\Lambda$ with all entries $0$
and $\V{\tilde u}\V{\tilde u}^{T} \bydef (\tilde u_{i}\cdot \tilde u_{j})_{i,j\in\Lambda}$.

\subsubsection{Isomorphism theorems for the $\bbS^{2|2}_{+}$ model}
\label{sec:isom-theor-S22}

Let $\E_{i,\V{\ell}}$ denote the expectation for a VDJP started from
initial conditions $(i,\V{\ell})\in\Lambda\times(0,1]^{\Lambda}$. 
\begin{lemma}
  \label{lem:gen-SUSY-S22}
  Let $[\cdot]_\beta$ be the superexpectation of the $\bbS_+^{2|2}$ model, and let $\E_{i,\V{\ell}}$ be the expectation of the VDJP.
  Let $f\colon \Lambda\times (0,1]^{\Lambda} \to \R$ be a smooth
  compactly supported function and let
  $\rho \in \Omega_{c}^{2\Lambda}(\bbS_+^{2\Lambda})$.  Then:
  \begin{equation}
    \label{eq:gen-S22-1}
    -\sum_{j\in\Lambda}\cb{\rho(\V{u})x_{j}  \cL f(j, \V{z})}_{\beta}
    =
    \sum_{j\in\Lambda} \cb{(T_j\rho)(\V{u})f(j,\V{z})}_{\beta}
    .
  \end{equation}
  In particular, the following integrated version holds for smooth and 
  compactly supported $f\colon\Lambda\times(0,1]^{\Lambda}\to\R$:
  \begin{equation}
	\label{eq:generic-iso-S22}
	\sum_{j\in\Lambda}\uspin{\rho(\V{u})x_{j} f(j, \V{z})}_{\beta}
	=
	\sum_{j\in\Lambda} \uspin{(T_j\rho)(\V{u})\int_0^\infty
          \E_{j,\V{z}}(f(X_t,\V{L}_t))\,dt}_{\beta} 
	.
      \end{equation}
\end{lemma}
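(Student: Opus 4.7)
The plan is to follow the same template used in Lemma~\ref{lem:gen-VDJP} and Lemma~\ref{lem:gen-SUSY-H22}, with the super-Berezin integration theory for $\bbS^{2|2}_+$ playing the role that ordinary integration on $\bbS^n_+$ played in Section~\ref{sec:isomorphism-theorems-hem}. The starting point is the Ward identity \eqref{eq:IBP-SUSY-S22} for the boost generator $T_j$, which yields the integration by parts formula $[(T_j f_1) f_2]_\beta = [f_1 (T_j^\star f_2)]_\beta$ with adjoint
\begin{equation}
  T_j^\star f(\V{u}) = -T_j f(\V{u}) + (T_j H_\beta(\V{u})) f(\V{u}),
\end{equation}
valid for any forms $f_1, f_2 \in \Omega^{2\Lambda}_c(\bbS^{2\Lambda}_+)$.

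Next I would compute $T_j H_\beta$ explicitly. Using the super-hemispherical inner product \eqref{eq:hemprod} together with $T_j z_k = -x_k 1_{j=k}$ and $T_j x_k = z_k 1_{j=k}$, a short symmetric-sum calculation gives $T_j H_\beta(\V{u}) = \sum_{k\in\Lambda} \beta_{jk}(x_j z_k - x_k z_j)$, which is formally identical to the identity in the hyperbolic case~\eqref{eq:gen-hyp-6}. Then, applied to a form $f(j,\V{z})$ using the chain rule, and noting the crucial sign $T_j z_j = -x_j$ (opposite to the hyperbolic boost), I get
\begin{equation}
  -T_j^\star f(j,\V{z}) = \sum_{k\in\Lambda}\beta_{jk}(x_k z_j - x_j z_k) f(j,\V{z}) - x_j \ddp{f(j,\V{z})}{\ell_j}.
\end{equation}
Summing over $j$ and relabelling indices in the double sum collapses this, after comparison with the VDJP generator \eqref{eq:VDJP-gen}, to $-\sum_j T_j^\star f(j,\V{z}) = \sum_j x_j (\cL f)(j,\V{z})$. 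It is precisely the sign flip $T_j z_j = -x_j$ that swaps the VRJP generator appearing in Lemma~\ref{lem:gen-SUSY-H22} for the VDJP generator here. Multiplying by $\rho$ and applying the integration by parts identity yields~\eqref{eq:gen-S22-1}.

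For the integrated version~\eqref{eq:generic-iso-S22}, I would apply \eqref{eq:gen-S22-1} to $f_t(j,\V{\ell}) \bydef \E_{j,\V{\ell}}(f(X_t,\V{L}_t))$, where smoothness and compact support of $f_t$ in $\V{\ell}$ follow from the step-by-step construction of the VDJP (cf.\ the corresponding remark following~\eqref{eq:VRJP-generator}). Kolmogorov's backward equation $\cL f_t = \partial_t f_t$ converts the resulting identity into a statement about $-\partial_t$, which one integrates over $t \in (0,\infty)$ to obtain~\eqref{eq:generic-iso-S22}. The boundary term at $t = \infty$ vanishes trivially in the VDJP case because the local times are strictly decreasing and the process is stopped at the first time $\zeta$ that some $L^j_s$ reaches $0$; since $f$ is compactly supported in $(0,1]^\Lambda$ and $\V{L}_t$ eventually leaves this support (or the walk is killed), $f_t \to 0$ as $t \to \infty$.

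The only mildly delicate point, which I anticipate as the main technical obstacle, is to confirm that the Berezin integration by parts \eqref{eq:IBP-SUSY-S22} applies to products of compactly supported forms like $\rho \cdot f_t$ given that the coordinate $z$ has a square-root singularity at $\partial \bbB^2$ through~\eqref{eq:zformS}. Since $\rho \in \Omega_c^{2\Lambda}(\bbS^{2\Lambda}_+)$ is compactly supported inside the open hemisphere and $f_t$ depends on $\V{u}$ only through the smooth bounded form $\V{z}$, the integrand is smooth and compactly supported in the ball $\bbB^{2\Lambda}$, so extension to all of $\R^{2\Lambda}$ and the results of Appendix~\ref{sec:SUSY-intro} apply without issue.
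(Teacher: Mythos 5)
Your proposal is correct and follows essentially the same route as the paper, which itself simply defers to Lemma~\ref{lem:gen-VDJP} (and thence to Lemmas~\ref{lem:gen-hyp} and~\ref{lem:gen-SRW}): integrate by parts via the Ward identity~\eqref{eq:IBP-SUSY-S22}, observe that the computation of $T_j H_\beta$ is unchanged while the sign flip $T_j z_j = -x_j$ converts the VRJP generator into the VDJP generator, then integrate in time using Kolmogorov's backward equation with a vanishing boundary term. Your remarks on the preservation of compact support in $\V{\ell}$ and on the applicability of Berezin integration to compactly supported forms are sensible observations that the paper leaves implicit.
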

\begin{proof}
  The proof is identical to that of Lemma~\ref{lem:gen-VDJP}.
\end{proof}

The SUSY analogue of Theorem~\ref{thm:BHS-hemi} is the following. 

\begin{theorem}
  \label{thm:SUSY-hem}
  Let $[\cdot]_\beta$ be the superexpectation of the $\bbS_+^{2|2}$ model, and let $\E_{i,\V{\ell}}$ be the expectation of the VDJP.
  Let $g\colon (0,1]^{\Lambda}\to \R$ be a smooth compactly supported
  function, and let $a,b\in\Lambda$. Then
  \begin{equation}
    \label{eq:S22-iso}
    \uspin{x_{a}x_{b}g(\V{z})}_{\beta} =
    \int_{0}^{\infty}\E_{a,\V{1}}(g(\V{L}_{t})1_{X_t = b})\,dt.
  \end{equation}
\end{theorem}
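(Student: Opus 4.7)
The plan is to mirror the proof of Theorem~\ref{thm:SUSY-BHS} for the $\HH^{2|2}$ model, replacing the Lorentz boost and hyperbolic localisation with their hemispherical analogues; all the ingredients have already been assembled in Lemma~\ref{lem:gen-SUSY-S22} (the spherical Ward identity for the joint VDJP) and in~\eqref{eq:s22loc} (the localisation identity on $\bbS^{2|2}_{+}$).

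First, I would apply Lemma~\ref{lem:gen-SUSY-S22} with the choices
\begin{equation}
\rho(\V{u}) \bydef x_{a}, \qquad f(j,\V{\ell}) \bydef g(\V{\ell})\,1_{j=b},
\end{equation}
which are legitimate inputs since $x_{a}$ is a smooth form of moderate growth and $g$ is smooth and compactly supported. Using the coordinate action of the infinitesimal rotation, namely $T_{j}x_{a}=z_{a}1_{j=a}$, the identity~\eqref{eq:generic-iso-S22} reduces to
\begin{equation}
\uspin{x_{a}x_{b}\,g(\V{z})}_{\beta}
=
\uspin{z_{a}\int_{0}^{\infty}\E_{a,\V{z}}\pa{g(\V{L}_{t})\,1_{X_{t}=b}}\,dt}_{\beta}.
\end{equation}
This is the exact analogue of the intermediate identity obtained in the proof of Theorem~\ref{thm:SUSY-BHS}, with the hemispherical Berezin integral in place of the hyperbolic one.

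The second and final step is to observe that the integrand on the right-hand side is a function of $\V{z}$ alone, with no remaining dependence on $\V{\tilde u}\V{\tilde u}^{T}$, and in particular is supersymmetric. Hence I may apply the $\bbS^{2|2}_{+}$ localisation identity~\eqref{eq:s22loc}, setting $\V{z}\mapsto\V{1}$, to obtain
\begin{equation}
\uspin{z_{a}\int_{0}^{\infty}\E_{a,\V{z}}\pa{g(\V{L}_{t})\,1_{X_{t}=b}}\,dt}_{\beta}
=
\int_{0}^{\infty}\E_{a,\V{1}}\pa{g(\V{L}_{t})\,1_{X_{t}=b}}\,dt,
\end{equation}
since the factor $z_{a}$ contributes $1$ at $\V{z}=\V{1}$. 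This yields~\eqref{eq:S22-iso}.

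I do not expect any genuine obstacle: the structural proof of the hyperbolic case carries over verbatim once one checks the sign conventions in $T_{j}$ and the fact that $\V{L}_{t}$ stays in $(0,1]^{\Lambda}$ up to the explosion time $\death$, so that compact support of $g$ makes $\E_{a,\V{z}}(g(\V{L}_{t})1_{X_{t}=b})$ and its time integral well-defined smooth functions of $\V{z}$. The only very minor point to verify, as in Lemma~\ref{lem:gen-SUSY-S22} itself, is that applying localisation in the VDJP setting is unproblematic because $g$ being compactly supported in $(0,1]^{\Lambda}$ yields a form of $\V{z}$ that is compactly supported away from the boundary of the admissible region, so the smoothness hypotheses of~\eqref{eq:s22loc} are met.
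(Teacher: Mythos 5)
Your proof is correct and is exactly the argument the paper intends: the paper's proof of Theorem~\ref{thm:SUSY-hem} simply states that it is ``essentially identical to that of Theorem~\ref{thm:SUSY-BHS}'', and you have carried out precisely the same two steps (apply Lemma~\ref{lem:gen-SUSY-S22} with $\rho = x_a$, $f(j,\V{\ell})=g(\V{\ell})1_{j=b}$, note $T_j\rho = 1_{j=a}z_j$, then localise via~\eqref{eq:s22loc}) in the hemispherical setting.
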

\begin{proof}
  The proof is essentially identical to that of Theorem~\ref{thm:SUSY-BHS}.
\end{proof}

\begin{theorem} 
  \label{thm:S22-rk}
  Let $[\cdot]_\beta$ be the superexpectation of the $\bbS_+^{2|2}$ model, and let $\E_{i,\V{\ell}}$ be the expectation of the VDJP.
  Let $g\colon(0,1]^{\Lambda}\to \R$ be a smooth compactly supported function,
  let $a\in\Lambda$, and let $s\in(-\frac{\pi}{2},\frac{\pi}{2})$.  Then
  \begin{equation}
    \uspin{ g(\V{z})\delta_{\theta_s u_0}(u_a)}_\beta =
    \E_{a,\V{1}}(g(\V{L}_{\tau(\cos{s})})1_{\tau(\cos s)<\death})
  \end{equation}
  where $\tau(\gamma) = \inf \{t \,| \,L_t^a \le \gamma\}$ and
 $\theta_s u_0=(\cos{s},\sin{s},0,0,0)\in \bbS^{2|2}_{+}$.
\end{theorem}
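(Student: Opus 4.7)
The proof parallels that of Theorem~\ref{thm:RK-VDJP} (the non-supersymmetric hemispherical Ray--Knight theorem), concluded by supersymmetric localisation as in Theorem~\ref{thm:H22-rk}. The identity is trivial when $s=0$, so assume $s\neq 0$.

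The first step is to apply Lemma~\ref{lem:gen-SUSY-S22} with $\rho(\V{u}) \equiv \rho(u_a)$ and $f(j,\V{\ell}) \bydef g(\V{\ell})\eta(\ell_a)$, where the form $\rho \in \Omega^{2}_c(\bbS^{2}_{+})$ and the function $\eta\colon(0,1]\to\R$ are chosen so that $T_a\rho$ and $\eta$ are smooth compactly supported approximations to $\delta_{u_0}(u_a) - \delta_{\theta_s u_0}(u_a)$ and $\delta_{\cos s}$, respectively, subject to $\rho(u_a)\eta(z_a) = 0$. Since $s\neq 0$ the points $u_0$ and $\theta_s u_0$ have distinct $z$-coordinates ($1$ and $\cos s$), so such $\rho$ and $\eta$ exist by an explicit construction analogous to those in the proofs of Theorems~\ref{thm:eucl-rk} and~\ref{thm:RK-VDJP}.

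With these choices the left-hand side of \eqref{eq:generic-iso-S22} vanishes: only the $j=a$ term contributes since $\rho$ depends only on $u_a$, and that term is killed by $\rho(u_a)\eta(z_a)=0$. Passing to the limit where the approximations sharpen, exactly as in the proof of Theorem~\ref{thm:RK-VDJP}, yields the intermediate identity
\begin{equation}
  \uspin{g(\V{z})\delta_{\theta_s u_0}(u_a)}_\beta = \uspin{\E_{a,\V{z}}\pa{g(\V{L}_{\tau(\cos s)}) 1_{\tau(\cos s)<\death}}\delta_{u_0}(u_a)}_\beta.
\end{equation}
Here the $\delta_{\theta_s u_0}(u_a)$ term concentrates the spin integral at $z_a=\cos s$ and hence the time integral at $t=0$ (since the VDJP starts with $L_0^a = z_a$), producing the left-hand side above. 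The $\delta_{u_0}(u_a)$ term fixes $z_a=1$, and the compactly supported $\eta$ near $\cos s$ then pins the time integral at $\tau(\cos s)$, while vanishing on trajectories killed before $L_t^a$ descends to $\cos s$; this is what produces the indicator $1_{\tau(\cos s)<\death}$.

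Finally, the integrand on the right-hand side depends on $\V{u}$ only through the supersymmetric form $\V{z}$, together with the constraint $\delta_{u_0}(u_a)$. Applying the localisation identity \eqref{eq:s22loc} exactly as in the proof of Theorem~\ref{thm:H22-rk} evaluates the $\V{z}$-dependence at $\V{1}$, producing $\E_{a,\V{1}}(g(\V{L}_{\tau(\cos s)}) 1_{\tau(\cos s)<\death})$, which is the right-hand side of the theorem. The main obstacle is the careful handling of killing in the limit step: unlike the VRJP, whose local times diverge almost surely so that $\tau(\cosh s)$ is always finite, the VDJP may be killed before $L_t^a$ descends to $\cos s$, and one must verify that the compact support of $\eta$ inside $(0,1]$ automatically enforces $1_{\tau(\cos s)<\death}$ rather than requiring this event to be imposed by hand.
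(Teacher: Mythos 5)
Your proof is correct and follows the same route the paper takes: the paper's proof of Theorem~\ref{thm:S22-rk} is a one-liner pointing to Theorem~\ref{thm:H22-rk} ``mutatis mutandis,'' and you have correctly carried out the intended combination of the non-supersymmetric hemispherical argument (Theorem~\ref{thm:RK-VDJP}, which produces the indicator $1_{\tau(\cos s)<\death}$ via the compact support of $\eta$) with the supersymmetric localisation step from Theorem~\ref{thm:H22-rk}. Your closing remark identifying the killing indicator as the only genuine change relative to the $\HH^{2|2}$ case is exactly the content behind the paper's ``mutatis mutandis.''
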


\begin{proof}
  The proof is, \emph{mutatis mutandis}, identical to that of
  Theorem~\ref{thm:H22-rk}.
\end{proof}

\section{Application to limiting local times: the Sabot--Tarr\`{e}s limit}
\label{sec:ST}

In \cite{MR3420510}, Sabot and Tarr\`{e}s established the first connection
between the vertex-reinforced jump process and the SUSY hyperbolic
sigma model. Their result relates the asymptotic local time
distribution of a time-changed VRJP to a certain \emph{horospherical}
marginal of the $\HH^{2|2}$ model.  In this section we derive their
result (as stated in \cite[Appendix~B]{MR3729620}) from the
Ray--Knight isomorphism for the $\HH^{2|2}$ model.
The essence of the result is the following
corollary of Theorem~\ref{thm:H22-rk}.
Recall that we write $(z,x,y,\xi,\eta)\in \R^{3|2}$.

\begin{corollary}
  \label{cor:st-es}
  Let $[\cdot]_\beta$ be the superexpectation of the $\HH^{2|2}$
  model, and let $\E_{i,\V{\ell}}$ be the expectation of the VRJP.
  For $g\colon\R^{\Lambda}\to \R$ smooth and compactly
    supported, 
  \begin{equation}
    \label{eq:st-es}
    \lim_{\gamma \rightarrow \infty}\E_{a,\V{1}}\left(g(\frac{1}{\gamma}\V{L}_{\tau(\gamma)})\right)
    = \uspin{g(\V{z}+ \V{x})\delta_{u_{0}}(u_a)}_{\beta}
  \end{equation}
  where $\tau(\gamma) = \inf\{t | L_a^t \geq \gamma\}$ and
  $u_{0}=(1,0,0,0,0)$.
\end{corollary}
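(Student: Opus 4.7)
The plan is to deduce the corollary directly from Theorem~\ref{thm:H22-rk} by choosing a rescaled test function and letting $s \to \infty$. Specifically, for $s>0$, set $\gamma = \cosh s$ and apply Theorem~\ref{thm:H22-rk} with $g$ replaced by $g_\gamma(\V{v}) \bydef g(\V{v}/\gamma)$. This immediately gives
\begin{equation}
  \uspin{g\pb{\tfrac{1}{\cosh s}\theta_s \V{z}}\delta_{u_0}(u_a)}_\beta
  = \E_{a,\V{1}}\pb{g\pb{\tfrac{1}{\cosh s}\V{L}_{\tau(\cosh s)}}},
\end{equation}
which is already the asserted identity at finite $s$ after substituting $\gamma = \cosh s$. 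It remains only to take the limit $s\to\infty$ on both sides.

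On the left, I would use the explicit formula for the boost from \eqref{eq:h22-boost}, namely $\theta_s z_j = z_j \cosh s + x_j \sinh s$, to conclude
\begin{equation}
  \frac{1}{\cosh s}\,\theta_s z_j \;=\; z_j + x_j \tanh s \;\longrightarrow\; z_j + x_j
\end{equation}
as $s\to\infty$, uniformly in the coordinates $(x_j,y_j,\xi_j,\eta_j)$. Since $g$ is smooth and bounded, the coefficient functions obtained after Berezin integration in the variables $(\xi_i,\eta_i)_{i\in\Lambda}$ converge pointwise to those of $g(\V{z}+\V{x})$ and are uniformly bounded (each Grassmann derivative brings down at most polynomially-growing factors in $\V{x},\V{y}$, which are controlled by the Gaussian-type decay of $e^{-H_\beta}$ on the hyperboloid, and by the constraint $u_a = u_0$ on the remaining integration variables).

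On the right, the substitution $\gamma = \cosh s$ and $s\to\infty$ is simply a reparametrisation giving $\E_{a,\V{1}}(g(\V{L}_{\tau(\gamma)}/\gamma))$ with $\gamma \to \infty$, which is the quantity on the left-hand side of \eqref{eq:st-es}.

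The main obstacle is the dominated-convergence argument on the spin side: the fixed-spin condition $\delta_{u_0}(u_a)$ reduces the integral to one over the remaining variables with effective density $e^{-H_\beta}$, and one must verify that after expanding the Grassmann variables (which only contributes finitely many terms with polynomial prefactors in the even coordinates), the integrand is dominated uniformly in $s$ by an integrable function. This is ensured because removing $u_a$ from the free coordinates leaves an expectation that is manifestly finite (the pinned $\HH^{2|2}$ measure is normalisable, as follows from localisation \eqref{eq:h22loc} applied to a suitable supersymmetric observable), and $\|g\|_\infty$ bounds the bosonic part. No further technicality arises because, at every finite $s$, the identity is an exact consequence of Theorem~\ref{thm:H22-rk}, so the corollary follows from an elementary passage to the limit.
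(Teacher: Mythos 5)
Your proposal is correct and follows essentially the same route as the paper: apply Theorem~\ref{thm:H22-rk} with $\gamma=\cosh s$ and the rescaled test function, use the boost formula \eqref{eq:h22-boost} to rewrite $\frac{1}{\cosh s}\theta_s\V{z} = \V{z}+\V{x}\tanh s$, and pass to the limit $s\to\infty$ by dominated convergence. The paper simply invokes dominated convergence without comment; your extra justification of integrability of the dominating function in the pinned $\HH^{2|2}$ superexpectation is additional detail but not a different argument.
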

\begin{proof}
  We write $\gamma=\cosh s$. Then by Theorem~\ref{thm:H22-rk} applied
  to $g(\V{L}_{\tau(\cosh s)}/\cosh s)$, 
  \begin{align}
    \E_{a,\V{1}}\left( g( \frac{1}{\cosh s}\V{L}_{\tau(\cosh s}))\right)
    &= 
      \uspin{g(\frac{1}{\cosh s}\theta_s \V{z})\delta_{u_{0}}(u_a)}_{\beta}
      \nnb
    &= 
      \uspin{g(\V{z}+ \V{x}\tanh{s})\delta_{u_0}(u_a)}_{\beta},
  \end{align}
  by using~\eqref{eq:h22-boost} to compute $\theta_{s}\V{z} = \cosh s
  \V{z} + \sinh s \V{x}$.  Since
  $\tanh s \to 1$ as $s\to\infty$, by dominated convergence we obtain
  \begin{equation*}
    \lim_{s \rightarrow \infty}
    \E_{a,\V{1}}\left(g(\frac{1}{\cosh s}\V{L}_{\tau(\cosh s)})\right)
    = \uspin{g(\V{z}+ \V{x})\delta_{u_0}(u_a)}_{\beta}.
    \qedhere
\end{equation*}
\end{proof}

We now recover \cite[Theorem~2]{MR3420510} as stated
in~\cite[Theorem~B]{MR3904155}. Write $\log(\V{v}) =
(\log(v_i))_{i\in\Lambda}$. Applying Corollary~\ref{cor:st-es} to
a function $g\circ \log $ yields
\begin{equation}
    \label{eq:st-es-1}
    \lim_{\gamma \rightarrow \infty}\E_{a,\V{1}}\left(g(\log(\V{L}_{\tau(\gamma)}) - \log \gamma)\right)
    = \uspin{g(\log(\V{z}+ \V{x}))\delta_{u_{0}}(u_a)}_{\beta}
  \end{equation}
  where $\log \gamma = (\log \gamma)_{i\in\Lambda}$.
  To recover \cite[Theorem~2]{MR3420510} we
  rewrite the right-hand side of~\eqref{eq:st-es-1}. To do this,
  recall, e.g., from~\cite[Section~2.2]{MR2728731}, that horospherical
  coordinates for the $\HH^{2|2}$ model are given by the change of
  generators from $(x,y,\xi,\eta)$ to $(s,t,\psi,\bar\psi)$, where
  \begin{equation}
    \begin{split}
      x \bydef \sinh{t} - \frac{1}{2}(s^2 + 2\psi\psibar)e^t, \;\;\; y
      &\bydef se^{t},\;\;\;
      z \bydef \cosh{t} + \frac{1}{2}(s^2 + 2\psi\psibar)e^{t},\\
      \xi \bydef \psi e^{t},\;\;&\;\; \eta \bydef \psibar e^{t}.
    \end{split}
  \end{equation}
  Let
  \begin{equation}
    \label{eq:H1}
    H_1(\V{t}) \bydef 
    \frac{1}{2}\sum_{i,j\in\Lambda} \beta_{ij} (\cosh(t_i - t_j) - 1).
  \end{equation}
  The right-hand side of \eqref{eq:st-es-1} can be written
  explicitly in horospherical coordinates as
  \begin{equation} \label{e:SThoro} \uspin{g(\log(\V{z}+
      \V{x}))\delta(u_a)}_{\beta} =
    \frac{1}{\sqrt{2\pi}^{|\Lambda|-1}} \int_{
      \R^{|\Lambda|-1}}g(\V{t})e^{-H_1(\V{t})}\sqrt{\det{D(\beta,\V{t})}}
    \prod_{i\neq a} e^{-t_{i}}\,dt_{i},
  \end{equation}
  where $D(\beta,\V{t})$ is the $(|\Lambda|-1)\times(|\Lambda|-1)$
  matrix with entries
  \begin{equation}
    D_{ij}(\beta, \V{t}) \bydef
    \begin{cases} 
      -\beta_{ij}e^{t_i +t_j},\quad &i\ne j \\
      \sum_{k\ne a} \beta_{ik}e^{t_i + t_k} + \beta_{ai}e^{t_i} \quad
      &i = j
    \end{cases}
  \end{equation}
  indexed by $i,j \in \Lambda\backslash\{a\}$.
  This is \cite[Theorem~2]{MR3420510} as stated
  in~\cite[Theorem~B]{MR3904155}. In obtaining this formula we have
  used Theorem~\ref{thm:susy-cov} to perform the change of generators
  and then integrated out $s,\psi$ and $\bar\psi$, which can be done
  explicitly as conditioned on the $t$-variables these are Gaussian integrals,
  see~\cite[Section~2.3]{MR2728731}.

\begin{remark}
  \label{rk:SThoro}
  Qualitatively, the appearance of horospherical coordinates can
  be explained as follows.  The hyperbolic Ray--Knight isomorphism
  relates the time evolution of the VRJP by $\cosh s$ to the Lorentz
  boost by $s$ in the $xz$-plane.
  Since the asymptotics of Lorentz boosts in the
  $xz$-plane are captured by the
  $t$ marginal in horospherical coordinates,
  the formulation of the asymptotic local time distribution in terms
  of the $t$ marginal is quite geometrically natural.

  The Sabot--Tarr\`{e}s limit formula \cite[Theorem~2]{MR3420510} can
  also be derived from the hyperbolic BFS--Dynkin isomorphism.  More
  precisely, this can be done by using Corollary~\ref{c:Dynkin-fixedT}
  below, see \cite{SwanPhD}.  In this derivation the role of
  horospherical coordinates can be seen even more explicitly.

 For another explanation of the relation of horospherical
  coordinates to the VRJP, see \cite{MerklRollesTarres}.
\end{remark}

\section{Time changes and resolvent formulas}
\label{sec:TC-HK}

In this section we describe some useful variations and reformulations
of our theorems. For the sake of simplicity
we only consider the VRJP, but analogous results
also hold for the SRW and the VDJP.

\subsection{Time-changed and fixed-time formulas}
\label{sec:TC}

In the literature on the VRJP time changes
have played an important role; see, for example, \cite{MR3420510}.
For comparision with these references,
this section briefly explains how isomorphism theorems can be
translated to these time-changes.

For a Markov process $(X_{s},\V{L}_{s})$ on
$\Lambda \times \R^{\Lambda}$, let
$V\colon [\min_{ i\in\Lambda} L_0^i, \infty)\to [\min_{i\in\Lambda} V(L_0^i), \infty)$ be an increasing diffeomorphism and
define a random function $A\colon [0, \infty) \to [0, \infty)$ by
\begin{equation}
\label{e:Adef}
A(s) \bydef \int_{0}^{s} V'(L_{u}^{X_{u}})\,du = \sum_{i\in\Lambda}V(L_{s}^{i}) - V(L_0^i).
\end{equation}

We define $(\tilde X_{t},\V{\tilde L}_{t})$, the \emph{time-change by
  $V$} of $(X_{t},\V{L}_{t})$, by
\begin{equation}
\label{e:timechange}
\tilde X_{t}\bydef X_{A^{-1}(t)}, \quad \tilde L_t^i \bydef
V(L_{A^{-1}(t)}^i) = V(L_0^i) + \int_0^t 1_{\tilde X_u=i}\,du \,.
\end{equation}
Note that $A(0) = A^{-1}(0) = 0$, $\tilde X_{0}=X_{0}$ and
$\tilde L_0^i = V(L_{0}^i)$.  

In this section we will write
$V(\V{1}) \bydef (V(1))_{i\in\Lambda}$. The next corollary is an
example of an isomorphism theorem for a time-changed process.

\begin{corollary} 
  \label{c:DynkinV}
  Let $[\cdot]_\beta$ be the superexpectation of the $\HH^{2|2}$ model, and
  let $(\tilde X_t,\V{\tilde L}_t)$ be the time-change by $V$ of the VRJP
  with expectation $\E_{i,\V{\ell}}$.
  Then
  \begin{equation}
    \int_0^\infty \E_{a,V(\V{1})}(g(\tilde{X}_t,\V{\tilde L}_t)) \, dt
    =
    \sum_{i\in\Lambda} \q{x_a x_i V'(z_i) \, g(i,V(\V{z}))}_{\beta}
    .
  \end{equation}
\end{corollary}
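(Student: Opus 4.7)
The plan is to reduce the corollary to the supersymmetric BFS--Dynkin isomorphism for $\HH^{2|2}$ (Theorem~\ref{thm:SUSY-BHS}) and then convert the resulting time integral for the original VRJP into one for the time-changed process by a pathwise change of variables.

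First, I would observe that applying Theorem~\ref{thm:SUSY-BHS} separately for each $b\in\Lambda$ with a test function $g_b$ depending on $b$, and summing over $b$ using $\sum_{b}1_{X_t=b}=1$, yields the following restatement: for any $h\colon\Lambda\times\R^{\Lambda}\to\R$ smooth with rapid decay in the second argument,
\begin{equation}
\sum_{i\in\Lambda}\q{x_a x_i\, h(i,\V{z})}_{\beta}
= \int_{0}^{\infty}\E_{a,\V{1}}\bigl(h(X_t,\V{L}_t)\bigr)\,dt.
\end{equation}
Specialising $h(i,\V{\ell}) \bydef V'(\ell_i)\, g(i,V(\V{\ell}))$ makes the left-hand side equal to the expression $\sum_i\q{x_a x_i V'(z_i)g(i,V(\V{z}))}_{\beta}$ appearing in the corollary, so the work reduces to identifying the right-hand side of the display above with $\int_0^\infty \E_{a,V(\V{1})}(g(\tilde X_t,\V{\tilde L}_t))\,dt$.

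Second, I would perform the pathwise change of variables $t=A(s)$, where $A$ is defined by~\eqref{e:Adef}. By~\eqref{e:Adef}, $A'(s) = V'(L_s^{X_s})$, so $dt = V'(L_s^{X_s})\,ds$, and by the definition~\eqref{e:timechange} of the time-change, $\tilde X_{A(s)} = X_s$ and $\V{\tilde L}_{A(s)} = V(\V{L}_s)$. Hence, pathwise,
\begin{equation}
\int_{0}^{\infty} g(\tilde X_t,\V{\tilde L}_t)\,dt
= \int_{0}^{\infty} V'(L_s^{X_s})\, g(X_s,V(\V{L}_s))\,ds.
\end{equation}
Reading $\E_{a,V(\V{1})}$ on the left as the initial condition for the time-changed process, i.e., for the underlying VRJP $(X_s,\V{L}_s)$ started at $(a,\V{1})$ (so that $\V{\tilde L}_0 = V(\V{1})$), taking expectations gives the desired identity between the two time integrals, and the corollary follows.

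The main obstacle is verifying that the specialisation $h(i,\V{\ell})=V'(\ell_i) g(i,V(\V{\ell}))$ meets the smoothness and rapid-decay hypotheses of Theorem~\ref{thm:SUSY-BHS}. For $g$ smooth with rapid decay and $V$ a smooth increasing diffeomorphism with bounded derivative growth this is routine, but in general one may need to impose mild regularity on $g$ and $V$ or to truncate and pass to a limit. A minor bookkeeping point is that \eqref{e:Adef} requires $A(s)\to\infty$ as $s\to\infty$ to justify the full change of variables on $[0,\infty)$; this follows from $V$ being an increasing diffeomorphism onto $[V(1),\infty)$ together with the fact that $L^i_t\to\infty$ in probability for every $i\in\Lambda$, as noted in Section~\ref{sec:vert-reinf-jump}.
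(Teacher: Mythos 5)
Your proof is correct and takes essentially the same approach as the paper: it is the same pathwise change of variables $t=A(s)$ combined with Theorem~\ref{thm:SUSY-BHS} summed over the endpoint $b$ (the paper performs the change of variables first, then invokes the theorem, while you order the two steps the other way, but the content is identical).
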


\begin{proof}
  By \eqref{e:timechange} and the change of variable $s = A^{-1}(t)$,
  \begin{align}
    \int_{0}^{\infty} \E_{\tilde X_0, \V{\tilde L}_0}(g(\tilde X_t, \V{\tilde L}_t))\,dt 
    &= \int_{0}^{\infty} \E_{X_0, \V{L}_0}\left(g(X_{A^{-1}(t)}, V(\V{L}_{A^{-1}(t)}))\right)\,dt
      \nnb
    &= \int_{A^{-1}(0)}^{A^{-1}(\infty)} \E_{X_0, \V{L}_0}\left(g(X_{s}, V(\V{L}_{s})) A'(s)\right)\,ds
      \nnb
    &= \int_{0}^{\infty} \E_{X_0, \V{L}_0}\left(g(X_{s}, V(\V{L}_{s}))
      V'\left(L_{s}^{X_s}\right)\right)\,ds.
  \end{align}
  The claim now follows from Theorem~\ref{thm:SUSY-BHS} in the
  case that $g(i,\V{\ell})$ is of the form $\delta_{i,j}f(\V{\ell})$.
  The result for
  more general functions follows by summing (or by using the second
  part of Lemma~\ref{lem:gen-SUSY-H22}).
\end{proof}

The next corollary shows that supersymmetric isomorphism
theorems also give formulas for the local time distribution at
\emph{fixed} times.

\begin{corollary} 
  \label{c:Dynkin-fixedT}
  Let $[\cdot]_\beta$ be the superexpectation of the $\HH^{2|2}$ model, and
  let $(\tilde X_t,\V{\tilde L}_t)$ be the time-change by $V$ of the VRJP
  with expectation $\E_{i,\V{\ell}}$.
  Let $\delta_\epsilon\colon \R \to \R$ be a smooth and compactly supported
  approximation to $\delta_{0}$. 
  Then for $g\colon \R^{\Lambda}\to \R$ smooth and rapidly decaying
  and any $T>0$,
  \begin{equation*} \label{e:Dynkin-fixedT}
    \E_{a,V(\V{1})}g\left(\V{\tilde L}_T-\frac{T}{N}\right)
    =
    \lim_{\epsilon\to 0}\sum_{i\in\Lambda} \qa{x_ax_i V'(z_i) g\left(V(\V{z})-\frac{T}{N}\right) \delta_\epsilon\left(\sum_{i\in\Lambda} \left(V(z_i)-V(1)-\frac{T}{N}\right)\right)}_{\beta}.
  \end{equation*}
\end{corollary}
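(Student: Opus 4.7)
The plan is to apply Corollary~\ref{c:DynkinV} with an $\epsilon$-dependent test function engineered so that the time integral on the left-hand side collapses to the single time $t=T$. The key observation is that, for the time-changed VRJP started from $\V{\tilde L}_0 = V(\V{1})$, one has the deterministic conservation law
\[
\sum_{i\in\Lambda} \tilde L_t^i \;=\; N V(1) + t, \qquad N \bydef |\Lambda|,
\]
because $\int_0^t \sum_{i\in\Lambda} 1_{\tilde X_u = i}\,du = t$. Hence any delta function concentrating $\sum_i \ell_i$ at $N V(1) + T$ will pin the process time to $t = T$.

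Concretely, I would apply Corollary~\ref{c:DynkinV} to the test function
\[
F(i,\V{\ell}) \bydef g\!\left(\V{\ell} - \tfrac{T}{N}\right)\, \delta_\epsilon\!\left(\sum_{j\in\Lambda} \left(\ell_j - V(1) - \tfrac{T}{N}\right)\right),
\]
which is smooth and rapidly decaying because $g$ has these properties and $\delta_\epsilon$ is smooth and compactly supported. The corollary then gives
\[
\int_0^\infty \E_{a,V(\V{1})}\!\left(F(\tilde X_t, \V{\tilde L}_t)\right)\,dt \;=\; \sum_{i\in\Lambda}\qa{x_a x_i V'(z_i)\, F(i,V(\V{z}))}_\beta.
\]
On the left, the conservation law converts the delta factor into $\delta_\epsilon(t-T)$, so this side reduces to $\int_0^\infty \E_{a,V(\V{1})}(g(\V{\tilde L}_t - T/N))\, \delta_\epsilon(t-T)\,dt$, which tends to $\E_{a,V(\V{1})}(g(\V{\tilde L}_T - T/N))$ as $\epsilon \to 0$ by continuity of $t\mapsto \E_{a,V(\V{1})}(g(\V{\tilde L}_t - T/N))$. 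The right-hand side of the corollary is already precisely the spin-model expression above with the limit $\epsilon\to0$ kept outside, so taking the limit on both sides yields the claim.

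The only real obstacle is verifying that $F$ satisfies the hypotheses of Corollary~\ref{c:DynkinV} and justifying the evaluation of the delta-function limit on the random-walk side. Both are routine: $F$ inherits smoothness from $g$ and $\delta_\epsilon$; the factor $\delta_\epsilon$ enforces compact support of $F$ in one linear direction in $\V{\ell}$ while $g$ provides rapid decay in the transverse directions; and since $t\mapsto \V{\tilde L}_t$ is $1$-Lipschitz with $g$ smooth and rapidly decaying, dominated convergence yields the pointwise limit at $t=T$.
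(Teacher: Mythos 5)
Your proposal is correct and uses essentially the same argument as the paper: both proofs rest on Corollary~\ref{c:DynkinV} combined with the conservation law $\sum_{i\in\Lambda}(\tilde L_t^i - V(1)) = t$, which converts the delta factor in the local times into a delta factor in the time variable. The only cosmetic difference is direction: you choose a position-independent $\epsilon$-dependent test function and feed it through Corollary~\ref{c:DynkinV} starting from the spin side, while the paper starts from $\E_{a,V(\V{1})}g(\V{\tilde L}_T - T/N)$, decomposes by the indicator $1_{X_T=i}$, inserts the time integral against $\delta_\epsilon(t-T)$, rewrites using the conservation law, and then applies Corollary~\ref{c:DynkinV}; both routes produce the identical chain of equalities.
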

\begin{proof}
  The left-hand side can be written as
  \begin{align}
    \E_{a,V(\V{1})}(g(\V{\tilde L}_T-\frac{T}{N}))
    &=
      \sum_{i\in\Lambda} \E_{a,V(\V{1})}(g(\V{\tilde L}_T-\frac{T}{N})1_{X_T=i})
      \nnb
    &=
      \lim_{\epsilon\to 0}\sum_{i\in\Lambda} \int_0^\infty dt \,
      \E_{a,V(\V{1})}\pB{g(\V{\tilde L}_t-\frac{T}{N})1_{X_t=i}} \,
      \delta_\epsilon(t-T) 
      \nnb
    &=
      \lim_{\epsilon\to 0}\sum_{i\in\Lambda} \int_0^\infty dt \,
      \E_{a,V(\V{1})}\pB{g(\V{\tilde L}_t-\frac{T}{N})  1_{X_t=i} \delta_\epsilon(\sum_{i\in\Lambda} (\tilde L_t^i - V(1))-T)}
      \nnb
    &=
       \lim_{\epsilon\to 0}\sum_{i\in\Lambda} \qa{ x_a x_i V'(z_i) g\pa{V(\V{z})-\frac{T}{N}} \delta_\epsilon\pa{\sum_{i\in\Lambda} (V(z_i)-V(1)-\frac{T}{N})} }_\beta.
  \end{align}
  The second equality used that $t\mapsto
  \E_{a,V(\V{1})}(g(\V{\tilde L}_t-T/N)1_{X_t=i})$ is continuous,
  the third equality used that $\sum_{i\in\Lambda} (L_t^i - V(1))=t$ for any $t\geq 0$,
  and the fourth equality is Corollary~\ref{c:DynkinV}.
\end{proof}

By making use of an appropriate time-change, 
Corollary~\ref{c:Dynkin-fixedT} is the starting point for an
alternative derivation of the Sabot--Tarr\`{e}s limit formula \eqref{e:SThoro}, 
see Remark~\ref{rk:SThoro}.  Similar results have also been used as the
starting point for the study of large deviations of the local time of the
SRW~\cite[Theorem~1]{MR2330973}.

\subsection{Resolvent of the joint local time process}
\label{sec:HK}

The supersymmetric isomorphism theorems for the VRJP in
Section~\ref{sec:susy-hyperb-model} concern fixed initial local times for the joint
process $(X_t,\V{L}_t)$, i.e., $\V{L}_0=\V{1}$.
This initial condition arises from supersymmetric localisation at 
$(z,x,y,\xi,\eta)=(1,0,0,0,0)$ due to the sigma model constraint
$u\cdot u =-1$.  A more general and geometrically instructive
formulation can be obtained by considering the joint process
$(X_t, \V{L}_t)$ with a general initial condition. This
formulation involves the super-Minkowski space from
Section~\ref{sec:super-min} as opposed to the space $\HH^{2|2}$. 

\subsubsection{Super-Minkowski model}

Recall super-Minkowski space $\R^{3|2}$ from
Section~\ref{sec:super-min}.  We define the Berezin integral for an
observable $F \in \Omega^{2\Lambda}(\R^{3\Lambda})$ by
\begin{equation}
\label{eq:R32int}
\int_{(\R^{3|2})^{\Lambda}}F \bydef \frac{1}{(2\pi)^{|\Lambda|}} \int\, d\V{x}\,d\V{y}\,d\V{z}\, \partial_{\V{\eta}}\,\partial_{\V{\xi}}\;F,
\end{equation}
where $\partial_{\V{\eta}}\, \partial_{\V{\xi}}$ is defined by
$\partial_{\eta_{|\Lambda|}}\partial_{\xi_{|\Lambda|}}\dots \partial_{\eta_{1}}\partial_{\xi_{1}}$,
$d\V{x} = dx_{|\Lambda|}\dots dx_{1}$,
$d\V{y}=dy_{|\Lambda|}\dots dy_{1}$, and
$d\V{z}=dz_{|\Lambda|}\dots dz_{1}$ for some fixed ordering of the
$i\in\Lambda$ from $1$ to $|\Lambda|$.

For $u \in \R^{3|2}$, we write $u\cdot u< 0$ if the degree $0$ part
of the form $u\cdot u$ is negative,
where here $u\cdot u$ denotes the super-Minkowski inner product \eqref{eq:hypprod}.
For a spin configuration
$\V{u} \in (\R^{3|2})^\Lambda$ we write $\V{u} \cdot \V{u} < 0$ if
$u_i\cdot u_i< 0$ for all $i\in\Lambda$ and we then define
\begin{equation}
  r_i {\bydef} 
  \sqrt{-u_i\cdot u_i},
\end{equation}
and let $\V{r} = (r_{i})_{i\in\Lambda}$.
For such a spin configuration we consider the Hamiltonian
\begin{equation}
\label{eq:H22-unconstrained}
H_{\beta}(\V{u})
\bydef
\frac{1}{2}(\V{u},-\Delta_{\beta}\V{u})
+\frac{1}{2}(\V{r},-\Delta_{\beta}\V{r}),
\end{equation}
where the inner product for the $u_i$ is the one from \eqref{eq:hypprod}
and the $r_i$ are forms that are multiplied in the ordinary way:
$(\V{r},-\Delta_{\beta}\V{r}) = \sum_{i\in\Lambda}r_{i}(-\Delta_{\beta}r)_{i}$. 
Let $F\in \Omega^{2\Lambda}(\R^{3\Lambda})$ be a smooth form compactly
supported on $\{\V{u\cdot u}<0,\V{z}>0\}$, i.e., whose coefficient functions
vanish when the degree $0$ part of any form $u_{i}\cdot u_{i}$ is
non-negative or when $z_{i}\leq 0$ for any $i$.
We define an unnormalised superexpectation by
\begin{equation}
\label{eq:SUSY-Uhyp2}
[F]_\beta \bydef \int_{(\R^{3|2})^\Lambda} 
F(\V{u}) e^{-H_\beta(\V{u})} 1_{\V{u}\cdot \V{u}<0} 1_{\V{z}>0},
\end{equation}
with $\V{u}\cdot \V{u}<0$ as defined above. The assumption that
$F$ has compact support ensures the integrand is smooth.
We call this the \emph{super-Minkowski model}.
Note that $\{\V{u}\cdot \V{u} <0,\V{z}>0\}$ is a version of the
causal future for super-Minkowski space; see
Figure~\ref{fig:minkowski}.

\subsubsection{Symmetries and localisation}

Let
\begin{equation}
  T = x \ddp{}{z} + z \ddp{}{x}.
\end{equation}
Then $T$ represents an infinitesimal Lorentz boost in the $xz$-plane since
\begin{equation}
  T x = z, \qquad Tz = x, 
\end{equation}
and $T y = T \xi = T \eta = 0$. Note also that $T r = 0$.

The Hamiltonian $H_{\beta}$ is invariant under $T$, i.e.,
$\sum_{i\in\Lambda}T_i H_\beta(\V{u}) = 0$.  Here we have written
$T_i$ for the version of $T$ applying to the $i$-th coordinate.
Moreover the integral \eqref{eq:R32int} is invariant under $T$.  In
addition, the model is supersymmetric with supersymmetry generator $Q$
as in \eqref{eq:susy-gen}, and the following localisation statement
holds for all smooth $f\colon (0,\infty)^{2\Lambda} \to \R$ with
compact support:
\begin{equation} \label{e:Minkowski-localisation}
  [f(\V{z},\V{r})]_\beta = \int_{(0,\infty)^{\Lambda}} d\V{z}\, f(\V{z}, \V{z}).
\end{equation}
This can be seen by integrating over $\V{z}$ last when computing
the superexpectation, and using localisation for $(x,y,\eta,\xi)$, i.e.,
Corollary~\ref{cor:localisation}.

\subsubsection{Resolvent formula}
The super-Minkowski model is related to the resolvent of the VRJP.

\begin{theorem} \label{thm:VRJPresolvent} Let $[\cdot]_{\beta}$ be the
  superexpectation of the super-Minkowski model, and let
  $\pi = (\pi(i,\V{r}))$ be a smooth compactly support probability
  measure on $\Lambda \times (0,\infty)^\Lambda$.
  For all smooth $f\colon\Lambda\times\R^{\Lambda}\to\R$ with rapid decay,
  \begin{equation}
    \label{e:VRJPresolvent}
    \int_{0}^{\infty} \E_{\V{\pi}} f(X_{t},\V{L}_{t})\, dt
    =
    \sum_{i,j\in\Lambda}\cb{\frac{\pi(i,\V{r})}{r_i}x_{i}x_{j}  f(j,\V{z})}_{\beta}
  \end{equation}
  where we have written $\E_{\V{\pi}}$ to denote the expectation of
  a VRJP with initial condition $(X_{0},\V{L}_{0})$ distributed according to $\pi$.
\end{theorem}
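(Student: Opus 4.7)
The plan is to mirror the proof of Theorem~\ref{thm:SUSY-BHS}, now in the setting of the unconstrained super-Minkowski model: derive a Ward identity from the Lorentz boost symmetry, apply it to a weight built from $\pi$ and $\V{r}$, and then invoke the localisation identity~\eqref{e:Minkowski-localisation} to collapse the resulting superintegral to an ordinary integral against $\pi$.

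The first step is to establish the super-Minkowski analogue of Lemma~\ref{lem:gen-SUSY-H22}: for $\rho \in \Omega^{2\Lambda}(\R^{3\Lambda})$ smooth, supported on $\{\V{u}\cdot\V{u}<0,\V{z}>0\}$, and compactly supported in $\V{r}$, and for smooth $f\colon\Lambda\times\R^\Lambda\to\R$ with rapid decay,
\begin{equation*}
\sum_j \uspin{\rho(\V{u})\,x_j\,f(j,\V{z})}_\beta = \sum_j \uspin{(T_j\rho)(\V{u})\int_0^\infty \E_{j,\V{z}}(f(X_t,\V{L}_t))\,dt}_\beta.
\end{equation*}
Since each $r_j$ is boost-invariant, only the $u$-part of $H_\beta$ contributes, giving $T_k H_\beta = \sum_j \beta_{kj}(x_k z_j - z_k x_j)$. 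On $f(k,\V{z})$ the boost reduces to $T_k f(k,\V{z}) = x_k(\partial_{\ell_k}f)(k,\V{z})$, and an argument identical to the proof of Lemma~\ref{lem:gen-hyp} (integration by parts combined with symmetry of $\beta$) yields $-\sum_k T^{\star}_k f(k,\V{z}) = \sum_k x_k(\cL f)(k,\V{z})$ with $\cL$ the VRJP generator~\eqref{eq:VRJP-generator}. The integrated version then follows from Kolmogorov's backward equation, as in Lemma~\ref{lem:gen-SRW}.

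Next, apply the Ward identity with $\rho(\V{u}) = \pi(i,\V{r})\,x_i/r_i$ for each fixed $i$ and sum over $i$. Boost invariance of each $r_j$ gives $T_k(\pi(i,\V{r})/r_i)=0$, while $T_k x_i = \delta_{ki} z_i$, so $T_k\rho = \delta_{ki}\,\pi(i,\V{r})\,z_i/r_i$, producing
\begin{equation*}
\sum_{i,j}\uspin{\frac{\pi(i,\V{r})}{r_i}\,x_i x_j\,f(j,\V{z})}_\beta = \sum_i \uspin{\frac{\pi(i,\V{r})\,z_i}{r_i}\int_0^\infty \E_{i,\V{z}}(f(X_t,\V{L}_t))\,dt}_\beta.
\end{equation*}
The integrand on the right depends only on $(\V{z},\V{r})$, so localisation~\eqref{e:Minkowski-localisation} reduces each term to $\int_{(0,\infty)^\Lambda} d\V{z}\,\pi(i,\V{z})\int_0^\infty \E_{i,\V{z}}(f(X_t,\V{L}_t))\,dt$, since $z_i/r_i$ becomes $1$ on the diagonal $\V{r}=\V{z}$. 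Summing over $i$ and using that $\pi$ is a probability measure gives $\int_0^\infty \E_{\V{\pi}}f(X_t,\V{L}_t)\,dt$, as required.

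The main obstacle is controlling the unnormalised super-Minkowski superexpectation across the non-compact $\V{u}$-directions. Convergence of the superintegrals is guaranteed by compact support of $\pi$ in $\V{r}$ combined with non-negativity of $H_\beta$ on the causal future; boundary terms at $\V{z}=0$ and on the light cone are excluded by the support assumption on $\rho$; and the boundary term at $t=\infty$ in the integrated Ward identity vanishes because the VRJP's local times diverge, forcing $\E_{k,\V{z}}f(X_t,\V{L}_t)\to 0$ pointwise for $f$ of rapid decay. A standard cutoff-and-approximation argument in $\V{z}$ extends~\eqref{e:Minkowski-localisation} to the integrand at hand, which need not be compactly supported in $\V{z}$.
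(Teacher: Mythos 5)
Your proposal is correct and follows essentially the same route as the paper: apply the boost Ward identity (the super-Minkowski version of Lemma~\ref{lem:gen-SUSY-H22}) with $\rho$ built from $\pi(i,\V{r})x_i/r_i$, compute $T_j\rho = \pi(j,\V{r})z_j/r_j$ using $T_j r_j = 0$ and $T_j x_j = z_j$, and then localise via~\eqref{e:Minkowski-localisation} to collapse $z_j/r_j$ to $1$ on the diagonal $\V{r}=\V{z}$. The only cosmetic difference is that the paper defines $\rho$ as the sum over $i$ at the outset while you fix $i$ and sum afterwards; your additional remarks on convergence and on extending localisation to non-compactly-supported integrands in $\V{z}$ are reasonable amplifications of what the paper leaves implicit.
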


\begin{remark}
  \label{rem:resolve-compact}
  In the notation of Remark~\ref{rem:srw-compact},
  Theorem~\ref{thm:VRJPresolvent} can be compactly rewritten as
  \begin{equation}
    \label{e:VRJPresolvent1}
    \int_{0}^{\infty} \E_{\V{\pi}} f(X_{t},\V{L}_{t})\, dt
    =
    \cb{(\V{x}, \frac{\V{\pi}(\V{r})}{\V{r}})(\V{x},
      \V{f}(\V{z}))}_{\beta}.
  \end{equation}
\end{remark}

The proof of Theorem~\ref{thm:VRJPresolvent} uses that
Lemma~\ref{lem:gen-SUSY-H22} remains true if $[\cdot]_\beta$ is
interpreted as the expectation of the super-Minkowski model,
and then follows the standard route as follows.

\begin{proof}
  Let
  $\rho(\V{u})\bydef \sum_{i\in\Lambda} \pi(i,\V{r})x_i/r_i$, 
  and let $T_{i}$ be the infinitesimal boost given by \eqref{eq:H22-boost}.
  Since $T_ir_i=0$ and $T_ix_i =z_i$ we have $T_j \rho = \pi(j,\V{r})z_j/r_j$.
  Since Lemma~\ref{lem:gen-SUSY-H22} holds
  for the super-Minkowski model,
  we apply \eqref{eq:generic-iso-susy-H22} to obtain
  \begin{equation}
    \sum_{i,j\in\Lambda}\uspin{\frac{\pi(i,\V{r})}{r_i}x_ix_{j} f(j, \V{z})}_{\beta}
    =
    \sum_{j\in\Lambda} \uspin{\frac{z_j}{r_j}\pi(j,\V{r})\int_0^\infty \E_{j,\V{z}}(f(X_t,\V{L}_t))\,dt}_{\beta}
    .
  \end{equation}
  By localisation, i.e., \eqref{e:Minkowski-localisation}, the right-hand side equals
  \begin{equation*}
    \int_{\R_+^\Lambda} d\V{z} \, \sum_{j\in\Lambda}\pi(j,\V{z})
    \int_0^\infty \E_{j,\V{z}}(f(X_t,\V{L}_t))\,dt
    =
    \int_0^\infty \E_{\V{\pi}}(f(X_t,\V{L}_t))\,dt.
    \qedhere
  \end{equation*}
\end{proof}

\section{Application to exponential decay of correlations in spin
  systems} 
\label{sec:applications}

In this section we prove theorems about the exponential decay of
spin-spin correlations. Let $d(i,j)$ denote the graph
distance between vertices $i$ and $j$ in the graph induced by the
edge weights $\beta$; this distance is finite since the induced
graph is finite and connected by assumption.

We first consider the $\HH^{2|2}$ model with constant and non-zero
external field.
\begin{theorem}
  \label{thm:exp-decay-H}
  Consider the $\HH^{2|2}$ model with
    $\sup_{i\in\Lambda} \sum_{j\in\Lambda} \beta_{ij} \leq \beta_*$ 
  and $h_{i}=h>0$ for all $i\in\Lambda$. Let $c(\beta_*,h) \bydef
  \log(1+h/\beta_*)$. Then for all $i,j\in \Lambda$,
  \begin{equation}
    \cb{x_ix_j}_{\beta,h} \leq \frac{1}{h} e^{-c(\beta_*,h)d(i,j)}.
  \end{equation}
\end{theorem}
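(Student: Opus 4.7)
The plan is to use the supersymmetric BFS--Dynkin isomorphism (Theorem~\ref{thm:SUSY-BHS}) to rewrite $\cb{x_i x_j}_{\beta,h}$ as a VRJP integral, and then to extract exponential decay from the Laplace transform of the first-hitting time of $j$. First I would apply Theorem~\ref{thm:SUSY-BHS} with the choice $g(\V{\ell}) = e^{-h\sum_w (\ell_w - 1)}$. Since $\V{h} = h\V{1}$ one has $\cb{x_i x_j}_{\beta,h} = \cb{x_i x_j\, e^{-(\V{h},\V{z}-\V{1})}}_\beta$, and because the VRJP started from $\V{L}_0 = \V{1}$ satisfies $\sum_w(L_t^w - 1) = t$ identically, the isomorphism yields
\begin{equation*}
  \cb{x_i x_j}_{\beta,h} = \int_0^\infty e^{-ht}\,\P_{i,\V{1}}(X_t = j)\,dt.
\end{equation*}
Using the inclusion $\{X_t = j\}\subseteq\{T_j\leq t\}$ with $T_j = \inf\{s \geq 0 : X_s = j\}$, together with Fubini, this gives $\cb{x_i x_j}_{\beta,h} \leq \frac{1}{h}\E_{i,\V{1}}[e^{-hT_j}]$, so it suffices to prove $\E_{i,\V{1}}[e^{-hT_j}] \leq e^{-c\,d(i,j)}$.

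I would establish this last estimate by induction on $d = d(i,j)$. The base case $d = 0$ is trivial. For $d \geq 1$, conditioning on the first jump $(V_1,\sigma_1)$ of the VRJP and applying the strong Markov property gives
\begin{equation*}
  \E_{i,\V{1}}[e^{-hT_j}] = \E_{i,\V{1}}\Bigl[e^{-h\sigma_1}\,\E_{V_1,\V{L}_{\sigma_1}}[e^{-hT_j}]\Bigr],
\end{equation*}
where $\V{L}_{\sigma_1}$ equals $1$ at every vertex except $i$, where it equals $1+\sigma_1$. Because $\V{L}_0 = \V{1}$, the first-jump rate is $R_0 = \sum_w \beta_{iw}\leq\beta_*$, and consequently $\E_{i,\V{1}}[e^{-h\sigma_1}] = R_0/(R_0+h)\leq\beta_*/(\beta_*+h)=e^{-c}$. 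Any neighbor $V_1$ of $i$ satisfies $d(V_1,j)\geq d-1$, so the induction hypothesis (with initial local times $\V{1}$) supplies an $e^{-c(d-1)}$ factor, combining to give the desired $e^{-cd}$.

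The gap that must be closed is that after the first jump the local times $\V{L}_{\sigma_1}$ are no longer $\V{1}$, so that applying the induction hypothesis requires a monotonicity of the form
\begin{equation*}
  \E_{v,L}[e^{-hT_j}]\leq\E_{v,\V{1}}[e^{-hT_j}]\qquad\text{for every }v\neq j\text{ and }L\geq\V{1}\text{ with }L^j=1.
\end{equation*}
Heuristically, additional initial reinforcement at vertices other than $j$ only traps the walker and can therefore only delay its first hit of $j$. Realising this pathwise is, I expect, the main technical hurdle: the natural route is a monotonic coupling of the two VRJPs on the joint Markov state space $(X,\V{L})$ in which, whenever both walkers sit at the same vertex, the $L$-walker's jump destination is chosen to be stochastically no closer to $j$ than the $\V{1}$-walker's, so that the first-hitting times of $j$ can be ordered pathwise. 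The care required is that local times at vertices away from $j$ both accelerate the total jump rate and bias the destinations of the $L$-walker, so the coupling must simultaneously control the waiting times and the embedded discrete-time chain.
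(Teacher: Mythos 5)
Your first two steps coincide exactly with the paper's proof: Theorem~\ref{thm:SUSY-BHS} with $g(\V{\ell})=e^{-h\sum_w(\ell_w-1)}$ gives
$\cb{x_ix_j}_{\beta,h}=\int_0^\infty e^{-ht}\P_{i,\V{1}}(X_t=j)\,dt$,
and the inclusion $\{X_t=j\}\subseteq\{T_j\le t\}$ gives
$\cb{x_ix_j}_{\beta,h}\le\tfrac1h\,\E_{i,\V{1}}[e^{-hT_j}]$
(the paper writes this as $\tfrac1h\,\P_{i,\V{1}}(\tau_j<\infty)$ for the rate-$h$ killed VRJP, which is the same quantity). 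Where you diverge is in bounding $\E_{i,\V{1}}[e^{-hT_j}]$.

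Your induction-plus-monotonicity route has a genuine gap, and you have correctly located it. The monotonicity statement $\E_{v,L}[e^{-hT_j}]\le\E_{v,\V{1}}[e^{-hT_j}]$ for $L\ge\V1$, $L^j=1$, is not self-evidently true, and the coupling you sketch is unlikely to close it. The difficulty is that the two VRJPs accumulate different local-time profiles as soon as their trajectories diverge, and those profiles feed back into the jump rates; there is no natural pathwise ordering of the embedded jump chains, and the $L$-walker's faster total jump rate can in principle \emph{advance} its hitting time (e.g.\ increased local time at a vertex strictly between $v$ and $j$ pulls the walker toward $j$). Even in the narrower case you actually need (extra mass only at $i$ with $d(i,j)=d$), a rigorous coupling argument would be a nontrivial lemma in its own right, and the induction does not obviously preserve the extra structure (one new inflated vertex per step) you would need.

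The paper avoids the issue entirely with a non-inductive argument. The key observation is that the VRJP's jump rate to any \emph{unvisited} vertex is always $\beta_{vw}\cdot 1=\beta_{vw}$, since unvisited vertices retain local time $1$; hence the total escape rate from the set of visited vertices is always at most $\beta_*$, no matter how large the local times inside the visited set become. To reach $j$ the walk must make at least $d(i,j)$ jumps to previously unvisited vertices, and by memorylessness each such escape competes independently with the rate-$h$ killing clock and wins with probability at most $\beta_*/(\beta_*+h)$. This gives $\E_{i,\V{1}}[e^{-hT_j}]\le(\beta_*/(\beta_*+h))^{d(i,j)}$ directly, with no need for induction or any comparison of VRJPs with different initial local times. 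If you want to salvage an inductive argument, the right inductive quantity is the Laplace transform of the time to escape the current set of visited vertices, not the hitting time from a shifted initial condition.
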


\begin{proof}
  Let $\tau_{j}$ be the hitting time of $j$, i.e.,
  $\tau_{j} \bydef \inf \{s\geq 0 \mid X_{s}=j\}$.
  Then by choosing $g$ an exponential in Theorem~\ref{thm:SUSY-BHS},
  \begin{equation}
      \label{eq:edH1}
    \cb{x_ix_j}_{\beta,h}
    =\E_{i,\V{1}} \int_{0}^{\infty}1_{X_{s}=j}e^{-hs}\,ds
      =\E_{i,\V{1}}1_{\tau_{j}<\infty}\int_{\tau_{j}}^{\infty}1_{X_{s}=j} e^{-hs}\,ds
       \leq \frac{1}{h} \P_{i,\V{1}}(\tau_{j}<\infty).
  \end{equation}
  The inequality follows as the integral is at most
  $\int_{0}^{\infty}e^{-hs}ds = h^{-1}$.

  If $\tau_{j}<\infty$ then there are at least $d(i,j)$ times at which
  a rate $h$ exponential clock does \emph{not} ring before a rate
  $\beta_*$ clock, as there are at least $d(i,j)$ jumps to previously
  unvisited vertices on any path from $i$ to $j$. The probability of a
  rate $h$ clock ringing only after some rate $\beta_{ij}$ clock is
  at most $\beta_*/(\beta_*+h)$. Each of these events are independent by the
  memorylessness of the exponential, and hence
  \begin{equation*}
    \P_{i,\V{1}}(\tau_{j}<\infty) \leq (\frac{\beta_*}{\beta_*+h})^{d(i,j)}
    = e^{-c(\beta_*,h)d(i,j)}
  \end{equation*}
  Combined with \eqref{eq:edH1} this proves the theorem.
\end{proof}

\begin{remark}
  Theorem~\ref{thm:exp-decay-H} gives a positive rate
  $\log(1+h/\beta_*)\sim ch$ of exponential decay for some $c>0$ for
  any value of $\beta$. For small $\beta$, i.e., high temperatures, it
  is known that the rate stays uniformly bounded away from $0$ as
  $h\downarrow 0$ \cite{MR2736958,MR3189433}.  The rate is expected to
  be bounded away from $0$ for any $\beta$ when the graph $\Lambda$
  tends to $\Z^2$.  On the other hand, for $\Lambda \uparrow \Z^d$
  with $d\geq 3$ it is conjectured that the rate behaves
  asymptotically as $\sim c\sqrt{h}$ as $h\downarrow 0$.
\end{remark}

It would be interesting to obtain an analogue of
Theorem~\ref{thm:exp-decay-H} for the $\HH^{n}$ model by using
Theorem~\ref{thm:BHS}. This would require an appropriate estimate on
the $z$-field to control the initial local times of the VRJP. We
do not pursue this direction here.

For the hemispherical spin models, the estimates on the $z$-field are trivial
because $|z_i|\leq 1$, and we thus consider both the $\bbS^{n}_{+}$ model and the $\bbS^{2|2}_{+}$ model.
For $\bbS^{2|2}_+$ we have only defined the
superexpectation of compactly supported observables. To define the
superexpectation of non-compactly supported observables requires a
treatment of superintegrals with boundaries; since we do not need this
general treatment we instead define the two-point function
$\cb{{x_{i} x_{j}}}_{\beta,h}$ for the $\bbS^{2|2}_{+}$ model by
$\cb{x_{i} x_{j}}_{\beta,h} \bydef \lim_{n\to\infty} \cb{x_{i} x_{j}
  f_{n}(\V{z})}_{\beta,h}$ where $f_{n}$ is a sequence of smooth and
bounded approximations to $1_{\V{z}>0}$.  The proof of the following
theorem shows that this limit exists.

\begin{theorem}
  \label{thm:decay-hemi}
  Consider the $\bbS^{n}_{+}$  model with
  $\sup_{i\in\Lambda}\sum_{j\in\Lambda}\beta_{ij} \leq \beta_{*}$,
  and let $c(\beta_*) = \log (1-e^{-\beta_*})$.
  Then for all $i,j\in\Lambda$,
  \begin{equation}
    \label{eq:decay-hemi}
    \avg{x_{i}x_{j}}_{\beta,h}
    \leq e^{-c(\beta_*)d(i,j)}.
  \end{equation}
  The same result holds for the superexpectation
  $\cb{x_{i}x_{j}}_{\beta,h}$ of the $\bbS^{2|2}_{+}$ model.
\end{theorem}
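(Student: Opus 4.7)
The plan is to run the argument of Theorem~\ref{thm:exp-decay-H} adapted to the hemispherical models: I use the BFS--Dynkin isomorphisms (Theorem~\ref{thm:BHS-hemi} and Theorem~\ref{thm:SUSY-hem}) to reduce the two-point function to a hitting probability of the VDJP, then estimate that probability geometrically using that the VDJP's local times are bounded by $1$.

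First I rewrite the two-point function. Setting $g_h(\V{\ell})\bydef e^{-(\V{h},\V{1}-\V{\ell})}$ so that $\cb{\,\cdot\,}_{\beta,h}=\cb{\,\cdot\,g_h(\V{z})}_\beta$, Theorem~\ref{thm:SUSY-hem} (applied to compactly supported approximations of $g_h$, combined with \eqref{eq:VDJP-nokill} which identifies $g_h(\V{L}_t)$ as the density of the VDJP killed at rate $\V{h}$) gives, in the supersymmetric case,
\begin{equation*}
  \cb{x_ax_b}_{\beta,h}=\int_0^\infty \E^h_{a,\V{1}}(1_{X_t=b})\,dt=\E^h_{a,\V{1}}(1-L^b_\death)\leq \P_{a,\V{1}}(\tau_b<\death),
\end{equation*}
where $\tau_b\bydef\inf\{s:X_s=b\}$ and the last step drops the killing (which only enlarges the hitting probability); supersymmetric localisation \eqref{eq:s22loc} makes $\cb{1}_{\beta,h}=1$, so no further normalisation is needed. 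Theorem~\ref{thm:BHS-hemi} applied analogously in the non-supersymmetric case, together with $z_a,z_b\leq 1$ on $\bbS^n_+$, gives
\begin{equation*}
  \cb{x_ax_b}_{\beta,h}\leq \sup_{\V{\ell}\in(0,1]^\Lambda}\P_{a,\V{\ell}}(\tau_b<\death)\cdot \cb{1}_{\beta,h},
\end{equation*}
and after normalising the problem reduces, in both cases, to bounding $\P_{a,\V{\ell}}(\tau_b<\death)$ uniformly in $\V{\ell}\in(0,1]^\Lambda$.

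The main estimate is $\P_{a,\V{\ell}}(\tau_b<\death)\leq (1-e^{-\beta_*})^{d(a,b)}$; this is the only geometric input. At any visit of the VDJP to a vertex $k$ with current local times $\V{L}_t\leq\V{1}$, the walk either jumps at total rate $\sum_j\beta_{kj}L^j_t\leq\beta_*$ or dies at the deterministic time $L^k_t\leq 1$ at which $L^k$ reaches $0$. The probability that the jump happens first is therefore
\begin{equation*}
  1-\exp\Bigl(-L^k_t\textstyle\sum_j \beta_{kj}L^j_t\Bigr)\leq 1-e^{-\beta_*}.
\end{equation*}
Any trajectory from $a$ to $b$ must make at least $d(a,b)$ successful jumps, so iterating this bound via the strong Markov property at the successive jump times yields the claim, matching the bound $e^{-c(\beta_*)d(a,b)}$ in the theorem statement.

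The only real technicality is that Theorems~\ref{thm:BHS-hemi} and~\ref{thm:SUSY-hem} are stated for compactly supported $g$, while $g_h$ is merely smooth and bounded. I would handle this by standard truncation and dominated convergence, exploiting that $\death\leq|\Lambda|$ almost surely under $\P_{a,\V{\ell}}$ with $\V{\ell}\leq\V{1}$ (since $\sum_i L^i_t$ decreases at rate $1$ and starts at most at $|\Lambda|$), which provides a uniform integrable majorant for the $t$-integrand.
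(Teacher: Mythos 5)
Your proposal is correct and follows essentially the same route as the paper: apply Theorem~\ref{thm:SUSY-hem} (resp.\ the normalised Theorem~\ref{thm:BHS-hemi}) to reduce $\cb{x_ix_j}_{\beta,h}$ to the probability that the VDJP reaches $j$ before its death time $\death$, and then bound that probability by $(1-e^{-\beta_*})^{d(i,j)}$ via the observation that, at each visited vertex, the jump rate is at most $\beta_*$ while the remaining time to death is at most $1$, so the walk survives a step with probability at most $1-e^{-\beta_*}$. The minor differences — you keep the field $\V{h}$ throughout via the killed VDJP and the identity $\int_0^\death 1_{X_t=b}\,dt = 1-L^b_\death$, whereas the paper simply notes the killing factor is $\leq 1$ and reduces to $\V{h}=\V{0}$; and you make explicit the almost-sure bound $\death\leq|\Lambda|$ to justify passing from compactly supported $g_n\uparrow 1$ to the limit — are presentational rather than substantive.
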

\begin{proof}
  We first consider $\bbS^{2|2}_{+}$. Let $f_{n}$ be a sequence of
  smooth and bounded approximations to $1_{\V{z}>0}$. Letting
  $\E_{i,\V{1}}$ be the expectation for a VDJP with initial local time
  $\V{1}$, Theorem~\ref{thm:SUSY-hem} implies
  \begin{equation*}
    \cb{x_{i}x_{j}}_{\beta,h}
    = \lim_{n\to\infty} \cb{x_{i}x_{j} f_{n}(\V{z})}_{\beta,h}
    = \lim_{n\to\infty }\E_{i,\V{1}}\int_{0}^{\infty}f_{n}(\V{L})1_{X_{t}=j}e^{-\sum_{v}h_{v}L^{t}_{v}}\, dt,
  \end{equation*}
  To obtain upper bounds we may assume, without loss of generality,
  that $\V{h}=\V{0}$.
  By definition, $X_t$ dies once the local time at any vertex reaches
  $0$.  Since $f_{n}$ is asymptotically bounded above by one,
  it therefore suffices to bound the probability that $X_{t}$
  reaches $j$.

  By the definition of the VDJP, for each $r\in\Lambda$ the jump rate
  out of $r$ is bounded above by $\beta_*$.
  Thus for each
  $k\in\N$ there is probability at least $e^{-\beta_*}$ 
  the
  walk $X_{t}$ dies after its $k$th jump and before its $(k+1)$st
  jump. The probability $X_{t}$ reaches $j$ is at most the probability
  that $X_{t}$ does not die before taking $d(i,j)$ steps, and hence
  \begin{equation*}
    \cb{x_{i}x_{j}}_{\beta,h}
    \leq (1-e^{-\beta_*})^{d(i,j)}
    = e^{-c(\beta_*)d(i,j)}.
  \end{equation*}
  This completes the proof for $\bbS^{2|2}_{+}$.  For $\bbS^{n}_{+}$,
  we use (the normalised form of) Theorem~\ref{thm:BHS-hemi} in
  place of Theorem~\ref{thm:SUSY-hem}. The argument above applies
  pointwise in the initial local time, so using $0\leq z_{i}\leq 1$ we
  obtain the same conclusion.
\end{proof}

\begin{remark}
  \label{rem:exp-dec-mcb}
  A result closely related to Theorem~\ref{thm:decay-hemi} is given
  in~\cite[Theorem~2]{MR0441179}.
\end{remark}

\appendix

\section{Introduction to supersymmetric integration}
\label{sec:SUSY-intro}

This appendix gives a self-contained introduction to the mathematics
of supersymmetry that is relevant for this article.  For complementary
treatments, see in particular \cite{MR914369,MR1843511,MR2525670}.  In
Appendix~\ref{sec:fSUSY} we discuss some further aspects of
supersymmetry that are relevant to this article, but that are not
needed to understand the main text.

\subsection{Integration of differential forms}
\label{sec:IDF}

We begin by reviewing the important example of integration of
differential forms on Euclidean space $\R^N$. Let $x_{1},\dots, x_{N}$
be coordinates on $\R^{N}$.  A \emph{differential form} on $\R^N$ can
be written as
\begin{equation}
  \label{eq:F-exp}
  F = F_0 + \cdots + F_N
\end{equation}
where $F_0 \in C^\infty(\R^N)$ is a \emph{$0$-form,} i.e., an ordinary
function, and $F_p$ is a \emph{$p$-form}, i.e., a nonzero sum of terms of the form
\begin{equation}
  f_{i_{1},\dots, i_{p}}(x_1,\dots, x_N) \, dx_{i_1} \wedge \cdots \wedge
  dx_{i_p}, \qquad 1\leq i_{j}\leq N, \;\; 1\leq j\leq p,
\end{equation}
where $f_{i_{1},\dots, i_{p}} \in C^\infty(\R^N)$,
the coordinates are viewed as functions $x_i\colon \R^N \to \R$ in $C^\infty(\R^N)$,
and the differentials $dx_i$
are the generators of a Grassmann algebra. 
This means that the $dx_{i}$ are formal variables that are
multiplied with the anti-commuting wedge product:
\begin{equation} \label{e:dx-anti}
  dx_i \wedge dx_j = -dx_j \wedge dx_i.
\end{equation}
In particular, $dx_i \wedge dx_i = 0$.
Later, the $\wedge$ will often be omitted. By extending the wedge
product to differential forms by linearity, we obtain a unital
associative algebra over $C^{\infty}(\R^{N})$. This is the exterior algebra
of differential forms on $\R^N$, which we denote $\Omega(\R^N)$.

The form $F_p$ in \eqref{eq:F-exp} is the \emph{degree
  $p$} part of $F$. We say $F$ has \emph{degree $p$} or
\emph{is a $p$-form} if $F=F_p$.
Since $dx_{i}\wedge dx_{i}=0$,
there are no forms of degree greater than $N$. A form $F$ of
degree $N$ is said to be of \emph{top degree} and such an $F$
can be written as
\begin{equation}
  \label{eq:td}
  F(\V{x}) = f(\V{x}) \, dx_1 \wedge \cdots \wedge dx_N
\end{equation}
for some $f\in C^{\infty}(\R^{N})$, where we abbreviate
$\V{x} = (x_1,\dots, x_N)$.  The anticommutativity of the wedge
product implies that the order of the differentials determines an
overall sign in \eqref{eq:td}. Keeping this in mind, the integral of a
top degree form $F$ is defined by
\begin{equation}
  \int_{\R^N} F \bydef \int_{\R^N} f(\V{x}) \, dx_1 \cdots dx_N
\end{equation}
where the right-hand side is an ordinary integral with respect to
Lebesgue measure.  For $p<N$ the integral of a $p$-form $F_{p}$ is
defined to be zero: $\int_{\R^{N}} F_{p} \bydef 0$.  Having defined
the integral on $p$-forms for all $p$, we extend the definition of the
integral to the entire algebra $\Omega(\R^{N})$ of differential forms
by linearity.

\begin{example}[Change of variables]
\label{ex:CoV-df}
The differential notation and the use of the wedge product
is consistent with, and motivated by, the following change of variable formula.
Let $\Phi\colon \R^N\to\R^N$ be an orientation preserving
diffeomorphism. Then by the change of variables formula from
calculus
\begin{align}
  \int f(x_1, \dots, x_N) \, dx_1 \wedge \cdots \wedge dx_N
  &=
  \int f(\Phi_1(\V{x}), \dots, \Phi_N(\V{x})) (\det D\Phi) \, dx_1 \wedge \cdots \wedge dx_N
  \nnb
  &=
    \int f(\Phi_1(\V{x}), \dots, \Phi_N(\V{x})) \, d\Phi_1(\V{x}) \wedge \cdots \wedge d\Phi_N(x)
\end{align}
where $D\Phi$ is the Jacobian matrix of $\Phi$ and the second
equality has made use of the definition
\begin{equation}
  d\Phi_i(\V{x}) = \sum_{j=1}^{N} \ddp{\Phi_i(\V{x})}{x_j} dx_j,
\end{equation}
which leads, by a calculation, to the identity
\begin{equation}
  d\Phi_{1}(\V{x})\wedge \cdots \wedge d\Phi_{N}(\V{x})
  = (\det D\Phi) \;
  dx_{1}\wedge \cdots \wedge dx_{N}.
\end{equation}
\end{example}

\subsection{Odd and even forms}
A differential form is \emph{even} if it is a sum of $p$-forms with
all $p$ even and it is \emph{odd} if it is a sum of $p$-forms with all
$p$ odd. We say a form is \emph{homogeneous} if it is either even or
odd. We can decompose a general form $F$ as
\begin{equation}
  F = F_{\mathrm{even}} + F_{\mathrm{odd}},
  \qquad 
  \Omega(\R^N) = \Omega_{\mathrm{even}}(\R^N) \oplus \Omega_{\mathrm{odd}}(\R^N),
\end{equation}
where $F_{\mathrm{even}}$ is the sum of the degree
$p$ parts of $F$ with $p$ even, and similarly for $F_{\mathrm{odd}}$.
As the wedge product of a $p$-form with a $q$-form is either $0$
or a $(p+q)$-form, the exterior algebra
equipped with the wedge product is a $\Z_{2}$-graded algebra.
$\Z_{2}$-graded algebras are also called \emph{superalgebras}.
Formally, this means that if we define the parity of a homogeneous form as
\begin{equation}
  \alpha(F) \bydef \begin{cases} 0 \in \Z_2, \qquad F = F_{\mathrm{even}}\\
    1 \in \Z_2,\qquad F = F_{\mathrm{odd}}
  \end{cases}
\end{equation}
then $\alpha(F\wedge G) = \alpha(F) + \alpha(G)$ mod 2. 
A calculation shows that for homogeneous $F,G$
\begin{equation}
  F\wedge G = (-1)^{\alpha(F)\alpha(G)}G\wedge F,
\end{equation}
and in particular, even elements commute
with all other elements by linearity.

\subsection{Berezin integral}
\label{sec:BerezinInt}

In this section we introduce Grassmann algebras and the Berezin
integral. Integration of differential forms as introduced in the
previous sections constitute a special case.

\subsubsection{Grassmann algebras}
\label{sec:grassmann-algebras}

Let $\Omega^M$ be a Grassmann algebra with generators
$\xi_1, \dots, \xi_M$; as the subscripts suggest we will
always assume there is a fixed (but arbitrary) order on the
generators. Thus $\Omega^M$ is the unital associative algebra
generated by the $(\xi_i)_{i=1}^{M}$ subject to the anticommutation
relations
\begin{equation}
  \xi_i \xi_j + \xi_j\xi_i = 0, \quad 1\leq i\leq j\leq M.
\end{equation}
Let $\Omega^M(\R^N)$ be the algebra over $C^{\infty}(\R^{N})$
generated by the $(\xi_{i})_{i=1}^{M}$. Elements of this algebra can
be written as
\begin{equation}
  \label{eq:b-gf}
  \mathop{\sum_{I\subset \{1,\dots,M\}}}_{I = \{i_{1},\dots, i_{p}\}}
  f_{I}(\V{x}) \, \xi_{i_{1}}\cdots\xi_{i_{p}} 
\end{equation}
where $f_{I}\in C^{\infty}(\R^{N})$ for each
$I\subset \{1,\dots, M\}$, and we have arranged the product of
generators according to the given fixed order:
$i_{1}<i_{2}<\dots <i_{p}$.

\begin{example}
  The differentials $\xi_{i} = dx_i$
  are an instance of a Grassmann algebra, and the algebra of
  differential forms on $\R^N$ can be identified with $\Omega^N(\R^N)$.
\end{example}

We continue to use the term \emph{form} for elements of
$\Omega^M(\R^N)$ when $N\neq M$.  The notion of the degree of a form
and the $\Z_{2}$-grading that we defined for differential forms
extends to this more general context.

\subsubsection{Integration}
\label{sec:integration-forms}

For $i\in \{1,2,\dots, M\}$ the\emph{ left-derivative} $\ddp{}{\xi_i}\colon \Omega^M \to \Omega^M$
is the unique linear map determined by
\begin{equation}
  \ddp{}{\xi_i} (\xi_iF) = F \quad \text{if $\xi_i F \neq 0$}, \qquad \ddp{}{\xi_i} 1 = 0.
\end{equation}
We sometimes write $\partial_{\xi_i} = \ddp{}{\xi_{i}}$. Note that
$\partial_{\xi_i}$ is an \emph{anti-derivation}: if $F$ is a
homogeneous form, then
\begin{equation}
  \partial_{\xi_i} (FG) = (\partial_{\xi_i}F)G + (-1)^{\alpha(F)} F (\partial_{\xi_i}G).
\end{equation}
The left-derivative extends naturally to an anti-derivation on
$\Omega^M(\R^N)$ by defining
\begin{equation}
\partial_{\xi_{i}} ( f(\V{x})\xi_{i_{1}}\dots \xi_{i_{p}})
= f(\V{x}) \partial_{\xi_{i}} (\xi_{i_{1}}\dots \xi_{i_{p}}).
\end{equation}

\begin{example}
  The left-derivative gives a convenient formulation of the integral
  of a differential form. Let $F \in \Omega^N(\R^N)$ be a differential
  form and write $\xi_i = dx_i$. Then
  \begin{equation}
    \label{eq:exint}
    \int F = \int_{\R^N} dx_1 \cdots dx_N \, \partial_{\xi_N} \, \cdots\, \partial_{\xi_1} \, F
    = \int_{\R^N} d\V{x} \, \partial_{\V{\xi}} \, F
  \end{equation}
  where the left-hand side is the integral as a differential form in
  the sense of Section~\ref{sec:IDF}, and the last equality made use
  of the definition
  $\partial_{\V{\xi}} \bydef \partial_{\xi_{N}}\dots \partial_{\xi_{1}}$.
  Note that the order used in defining $\partial_{\V{\xi}}$ matters.
\end{example}

The notation on the right-hand side of \eqref{eq:exint} is called the
\emph{Berezin integral}.  This is a useful notion
because it is possible to change variables in $\V{x}$ and $\V{\xi}$
separately, as will be discussed below in Section~\ref{sec:cov}.  The
Berezin integral generalises to $N \neq M$ as follows.
\begin{definition}
  \label{def:BerezinInt}
  For $F\in\Omega^{M}(\R^{N})$, the \emph{Berezin integral of $F$} is
  \begin{equation}
    \label{eq:BerezinInt}
    \int F \bydef \int_{\R^{N}}dx_{1}\cdots dx_{N}
    \, \partial_{\xi_{M}}\cdots\partial_{\xi_{1}} \, F
    = \int_{\R^{N}} d\V{x} \, \partial_{\V{\xi}} \, F,
  \end{equation}
  where the last equality is by the definitions
  $d\V{x} = dx_{1}\dots dx_{N}$ and
  $\partial_{\V{\xi}} \bydef \partial_{\xi_{M}}\dots \partial_{\xi_{1}}$.  We say a
  form $F$ is \emph{integrable} if it can be written as a finite sum of forms
  of the type $f(\V{x}) \, \xi_{i_1}\dots\xi_{i_p}$ with $f$
  integrable on $\R^N$.
\end{definition}

The expression $d\V{x}\, \partial_{\V{\xi}}$ on the right-hand side
of \eqref{eq:BerezinInt} is an example of a \emph{superintegration form}.
More generally a superintegration form is given by
$d\V{x} \, \partial_{\V{\xi}}\, F$ for $F$ an even integrable form, and
integration with respect to this superintegration form is defined by
$\int G = \int_{\R^{N}} d\V{x}\,\partial_{\V{\xi}} \, FG$.

\subsubsection{Functions of forms}
\label{sec:functions-forms}

Suppose $g\in C^{\infty}(\R^{k})$. We will use
$\alpha = (\alpha_{1},\dots,\alpha_{k})$ to denote multiindices,
and we will also use the notation
\begin{equation*}
  g^{(\alpha)}(\V{x})
  \bydef \ddp{}{x_1^{\alpha_{1}}}\dots
  \ddp{}{x_k^{\alpha_{k}}}g(x), \qquad x^{\alpha} \bydef
  x_{1}^{\alpha_{1}}\cdots x_{k}^{\alpha_{k}}.
\end{equation*}
\begin{definition}
  Let $g \in C^\infty(\R^k)$ and $F^1, \dots F^k \in \Omega^M(\R^N)$
  be even forms. Then $g(F^1, \dots, F^k) \in\Omega^M(\R^N)$ is
  defined by the following formula, where the sum runs over all
  multiindices $\alpha$:
  \begin{equation}
    \label{eq:fof}
    g(F^1, \dots, F^k)
    \bydef \sum_{\alpha} \frac{1}{\alpha!} g^{(\alpha)}(F^1_0, \dots, F^k_0) (F-F_0)^\alpha.
  \end{equation}
\end{definition}
Note that the product defining $(F-F_{0})^{\alpha}$ is the wedge
product, i.e., this is shorthand for
$(F^{1}-F^{1}_{0})^{\alpha_{1}}\wedge \dots \wedge
(F^{k}-F^{k}_{0})^{\alpha_{k}}$, and $(F^{1}-F^{1}_{0})^{\alpha_{1}}$
is the $\alpha_{1}$-fold wedge product of this form with itself. There
is no ambiguity in the ordering since all forms are assumed even.  The
formal Taylor expansion in \eqref{eq:fof} is finite because forms of
degree greater than $N$ do not exist.  As a simple example of a
function of a form, the reader may wish to verify that
\begin{equation}
  e^{-x_1^2-\xi_1\xi_2} = e^{-x_1^2}(1-\xi_1\xi_2).
\end{equation}

\subsection{Gaussian integrals and localisation}

Let $A \in \R^{N\times N}$ be positive definite.  The $O(2)$-invariant
Gaussian measure on $\R^{2N}$ associated to the matrix $A$ has density
\begin{equation} 
  \label{e:Gauss-density}
  e^{-\frac12(\V{x},A\V{x})-\frac12(\V{y},A\V{y})} (\det A) \prod_{i=1}^N \frac{dx_i \, dy_i}{2\pi}.
\end{equation}
Let $\xi_1, \dots, \xi_N, \eta_1, \dots, \eta_N$ be generators of the
Grassmann algebra $\Omega^{2N}$, and define
  \begin{equation}
    \label{eq:det-G-pre}
    \partial_{\V{\eta}} \partial_{\V{\xi}}
    \bydef \partial_{\eta_N} \partial_{\xi_N}
    \cdots \partial_{\eta_1} \partial_{\xi_1}  \qquad
    (\V{\xi},A\V{\eta}) \bydef \sum_{i=1}^N A_{ij} \xi_i\eta_j.
  \end{equation}
A computation shows that
\begin{equation} \label{e:det-Grassmann}
  \partial_{\V{\eta}} \partial_{\V{\xi}} e^{(\V{\xi},A\V{\eta})}
  = \partial_{\V{\eta}} \partial_{\V{\xi}} \frac{1}{N!}
  (\sum_{i=1}^{N}A_{ij}\xi_{i}\eta_{j})^{N} = 
  \det A .
\end{equation}
\begin{remark}
  \label{rem:GG}
  The form
  $e^{(\V{\xi},A\V{\eta})} = e^{\frac12(\V{\xi},A\V{\eta})-\frac12
    (\V{\eta},A\V{\xi})} \in\Omega^{2N}$ is called a
  \emph{Grassmann Gaussian}.  The corresponding Grassmann Gaussian
  expectation $\avg{F} \bydef \q{F}/\q{1}$ where
  $\q{F}\bydef\partial_{\V{\eta}} \partial_{\V{\xi}}
  (e^{(\V{\xi},A\V{\eta})}F) \in \R$ for $F \in \Omega^{2N}$, and
  hence $\q{1}=\det A$ by \eqref{e:det-Grassmann}, behaves in many
  ways like a Gaussian integral.
\end{remark}
Using \eqref{e:det-Grassmann}, the Gaussian density
\eqref{e:Gauss-density} can be written as
\begin{equation}
  \label{eq:density}
  \prod_{i=1}^N \frac{dx_i \, dy_i \, \partial_{\eta_i} \partial_{\xi_i}} {2\pi}
  e^{-\frac12(\V{x},A\V{x})-\frac12(\V{y},A\V{y})+\frac12(\V{\xi},A\V{\eta})-\frac12 (\V{\eta},A\V{\xi})}
  .
\end{equation}
The form given by $(2\pi)^{-N}$ times the exponential
in~\eqref{eq:density} is called the \emph{super-Gaussian form}.  Thus
the Gaussian density is the coefficient of the top degree part of the
super-Gaussian form.
 
To lighten the notation, we will now write $u_i \bydef (x_i,y_i,\xi_i,\eta_i)$
and call $u_i$ a \emph{supervector}.
For supervectors $u_i$ and $u_j$ define a form
\begin{equation}
  u_i \cdot u_j
  \bydef
  x_ix_j+y_iy_j - \xi_i\eta_j + \eta_i\xi_j.
\end{equation}
We unite the supervectors $u_i$ into $\V{u} \bydef (u_i)_{i=1}^N$ and
introduce the following shorthand notation for the form
that occurs in the exponent of \eqref{eq:density}:
\begin{equation}
  \label{eq:ff}
  (\V{u}, A\V{u}) \bydef \sum_{i,j=1}^{N} A_{ij} u_{i}\cdot u_{j}.
\end{equation}
For a form $F$ we define \emph{the superintegral of $F$} by
\begin{equation}
  \label{eq:R22-int}
  \int_{(\R^{2|2})^{N}}F \bydef \frac{1}{(2\pi)^{N}}
  \int_{\R^{2N}}d\V{x} \, d\V{y} \, \partial_{\V{\eta}}\, \partial_{\V{\xi}} \, F,
\end{equation}
where $d\V{x} \bydef dx_{N}\dots dx_{1}$ and similarly for
$d\V{y}$. Then, since the coefficient of the top degree part of
\eqref{eq:density} is the density of a Gaussian,
\begin{equation}
  \label{eq:loc-baby}
  \int_{(\R^{2|2})^N}e^{-\frac12 (\V{u},A\V{u})} = 1
  .
\end{equation}
The fact that this superintegral is one is a
simple example of \emph{localisation} for superintegrals of
supersymmetric forms. The rest of this section describes this
phenomenon.

The \emph{supersymmetry generator}
$Q\colon \Omega^{2N}(\R^{2N}) \to \Omega^{2N}(\R^{2N})$ is defined as
\begin{equation}
  Q \bydef \sum_{i=1}^{N} Q_i\,,
  \qquad 
  Q_i \bydef \xi_i \ddp{}{x_i} + \eta_i \ddp{}{y_i} - x_i\ddp{}{\eta_i} + y_i \ddp{}{\xi_i}.
\end{equation}
Thus $Q$ formally exchanges the even and odd generators of $\Omega^{2N}(\R^{2N})$:
\begin{equation}
  Qx_i = \xi_i, \quad Qy_i = \eta_i, \quad Q\xi_i = -y_i, \quad Q\eta_i = x_i.
\end{equation}
A form $F\in \Omega^{2N}(\R^{2N})$ is defined to be
\emph{supersymmetric} if $QF=0$.
Note that $Q$ is an anti-derivation, and hence $Q(F_{1}F_{2})=0$ if $F_{1}$ and $F_{2}$
are both supersymmetric forms.

\begin{example}
  \label{ex:udotu}
  The following forms are supersymmetric:
  \begin{equation} 
    \label{e:udotudef}
    u_i \cdot u_j
    = x_ix_j+y_iy_j - \xi_i\eta_j + \eta_i\xi_j.
  \end{equation}
\end{example}

Much of the magic of supersymmetry is due to the fundamental
\emph{localisation theorem}:

\begin{theorem} \label{thm:F0}
  Suppose $F \in\Omega^{2N}(\R^{2N})$ is supersymmetric and integrable. Then
  \begin{equation}
    \int_{(\R^{2|2})^{N}} F = F_0(0)
  \end{equation}
  where the right-hand side is the degree-$0$ part of $F$ evaluated at $0$.
\end{theorem}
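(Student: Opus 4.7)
The plan is to prove localisation by a deformation argument. First I would reduce to the case that $F$ is even: since $Q$ reverses $\Z_2$-grading, the condition $QF=0$ descends to both the even and odd parts of $F$, the odd part carries no top-degree Grassmann component and so integrates to zero, and $F_0$ lies in the even part.

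The deformation is built from the odd form $B \bydef \sum_{i=1}^N (x_i\eta_i - y_i\xi_i)$. A short computation from the action of $Q$ on coordinates gives $QB = -(\V{u},\V{u})$, and a second application (using that $Q^2$ is an ordinary derivation, since $Q$ is an anti-derivation) yields $Q^2B = 0$. Define $\Phi(t) \bydef \int_{(\R^{2|2})^N} F e^{tQB} = \int_{(\R^{2|2})^N} F e^{-t(\V{u},\V{u})}$ for $t\geq 0$. The two analytic ingredients are the Ward identity $\int QG = 0$ for integrable $G$, and the product identity $F(QB)e^{tQB} = Q(FBe^{tQB})$. The Ward identity follows by inspecting the four terms of each $Q_i$: those of type $\xi_i\partial_{x_i}$ and $\eta_i\partial_{y_i}$ produce total $x_i$- or $y_i$-derivatives inside the Berezin integral (vanishing by ordinary integration by parts in view of integrability), while the terms $-x_i\partial_{\eta_i}$ and $y_i\partial_{\xi_i}$ can be commuted past $\partial_{\V{\eta}}\partial_{\V{\xi}}$ to produce a repeated Grassmann derivative, which annihilates any form. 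The product identity follows from $QF=0$ combined with $Q(e^{tQB})=0$, which holds because $QB$ is even and $Q^2B=0$. These together yield $\Phi'(t)=0$, hence $\int F = \Phi(0) = \lim_{t\to\infty}\Phi(t)$.

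To evaluate the limit I would rescale $(\V{x},\V{y},\V{\xi},\V{\eta}) \mapsto (\V{x},\V{y},\V{\xi},\V{\eta})/\sqrt{t}$. This preserves the Berezin measure on $(\R^{2|2})^N$ because the bosonic Jacobian $t^{-N}$ cancels against the fermionic Jacobian $t^N$, and it converts $t(\V{u},\V{u})$ into $(\V{u},\V{u})$, giving $\Phi(t) = \int F(\V{u}/\sqrt{t}) e^{-(\V{u},\V{u})}$. Expanding $F$ in the Grassmann generators, every monomial of total Grassmann degree $k \geq 1$ acquires a prefactor $t^{-k/2}$ and vanishes as $t\to\infty$, while the degree-zero coefficient $F_0(\V{x}/\sqrt{t},\V{y}/\sqrt{t})$ tends pointwise to $F_0(0)$. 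The elementary super-Gaussian computation $\int e^{-(\V{u},\V{u})} = 1$ then gives $\lim_{t\to\infty}\Phi(t) = F_0(0)$, as required.

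The main obstacle is justifying dominated convergence in this last step under the bare integrability hypothesis: one needs a $t$-uniform integrable majorant for the coefficient functions of $F(\V{u}/\sqrt{t})e^{-(\V{u},\V{u})}$. I would handle this by first establishing the identity for a dense class of forms with sufficient regularity (e.g.\ Schwartz coefficients, or smooth forms of moderate growth) and then extending by continuity, or equivalently by slightly strengthening the smoothness and decay assumptions on $F$.
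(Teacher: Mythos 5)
Your proposal is correct and follows essentially the same route as the paper's proof: deform by $e^{tQB}$ with $QB$ proportional to the quadratic form, show $t$-independence using $QF=0$, $Q^2B=0$, and the Ward identity $\int Q(\cdot)=0$, then evaluate the $t\to\infty$ limit by the unimodular rescaling $u\mapsto u/\sqrt t$. The paper carries this out in complex coordinates (with $\lambda=\sum z_i\bar\zeta_i$ in place of your $B=\sum(x_i\eta_i-y_i\xi_i)$) and presents the Laplace limit before the exactness-in-$t$ step, but these differences are cosmetic.
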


To keep this introduction to supersymmetry self-contained, we provide
the beautiful and instructive proof of this theorem in
Appendix~\ref{app:localisation}. To prove an important
corollary of the theorem we need the following chain rule, proven
in~\cite[p.59]{MR1843511} or \cite[Solution to Exercise~11.4.3]{rg-brief}.

\begin{lemma}
\label{lem:Qchain}
The supersymmetry generator $Q$ obeys the chain rule for even forms,
in the sense that if $K = (K_j)_{j=1}^{J}$ is a finite collection of
even forms, and if $f \colon \R^J \to \C$ is $C^\infty$, then
\begin{equation}
 \label{e:Qcr}
     Q(f(K)) = \sum_{j=1}^J f_j(K) QK_{j},
\end{equation}
where $f_j$ denotes the partial derivative of $f$ with respect to the
$j$th coordinate.
\end{lemma}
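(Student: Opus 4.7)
The plan is to verify the identity by applying $Q$ directly to the Taylor-series definition \eqref{eq:fof} of $f(K)$ and then reorganising the resulting sum.

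First I would record two auxiliary facts. Since each $K_j$ is even, each factor $f^{(\alpha)}(K^0)(K-K^0)^\alpha$ appearing in \eqref{eq:fof} is an even form. The anti-derivation property of $Q$ then reduces to the ordinary Leibniz rule on such products, and for the purely even forms $K_j-K_j^0$ one obtains the combinatorial identity
\begin{equation}
  Q\bigl((K-K^0)^\alpha\bigr) = \sum_{j=1}^{J} \alpha_j (K-K^0)^{\alpha-e_j} Q(K_j-K_j^0).
\end{equation}
Second, since $f^{(\alpha)}(K^0)$ is a smooth function of the even coordinates $(x,y)$ only, the definition of $Q$ gives $Qf^{(\alpha)}(K^0) = \sum_i (\xi_i\partial_{x_i}+\eta_i\partial_{y_i})f^{(\alpha)}(K^0)$, so by the classical chain rule for smooth functions,
\begin{equation}
  Q\bigl(f^{(\alpha)}(K^0)\bigr) = \sum_{j=1}^{J} f^{(\alpha+e_j)}(K^0)\, QK_j^0.
\end{equation}

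Next I would combine these. Applying $Q$ termwise to \eqref{eq:fof} and using the Leibniz rule for products of even forms,
\begin{equation}
  Q\bigl(f(K)\bigr) = \sum_\alpha \frac{1}{\alpha!} \Bigl[ Q\bigl(f^{(\alpha)}(K^0)\bigr)(K-K^0)^\alpha + f^{(\alpha)}(K^0)\, Q\bigl((K-K^0)^\alpha\bigr)\Bigr].
\end{equation}
Substituting the two identities above yields two sums indexed by $j$: one involving $QK_j^0$ with multiindex $\alpha$, and one involving $Q(K_j-K_j^0)$ with multiindex $\alpha-e_j$. After reindexing the second sum via $\beta = \alpha - e_j$ (noting that $(\beta_j+1)/(\beta+e_j)! = 1/\beta!$), the two sums can be written over a common multiindex with combined bracket $QK_j^0 + Q(K_j-K_j^0) = QK_j$. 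This gives
\begin{equation}
  Q\bigl(f(K)\bigr) = \sum_{j=1}^{J} \Bigl(\sum_\alpha \frac{1}{\alpha!} f^{(\alpha+e_j)}(K^0)(K-K^0)^\alpha\Bigr) QK_j,
\end{equation}
and the bracketed quantity is exactly the definition of $f_j(K)$ via \eqref{eq:fof} applied to the partial derivative $f_j$, completing the proof.

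The computation is entirely formal and the only potential obstacle is the reindexing and cancellation step. The key conceptual inputs — that $Q$ is a derivation on even forms and that its restriction to smooth functions of $(x,y)$ obeys the classical chain rule — are both immediate from the definition of $Q$, so the argument should fit in a short paragraph once the bookkeeping above is carried out.
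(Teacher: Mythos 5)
Your proof is correct, and it is the standard one. The paper itself does not give a proof of this lemma but instead cites external sources (\cite[p.59]{MR1843511} and \cite[Solution to Exercise~11.4.3]{rg-brief}); the argument in those references is precisely the one you carry out, namely applying the anti-derivation $Q$ term by term to the Taylor-series definition \eqref{eq:fof}, using the ordinary chain rule for $Q$ on the degree-zero parts $K_j^0 \in C^\infty$, the power rule for $(K-K^0)^\alpha$, and the multiindex cancellation $\alpha_j/\alpha! = 1/(\alpha - e_j)!$ to recombine the two sums into $\sum_j f_j(K)\,QK_j$.

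Two small points worth making explicit if you write this up in full. First, when you say the anti-derivation property ``reduces to the ordinary Leibniz rule,'' the precise statement is: since $Q$ is odd, $Q(FG) = (QF)G + (-1)^{\alpha(F)}F(QG)$, and the sign is $+1$ whenever the left factor $F$ is even --- which holds for every factor you split off, since $f^{(\alpha)}(K^0)$ and $(K-K^0)^\alpha$ are even. Second, after applying Leibniz, the odd forms $QK_j^0$ and $Q(K_j - K_j^0)$ end up in various positions inside the products; moving them all to the right (as you do) is justified because even forms commute with everything, and it is needed so that the two families of terms can be added coefficient-wise before invoking $QK_j^0 + Q(K_j - K_j^0) = QK_j$. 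With those two observations spelled out, the argument is complete.
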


Let $\V{u}\V{u}^{T}$ denote
the collection $(u_{i}\cdot u_{j})_{i,j=1}^{N}$ of forms defined in \eqref{e:udotudef}.

\begin{corollary}
  \label{cor:localisation}
  For any smooth function
  $f\colon \R^{N\times N} \to \R$ with sufficient decay,
  \begin{equation}
    \int_{(\R^{2|2})^{N}} f(\V{u}\V{u}^T) = f(\V{0}).
  \end{equation}
\end{corollary}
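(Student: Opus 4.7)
The plan is to apply the localisation theorem (Theorem~\ref{thm:F0}) to the form $F \bydef f(\V{u}\V{u}^T) \in \Omega^{2N}(\R^{2N})$. This requires two things: that $F$ is supersymmetric, and that its degree-$0$ part evaluated at the origin equals $f(\V{0})$. Integrability is provided by the sufficient decay assumption on $f$, which propagates to the coefficient functions of $F$ since each $u_i \cdot u_j$ has degree-$0$ part $x_i x_j + y_i y_j$ together with Grassmann corrections that are polynomial in the even coordinates.

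To establish supersymmetry, I would note that each $u_i \cdot u_j$ is an even form and is supersymmetric by Example~\ref{ex:udotu}, i.e., $Q(u_i \cdot u_j) = 0$ for every $i,j$. The chain rule for $Q$ on even forms (Lemma~\ref{lem:Qchain}), applied to the collection $K = (u_i \cdot u_j)_{i,j=1}^N$ and the smooth function $f\colon \R^{N \times N} \to \R$, then yields
\begin{equation}
Q\bigl(f(\V{u}\V{u}^T)\bigr) = \sum_{i,j=1}^N f_{ij}(\V{u}\V{u}^T)\, Q(u_i \cdot u_j) = 0,
\end{equation}
where $f_{ij}$ denotes the partial derivative of $f$ with respect to the $(i,j)$ entry. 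Hence $F$ is supersymmetric.

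To extract the degree-$0$ part, recall that the degree-$0$ part of $u_i \cdot u_j$ is $x_i x_j + y_i y_j$, so $(\V{u}\V{u}^T)_0 = (x_i x_j + y_i y_j)_{i,j=1}^N$. Since all the $u_i \cdot u_j$ are even, only the $\alpha=0$ term in the Taylor expansion \eqref{eq:fof} contributes to the degree-$0$ part, giving $F_0 = f\bigl((x_i x_j + y_i y_j)_{i,j=1}^N\bigr)$, which when evaluated at $\V{x} = \V{y} = 0$ equals $f(\V{0})$. Applying Theorem~\ref{thm:F0} concludes the proof.

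The only mild obstacle is verifying that $F$ is integrable in the sense of Definition~\ref{def:BerezinInt}. Expanding $F$ via \eqref{eq:fof} shows that its coefficient functions are finite sums of products of derivatives of $f$ evaluated at $(x_i x_j + y_i y_j)_{i,j}$ with polynomials in $\V{x},\V{y}$; the hypothesis of sufficient decay on $f$ (and its derivatives) ensures these are integrable on $\R^{2N}$, so there is nothing further to check.
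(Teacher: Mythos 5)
Your proof is correct and follows exactly the same route as the paper's: verify $QF=0$ via the chain rule (Lemma~\ref{lem:Qchain}) applied to the supersymmetric forms $u_i\cdot u_j$, note $F_0(\V{0})=f(\V{0})$, and invoke Theorem~\ref{thm:F0}. The paper's proof is more terse, but the substance is identical; your additional remarks on integrability and the degree-$0$ extraction are sound though not spelled out in the paper.
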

\begin{proof}
  Let $F = f(\V{u}\V{u}^T)$. Then $F_0(\V{0}) = f(\V{0})$ and
  $QF = \sum_{ij} f_{ij}(\V{u}\V{u}^T) Q(u_i\cdot u_j) = 0$ by the
  chain rule of Lemma~\ref{lem:Qchain}, 
  where $f_{ij}$ denotes the partial derivative of $f$ with respect
  to the $ij$-th coordinate.  The claim follows from
  Theorem~\ref{thm:F0}.
\end{proof}

\subsection{Change of generators}
\label{sec:cov}

Recall the general expression~\eqref{eq:b-gf} for a form
$F\in\Omega^{M}(\R^{N})$. We will sometimes write $F(\V{x},\V{\xi})$ or
$F(x_{1},\dots, x_{N},\xi_{1},\dots, \xi_{M})$ to denote a form
written in this way. 

\begin{definition}
  \label{def:gens}
  A collection of even elements $(x_i)_{i=1}^N$ and odd elements $(\xi_j)_{j=1}^M$
  is a \emph{set of generators} for $\Omega^M(\R^N)$ if every $F \in
  \Omega^M(\R^N)$ can be written in the form \eqref{eq:b-gf}.
\end{definition}

Note that Example~\ref{ex:CoV-df} provided an example of a
change of generators
\begin{equation}
  \label{eq:CoV-df-revisit}
  y_{i} = \Phi_{i}(x_{1},\dots, x_{N}), \quad 
  \eta_i = dy_i = \sum_{j=1}^{N} \ddp{\Phi_i}{x_j}(x_{1}, \dots, x_{N}) \, dx_j
\end{equation}
along with a corresponding change of variables formula.

It is both possible and useful to
change between sets of generators in the sense of
Definition~\ref{def:gens} without the even and odd generators changing
together. Moreover, there is an extension of the usual change of
variables formula that applies in this setting.
This formula relies on the notion of \emph{superdeterminant} (or Berezinian)
  of a \emph{supermatrix} $M$:
\begin{equation}
  \label{eq:sdet}
  \sdet M \bydef \det (A-BD^{-1}C)\det D^{-1} \quad \text{for }
  M = \begin{pmatrix} A & B \\ C & D \end{pmatrix},
\end{equation}
where the entries of $M$ are elements of a Grassmann algebra, the
entries of the blocks $A$ and $D$ are even, the entries of the blocks
$B$ and $C$ are odd, and $D$ is invertible. Invertibility means
invertibility in the (commutative) algebra of even elements of the
Grassmann algebra.  The next result
is~\cite[Theorem~2.1]{MR914369}. In the theorem rapid decay
  means each of the coefficient functions of $F$ have rapid decay.
\begin{theorem}
  \label{thm:susy-cov}
  Suppose $y_{i}=y_{i}(\V{x},\V{\xi})$ and $\eta_{i} =
  \eta_{i}(\V{x},\V{\xi})$ are a set of generators. Then for any $F$
  with sufficiently rapid decay,
  \begin{equation}
    \label{eq:susy-cov}
    \int d\V{y} \, \partial_{\V{\eta}}\, F(\V{y},\V{\eta}) \,
    \sdet(M) = \int d\V{x}\, \partial_{\V{\xi}}\, F(\V{x},\V{\xi}),
  \end{equation}
  where $M$ is of the form in \eqref{eq:sdet} with entries
  $A_{ij} = \ddp{y_{i}}{x_{j}}$, $B_{ij} = \ddp{y_{i}}{\xi_{j}}$,
  $C_{ij} = \ddp{\eta_{i}}{x_{j}}$,
  $D_{ij} = \ddp{\eta_{i}}{\xi_{j}}$.
\end{theorem}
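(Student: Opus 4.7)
The plan is to reduce the general change of generators to a small set of elementary ones, then verify the formula directly for each. Two structural facts do most of the work. First, the Berezinian is multiplicative, $\sdet(M_1 M_2) = \sdet(M_1)\,\sdet(M_2)$, which one checks by a direct computation from the definition \eqref{eq:sdet} using the block factorisation below. Second, if a change of generators $(\V x,\V\xi) \mapsto (\V y,\V\eta)$ is realised as the composition of two successive changes of generators, the corresponding supermatrices $M$ multiply by a super chain rule for the even and odd partials. Together, these let me reduce \eqref{eq:susy-cov} to verifying it for each factor in an LDU factorisation of $M$.

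Concretely, I would use
\begin{equation*}
\begin{pmatrix} A & B \\ C & D \end{pmatrix}
= \begin{pmatrix} I & BD^{-1} \\ 0 & I \end{pmatrix}
  \begin{pmatrix} A - BD^{-1}C & 0 \\ 0 & D \end{pmatrix}
  \begin{pmatrix} I & 0 \\ D^{-1}C & I \end{pmatrix},
\end{equation*}
which corresponds to a composition of three elementary changes of generators: (a) a ``block-diagonal'' change, in which $\V y=\V y(\V x)$ depends only on $\V x$ and $\V\eta$ is obtained from $\V\xi$ by a $\V x$-dependent linear map; (b) a ``lower shear'' in which $\V y=\V x$ but the odd generators are shifted by a nilpotent Grassmann-valued expression in $(\V x,\V\xi)$; and (c) an ``upper shear'' in which $\V\eta=\V\xi$ but the even generators are shifted by a nilpotent Grassmann-valued expression. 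The Berezinians of the two shear factors are $1$, and the Berezinian of the diagonal factor is $\det(A - BD^{-1}C)\det(D)^{-1}$, so by multiplicativity the target formula reduces to verifying it for each of these three elementary types.

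For case (a), the $\V x\mapsto\V y$ substitution is an ordinary diffeomorphism on $\R^N$ contributing the Jacobian $\det(\partial\V y/\partial\V x)$ via the classical change of variables formula, while the linear change $\V\eta = D(\V x)\V\xi$ contributes $\det(D)^{-1}$ because, when applied to a top-degree form in $\V\xi$, the operator $\partial_{\V\eta}$ replaces $\partial_{\V\xi}$ by a Grassmann-linear action whose Berezinian is computed as in \eqref{e:det-Grassmann}. Multiplying these two factors reproduces $\sdet$ of the diagonal block.

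The main obstacle is case (b) (and symmetrically (c)), where an even generator is shifted by a nilpotent Grassmann-valued quantity $\V y = \V x + \V\nu(\V x,\V\xi)$. Here I would Taylor-expand $F(\V y,\V\eta)$ in powers of $\V\nu$; the expansion is finite because $\V\nu$ is a polynomial in the $\xi_j$ with vanishing degree-$0$ part, hence nilpotent. The resulting bosonic derivatives can be moved onto $1$ by integration by parts in the $\V x$ variables (using the rapid decay hypothesis on $F$), producing a form that, after applying $\partial_{\V\eta}=\partial_{\V\xi}$, cancels all the correction terms and leaves exactly $\int d\V x\,\partial_{\V\xi}F(\V x,\V\xi)$. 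This bookkeeping of signs between the anticommuting derivatives and the Taylor terms is the only real work; once it is in place, combining the three elementary calculations via multiplicativity of $\sdet$ yields \eqref{eq:susy-cov} in full generality.
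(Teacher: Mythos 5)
The paper does not supply a proof of Theorem~\ref{thm:susy-cov}: it simply records the statement and cites~\cite[Theorem~2.1]{MR914369}, so there is no in-text argument to compare against. Your plan follows the standard textbook route (multiplicativity of $\sdet$, decomposition into elementary changes, verification case by case), and that route does work; the issues below are about where you have glossed over the real difficulties.

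The main gap is the step from ``factor the Jacobian supermatrix $M=LD'U$'' to ``factor the change of generators into three elementary ones whose Jacobians are $L$, $D'$, $U$.'' These are not the same statement. To use the chain rule you must exhibit intermediate sets of generators $(\V x',\V\xi')$, $(\V x'',\V\xi'')$ realising each factor, and verify they are genuinely generating sets in the sense of Definition~\ref{def:gens}. Moreover, your description of the diagonal factor as a change ``in which $\V y=\V y(\V x)$ depends only on $\V x$'' is too strong: the block $A-BD^{-1}C$ generically has nilpotent even parts of positive degree (coming from $BD^{-1}C$), so $\V y$ will also depend on $\V\xi$ through nilpotent corrections. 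To make the reduction honest you need to split the diagonal factor further into its degree-zero part (a genuine diffeomorphism of $\R^N$, handled by the classical change of variables) and a residual nilpotent shift, which then falls into your case (b)/(c). So the elementary cases are really: ordinary diffeomorphisms, linear changes of the odd generators, and nilpotent shifts --- not the three factors of a single matrix LDU.

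The case you flag as the ``main obstacle'' (nilpotent shifts of an even generator) is where the actual work is, and it is only sketched. The Taylor-expand-and-integrate-by-parts computation you describe is exactly what the paper carries out in Example~\ref{ex:chvarmixed}, but only for $N=1$, $M=2$, and the paper warns that the general case requires careful sign bookkeeping between anticommuting derivatives and Taylor terms. Without that bookkeeping the cancellation you assert is not established. Finally, the multiplicativity of $\sdet$ is asserted without proof; it is a true and standard fact, but it is itself usually proven via an LDU-type argument, so if you intend to ``check it by a direct computation'' you should say what that computation is, lest the argument circle back on itself.
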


Implicit in Theorem~\ref{thm:susy-cov} is that a change of
generators always results in an invertible $D$, so the
superdeterminant is well-defined.

\begin{example} \label{ex:chvarmixed}
  Let $x,\xi_1,\xi_2$ be generators for $\Omega^2(\R)$. Then
  the set of forms
  $\{x + g(x)\xi_1\xi_2, \xi_{1}, \xi_{2}\}$
  is also a set of generators, and
  \begin{equation}
    \label{eq:gcov-1}
    \int dx\, \partial_{\xi_1}\partial_{\xi_2} \, F(x,\xi_1,\xi_2)
    =
    \int dx\, \partial_{\xi_1}\partial_{\xi_2} \,
    F(x+g(x)\xi_1\xi_2,\xi_1,\xi_2) (1+g'(x)\xi_1\xi_2)  .
  \end{equation}
\end{example}

It is instructive to verify the claims of the previous example
by hand, and we briefly do so.  To see the claim that these forms are a set
of generators, recall that by definition 
\begin{equation}
  \label{eq:gcov-2}
  F(x+g(x)\xi_1\xi_2,\xi_1,\xi_2) = F(x,\xi_1,\xi_2) + F'(x,\xi_1,\xi_2) g(x)\xi_1\xi_2.
\end{equation}
Letting $y\bydef g(x)\xi_{1}\xi_{2}$, a general form of $\{x +
  g(x)\xi_1\xi_2, \xi_{1}, \xi_{2}\}$ is thus, for some functions $a,b,c,d$,
\begin{equation*}
  a(x+y) + b(x+y)\xi_{1} + c(x+y)\xi_{2} + d(x+y)\xi_{1}\xi_{2}
  =
  a(x) + b(x)\xi_{1} + c(x)\xi_{2} +
  (d(x)+a'(x)g(x))\xi_{1}\xi_{2},
\end{equation*}
which clearly shows a general form in $\{x,\xi_{1},\xi_{2}\}$ can be
expressed as a form in $\{x+g(x)\xi_{1}\xi_{2},\xi_{1},\xi_{2}\}$.

To verify \eqref{eq:gcov-1} integrate \eqref{eq:gcov-2}. Integrating
the term containing $F'$ by parts yields
\begin{equation}
  \int dx \, \partial_{\xi_1}\partial_{\xi_2} \,
  F(x+g(x)\xi_1\xi_2,\xi_1,\xi_2)
  =
  \int dx \, \partial_{\xi_1}\partial_{\xi_2} \,
  F(x,\xi_1,\xi_2) (1-g'(x)\xi_1\xi_2).
\end{equation}
Since
$F(x+g(x)\xi_1\xi_2,\xi_1,\xi_2) g'(x)\xi_1\xi_2 = F(x,\xi_1,\xi_2)
g'(x)\xi_1\xi_2$, \eqref{eq:gcov-1} follows. This can
  alternately be verified by computing the superdeterminant of
  \begin{equation}
    \label{eq:exM}
    M = \begin{pmatrix} 1+g'(x)\xi_{1}\xi_{2} & \xi_{2} & -\xi_{1} \\
      0 & 1 & 0 \\ 0 & 0 & 1 \end{pmatrix}.
  \end{equation}

\section{Further aspects of symmetries and supersymmetry}
\label{sec:fSUSY}

This appendix discusses some additional aspects of
supersymmetry. First, we briefly introduce complex coordinates, which
have often been used in the literature  (see, e.g.,~\cite{MR2525670}).
Second, we prove Theorem~\ref{thm:F0}. The remaining sections
discuss symmetries and Ward identities, and in particular,
highlight how Theorem~\ref{thm:F0} is an example of a Ward
identity arising from an infinitesimal supersymmetry.

\subsection{Complex coordinates}
\label{sec:CC}

In Appendix~\ref{sec:SUSY-intro} we introduced Grassmann algebras over $\R$
and forms given by smooth functions with values in $\R$. Sometimes it
is convenient to work with Grassmann algebras over $\C$ and
complex-valued functions, and many discussions of supersymmetry do so,
see \cite{MR2525670} and references therein. To facilitate comparisons
with the literature we briefly introduce complex coordinates and
relate them to the presentation of Appendix~\ref{sec:SUSY-intro}.

To introduce complex coordinates we set
\begin{equation}
  z = \frac{1}{\sqrt{2}} ( x + i y), \quad \bar z = \frac{1}{\sqrt{2}} (x - i y), \quad
  \zeta = \frac{1}{\sqrt{2 i}} (\xi + i\eta), \quad \bar\zeta =
  \frac{1}{\sqrt{2i}} (\xi-i\eta).
\end{equation}
Correspondingly, define
\begin{equation}
  \ddp{}{z_i} = \frac{1}{\sqrt{2}} \pa{\ddp{}{x_i} -i \ddp{}{y_i}}, \quad
  \ddp{}{\bar z_i} = \frac{1}{\sqrt{2}} \pa{\ddp{}{x_i} +i \ddp{}{y_i}},
\end{equation}
and define $\partial_{\zeta_i}$ and $\partial_{\bar\zeta_i}$ to be the antiderivations on $\Omega^{2N}$ such that
\begin{equation}
  \ddp{}{\zeta_i} \zeta_j
  =
  \ddp{}{\bar\zeta_i} \bar\zeta_j = \delta_{ij}, \qquad
  \ddp{}{\zeta_i} \bar\zeta_j
  =
  \ddp{}{\bar\zeta_i} \zeta_j = 0.
\end{equation}
Up to an irrelevant factor of $\sqrt{i}$ (
a constant factor plays no role in  determining if a form is supersymmetric),
the supersymmetry generator can be written in complex coordinates as
\begin{equation} \label{e:Qdef}
  Q = \sum_{i=1}^N Q_i, \quad Q_i = \zeta_i \ddp{}{z_i} + \bar\zeta_i \ddp{}{\bar z_i} - z_i \ddp{}{\zeta_i} + \bar z_i \ddp{}{\bar\zeta_i}.
\end{equation}
Hence it acts on the complex generators by
\begin{equation}
  Qz_i = \zeta_i, \quad Q\bar z_i = \bar \zeta_i, \quad Q\zeta_i = -z_i, \quad Q\bar\zeta_i = \bar z_i.
\end{equation}
Writing $u_{i}=(z_{i},\zeta_{i})$ for $i=1,\dots, N$, the following
forms are supersymmetric:
\begin{equation} 
  u_i \cdot \bar u_j \bydef z_i \bar z_j + \zeta_i \bar \zeta_j.
\end{equation}

\paragraph{Realisation by differential forms}

Complex coordinates can be realised in terms of differential forms as
follows.  Denote the coordinates of $\R^{2}$ by $x$ and $y$ with
differentials $dx$ and $dy$, and set
\begin{equation}
  z = \frac{1}{\sqrt{2}} ( x + i y), \quad \bar z = \frac{1}{\sqrt{2}}
  (x - i y), \quad dz = \frac{1}{\sqrt{2i}} (dx + i dy), \quad
  d\bar z = \frac{1}{\sqrt{2i}}( dx - i dy).
\end{equation}

\subsection{Proof of Theorem~\ref{thm:F0}}
\label{app:localisation}

\renewcommand{\phi}{z}
\renewcommand{\phib}{\bar z}
\renewcommand{\psi}{\zeta}
\renewcommand{\psib}{\bar \zeta}

The proof of Theorem~\ref{thm:F0} will use the complex
coordinates introduced in Appendix~\ref{sec:CC}, and will also make use of the
following terminology and facts. A form is called \emph{$Q$-closed}
(\emph{supersymmetric}) if $QF=0$ and it is called \emph{$Q$-exact} if
$F=QG$ for some form $G \in \Omega^{2N}(\R^{2N})$. The $Q$-closed
forms $u_{i}\cdot u_{j}$ from Example~\ref{ex:udotu} are also
$Q$-exact, as can be verified by checking
\begin{equation}
  z_i\bar z_j + z_j\bar z_i + \zeta_i\bar\zeta_j - \bar\zeta_i\zeta_j= Q\lambda_{ij}, \quad \lambda_{ij} \bydef z_i\bar\zeta_j + z_j\bar\zeta_i.
\end{equation}
\begin{proof}[Proof of Theorem~\ref{thm:F0}]
Any integrable form $F$ can be written as
$K=\sum_{\alpha}F^{\alpha}\psi^{\alpha}$ with (i) $\psi^{\alpha}$  a
monomial in $\{\psi_{i},\psib_{i}\}_{i=1}^{N}$ and (ii)
$F^{\alpha}$ an integrable function of $\{\phi_{i},\phib_{i}\}_{i=1}^{N}$.
To emphasise this, we write $K=K(\V{\phi}, \V{\phib}, \V{\psi},\V{\psib})$.
To simplify notation we write $\int$ in place of $\int_{(\R^{2|2})^{N}}$.

\smallskip\noindent
\emph{Step 1.}
Let $S= \sum_{i=1}^{N} (\phi_i\phib_i + \psi_i \psib_i)$. 
We prove the following version of Laplace's Principle:
\begin{equation} \label{e:localisation-Laplace}
  \lim_{t\to\infty} \int e^{-t S}F = F_0(\V{0}) .
\end{equation}
Let $t>0$.
We make the change of generators
$\phi_{i} = \frac{1}{\sqrt{t}}\phi'_{i}$ and $\psi_{i} = \frac{1}{\sqrt{t}}\psi'_{i}$.
This transformation has unit Berezinian.
Let $\omega \bydef - \sum_{i=1}^{N}\psi_{i} \psib_{i}$. 
After dropping the primes, we obtain
\begin{equation}
    \int e^{-t S}F
    =
    \int e^{-\sum_{i=1}^{N}\phi_{i} \phib_{i} + \omega}
    F(
    \tfrac{1}{\sqrt{t}}\V{\phi},
    \tfrac{1}{\sqrt{t}}\V{\phib},
    \tfrac{1}{\sqrt{t}}\V{\psi},
    \tfrac{1}{\sqrt{t}}\V{\psib}) ,
\end{equation}
where $\tfrac{1}{\sqrt{t}}\V{\phi} \bydef \{\tfrac{1}{\sqrt{t}}\phi_i\}_{i=1}^N$,
and similarly for the other generators.
To evaluate the right-hand side, we expand $e^{\omega}$ and and obtain
\begin{align}
\label{e:etSK}
    \int e^{-t S}F
    &=
    \sum_{n=0}^{N}
    \int e^{-\sum_{i=1}^{N}\phi_{i} \phib_{i}} \frac{1}{n!}\omega^{n}
    F(
    \tfrac{1}{\sqrt{t}}\V{\phi},
    \tfrac{1}{\sqrt{t}}\V{\phib},
    \tfrac{1}{\sqrt{t}}\V{\psi},
    \tfrac{1}{\sqrt{t}}\V{\psib})
    .
\end{align}
We write $K=K^0+G$,  where $K^0$ is the degree zero part of $K$.
The contribution of $K^0$ to \eqref{e:etSK} involves only the
$n=N$ term and equals
\begin{equation} \label{e:etSK0}
  \int e^{-t S}F^0 =
  \int e^{-\sum_{i=1}^{N}\phi_{i} \phib_{i}} \frac{1}{N!} \omega^{N}
    F^0(
    \tfrac{1}{\sqrt{t}}\V{\phi},
    \tfrac{1}{\sqrt{t}}\V{\phib})
    ,
\end{equation}
so by the continuity of $F_0$,
\begin{align}
    \lim_{t\to\infty}
    \int e^{-t S}F_0
    &=
    F_{0}(\V{0})
    \int e^{-\sum_{i=1}^{N}\phi_{i} \phib_{i}} \frac{1}{N!}\omega^{N}
    =
    F_{0}(\V{0})
    \int e^{- S}
    .
\end{align}
By \eqref{eq:loc-baby} with $A$ the identity matrix,
this proves that
\begin{align}
    \lim_{t\to\infty}
    \int e^{-t S}F_0
    &=
    F_{0}(0)
    .
\end{align}

To complete the proof of \eqref{e:localisation-Laplace}, it remains to
show that $\lim_{t\to\infty}\int e^{-t S}G = 0$.  As above,
\begin{equation}
    \int e^{-t S}G
    =
    \sum_{n=0}^{N} \int e^{-\sum_{i=1}^{N}\phi_{i} \phib_{i}} \frac{1}{n!}
    \omega^{n}\,
    G \left(
    \tfrac{1}{\sqrt{t}}\V{\phi},
    \tfrac{1}{\sqrt{t}}\V{\phib},
    \tfrac{1}{\sqrt{t}}\V{\psi},
    \tfrac{1}{\sqrt{t}}\V{\psib}
    \right).
\end{equation}
Since $G$ has no degree-zero part, the term with $n=N$ is zero.  Terms
with smaller values of $n$ require factors $\psi_i\psib_i$ for some $i$ from $G$,
and these factors carry inverse powers of $t$.  They therefore vanish in the limit, and the
proof of \eqref{e:localisation-Laplace} is complete.

\smallskip
\noindent
\emph{Step 2.}
The Laplace approximation is exact:
\begin{equation} \label{e:Laplace-exact}
  \int e^{-t S}F \quad \text{is independent of $t \ge 0$.}
\end{equation}
To prove this, recall that $S = Q\lambda$.
Also, $Q e^{-S}=0$ by the chain rule of Lemma~\ref{lem:Qchain}, 
and $QF=0$ by assumption.  Therefore,
\begin{equation}
     \label{e:dlam}
    \frac{d}{dt} \int e^{-t S}F
    =
    -\int e^{-t S} S  F
    =
    -\int e^{-t S} (Q \lambda)  F
    =
    -\int Q\left(e^{-t S} \lambda F \right)
    =
    0 ,
\end{equation}
since the integral of any $Q$-exact form is zero, because it can be
written as a sum of derivatives (whose integral vanishes due to the
assumption of rapid decay) and a form of degree lower than the top
degree (whose integral vanishes by definition).

\smallskip\noindent \emph{Step 3.} Finally, we combine Laplace's
Principle \eqref{e:localisation-Laplace} and the exactness of the
Laplace approximation \eqref{e:Laplace-exact}, to obtain the desired
result
\begin{equation*}
    \int F
    =
    \lim_{t\rightarrow \infty}
    \int e^{-t S}F
    =
    F_{0} (\V{0}).\qedhere
\end{equation*}
\end{proof}

\subsection{Symmetries}
\label{sec:symmetries}

This appendix briefly reviews symmetries in the context of smooth
manifolds, to prepare the way for a discussion of symmetries of superalgebras.

\subsubsection{Infinitesimal symmetries}

For a smooth manifold $M$, 
infinitesimal symmetries are described by the infinite-dimensional Lie algebra of
smooth vector fields, $\mathrm{Vect}(M)$.  Vector fields act on
functions through the Lie derivative, which 
associates to every vector field $X \in \mathrm{Vect}(M)$ a \emph{derivation}
$T_X\colon C^{\infty}(M)\rightarrow C^{\infty}(M)$.
We recall that a derivation is
a linear map that obeys the Leibniz rule $T_X(fg) = T_X(f)g + fT_X(g)$. Concretely,
if $M$ is $n$-dimensional and $X$ is represented in local coordinates as
$X = \sum_{\alpha=1}^{n}g(u^1,\dots,u^n)\ddp{}{u^\alpha}$, then
$T_X(f) = \sum_{\alpha=1}^{n}g(u^1,\dots,u^n)\ddp{f}{u^\alpha}$.

In fact, every derivation on $C^{\infty}(M)$ arises from a vector
field, and hence there is an isomorphism
$\mathrm{Vect}(M) \simeq \mathrm{Der}(C^{\infty}(M))$.
Thus we can replace geometric objects (vector fields) with algebraic objects
(derivations).
The perspective will be useful for
superspaces, as their definition is fundamentally algebraic rather than geometric.

\subsubsection{Integral symmetries}
Rather than examining the entire Lie algebra
$\mathrm{Der}(C^{\infty}(M))$, it is often useful to consider
subalgebras that respect additional structures on the manifold. We
will be interested in the following case where $M$ carries a measure
$\mu$.  Let $\int_{M}f$ denote the integral of a function
$f\colon M\to \R$ with respect to the measure $\mu$. We call
$\int_{M}$ an \emph{integral on $M$}.
\begin{definition}\label{def:int-sym}
  Let $\int_M$ be an integral on a smooth manifold $M$.  A derivation
  $T\in \mathrm{Der}(C^\infty(M))$ is an \emph{infinitesimal symmetry} of the
  integral if for all $f \in C^\infty(M)$ with rapid decay
  \begin{equation}
    \label{e:T-symmetry}
    \int_M T f = 0.
  \end{equation}
\end{definition}
Infinitesimal symmetries lead to integration by parts formulas,
otherwise known as \emph{Ward identities}: 
suppose $T$ is a symmetry of $\int_M$, and that $f,g\in C^{\infty}(M)$ 
have rapid decay. Then 
\begin{equation}
\int_M T(fg) = 0,
\end{equation}
since $fg$ has rapid decay. Since $T$ acts as a derivation, we obtain
the Ward identity
\begin{equation}
\int_M (Tf) g = -\int_M f (Tg).
\end{equation}

For spin systems, different infinitesimal symmetries are obtained
depending on whether we examine the Gibbs measure $e^{-H_\beta}\, d\V{u}$ or 
the underlying 
measure $d\V{u}$. Ward identities for one lead to (anomalous) Ward identities
for the other. For instance, letting $\uspin{f}_\beta =
\int_{M^{\Lambda}}f e^{-H_\beta} \, d\V{u}$ denote an unnormalised
expectation, and letting $T$ be an infinitesimal symmetry of $d\V{u}$,
\begin{equation}
\int_{M^{\Lambda}} T(fe^{-H_\beta}) = 0, \quad \text{i.e.,} \quad
\int_{M^{\Lambda}} (Tf - f(TH_\beta))e^{-H_\beta} = 0
\end{equation}
and hence
\begin{equation}
\uspin{Tf}_\beta = \uspin{f(TH_\beta)}_\beta.
\end{equation}

\subsubsection{Global symmetries}

For spin system Gibbs measures
$\uspin{{F}}_\beta = \int_{M^{\Lambda}}{F} \, e^{-H_\beta}d\V{u}$, an
important role is played by derivations
$T \in \mathrm{Der}(C^{\infty}(M^\Lambda))$ which can be written in
the form
\begin{equation}
T \bydef \sum_{i \in \Lambda} T_i,
\end{equation}
where each $T_{i}$ is a copy of a single site derivation
\begin{equation}
T_i = \sum_{\alpha = 1}^{n}f_\alpha(u_i)\ddp{}{u_i^\alpha}
\end{equation}
with $f_\alpha$ independent of $i\in\Lambda$.
We call these \emph{diagonal derivations}.
If a diagonal derivation is an  infinitesimal symmetry of the
Gibbs measure, then we say that it is a \emph{global symmetry}. The
spin system Hamiltonians in this paper are of the form
$H_\beta(\V{u}) = \frac{1}{4}\sum_{i,j\in\Lambda}\beta_{ij} (u_i-u_j)^2$ with
$(u_{i}-u_{j})^{2}\bydef (u_{i}-u_{j})\cdot (u_{i}-u_{j})$ for some inner product. Hence
the global symmetries are equivalently
those diagonal derivations which satisfy
\begin{equation}
T(u_i-u_j)^2 = 0
\end{equation}
for all $i,j \in \Lambda$. These correspond to the infinitesimal
isometries of the target space, and form a representation of a finite
dimensional Lie algebra.

For the GFF on $\R^{n}$, the global symmetries are of the form
\begin{equation}\label{eq:euc-global}
T \bydef \sum_{i\in \Lambda}T_i,\quad T_i = \sum_{\alpha,\beta = 1}^n R_{\alpha\beta} u^\alpha_i \ddp{}{u^\beta_i} + \sum_{\gamma = 1}^{n}S_{\gamma}\ddp{}{u^\gamma_i},  
\end{equation}
where $R$ is an $n\times n$ real skew-symmetric matrix and $S$ is a
real vector in $\R^{n}$.  The global symmetries of $\R^{n}$ hence form
a representation of the Euclidean Lie algebra
$\mathfrak{so}(n) \ltimes \R^{n}$ under the Lie bracket of
derivations.  Global symmetries of Minkowski space $\R^{n,1}$ are of
the same form as \eqref{eq:euc-global}, but $R$ is now skew-symmetric
with respect to the Minkowski inner product, i.e.,
\begin{equation}
  R^{T}J + JR = 0, \quad J = \mathrm{diag}(-1,1,\dots,1).
\end{equation}
This gives a representation of the Poincare Lie algbera
$\mathfrak{so}(n,1) \ltimes\R^{n,1}$.

Global symmetries of the $\HH^{n}$ and
$\bbS^{n}_+$ spin models are induced from Lorentz/orthogonal
symmetries of $\R^{n,1}$ and
$\R^{n+1}$ respectively, i.e., global symmetries have
the form
\begin{equation}
T \bydef \sum_{i\in \Lambda}T_i,\quad   T_i = \sum_{\alpha,\beta} R_{\alpha\beta} u^\alpha_i \ddp{}{u^\beta_i}. 
\end{equation}
For the
$\HH^n$ model these form a representation of the Lorentzian Lie
algebra $\mathfrak{so}(n,1)$, and for the
$\bbS^{n}_+$ model these form a representation of the orthogonal Lie
algebra
$\mathfrak{so}(n+1)$. In coordinates, these
symmetries can be written as
\begin{equation}
T \bydef \sum_{i\in \Lambda}T_i,\quad T_i = \sum_{\alpha,\beta = 1}^n R_{\alpha\beta} u^\alpha_i \ddp{}{u^\beta_i} + \sum_{\gamma=1}^nS_{\gamma}z_i\ddp{}{u^\gamma_i}
\end{equation}
where $S_{\gamma} = R_{0\gamma}$ and $z = \sqrt{1 + (u^1)^2 + \dots +
  (u^n)^2}$ for $\HH^n$, while
$S_{\gamma} = R_{(n+1)\gamma}$ and  $z = \sqrt{1 - (u^1)^2 - \dots  - (u^n)^2}$ for $\bbS^n_+$ .

\subsection{Symmetries of supersymmetric spaces}
\label{sec:susy-symmetries}
Infinitesimal symmetries of Berezin integrals and the global
symmetries of supersymmetric spaces have descriptions similar to those
of the previous section.  The primary difference is that all
objects are graded.

\subsubsection{Superderivations and supersymmetries}
Let $A$ be a $\Z^2$-graded algebra (or superalgebra) such as
$A = \Omega^{n}(\R^{m})$.  Thus $A=A_0\oplus A_1$ where elements in $A_0$
are even and elements in $A_1$ are odd.  Using this decomposition, a
linear map $T\colon A \rightarrow A$ can be written in blocks as
\begin{equation}
Tf = \begin{bmatrix}
T_{00}& T_{01}\\
T_{10}&T_{11}
\end{bmatrix}
\begin{bmatrix}
f_0\\f_1
\end{bmatrix}.
\end{equation}
A linear map is \emph{even} if $T_{01} = T_{10} = 0$, and 
\emph{odd} if $T_{00} = T_{11} = 0$. As for functions, a \emph{homogeneous} linear map
is one that is even or odd.
We extend the parity function to homogeneous maps by
\begin{equation}
\alpha(T) =  \begin{cases} 0 \in \Z_2, \qquad T \text{ is even}
  \\
1 \in \Z_2,\qquad T \text{ is odd}
\end{cases},
\end{equation}
and for homogeneous $f$ we have $\alpha(Tf) = \alpha(T)+\alpha(f)$. A
\emph{homogeneous superderivation} is then defined as a
homogeneous linear map $T\colon A
\rightarrow A$ that obeys the super-Leibniz rule
\begin{equation}\label{eq:super-leib}
  T(fg) =  (Tf) g + (-1)^{\alpha(T)\alpha(f)}f(Tg).
\end{equation}

Thus even and odd superderivations are derivations and
antiderivations, respectively. 
A general superderivation is a sum of an even
and an odd superderivation. The collection of superderivations on $A$
forms a Lie superalgebra $\mathrm{SDer}(A)$ with the supercommutator
defined on homogeneous superderivations by
\begin{equation}
[T_1, T_2] = T_1\circ T_2 - (-1)^{\alpha(T_1)\alpha(T_2)}T_2\circ T_1,
\end{equation}
and extended to all superderivations by linearity. If $A= \Omega^{n}(M)$
is a superalgebra of forms on an $m$-dimensional manifold $M$, then
every superderivation $T \in \mathrm{SDer}(A)$ can be realised in
coordinates $(x^1,\dots, x^{m},\xi^1,\dots,\xi^{n})$ as
\begin{equation}
T = \sum_{\alpha = 1}^{m} F_\alpha\ddp{}{x^{\alpha}} + \sum_{\alpha =
  1}^{n} G_\alpha\ddp{}{\xi^{\alpha}} 
\end{equation}
where $F_\alpha,G_\alpha\in A$. If $T$ is an even/odd superderivation
then $F_\alpha$ are even/odd forms and $G_\alpha$ are odd/even forms.

\paragraph{Berezin integral symmetries and global symmetries}
We define a \emph{Berezin integral} $\int_{M}$ on a superalgebra
$\Omega^{n}(M)$ to be a linear map defined by integrating forms $F$
against an even Berezin integral form  $d\V{x}\,\partial_{\V{\xi}} \,
\rho(\V{x},\V{\xi})$, i.e., 
\begin{equation}
\int_M F \bydef \int_{\R^{m|n}} d\V{x}\,\partial_{\V{\xi}}\, \rho(\V{x},\V{\xi})F(\V{x},\V{\xi}). 
\end{equation}
\begin{definition}
  Let $\int_M$ be a Berezin integral 
  on a superalgebra $\Omega^{n}(M)$.  A superderivation
  $T\in \mathrm{SDer}(\Omega^{n}(M))$ is an \emph{infinitesimal
    symmetry} of $\int_{M}$ if for all
  $F \in \Omega^{n}(M)$ with rapid decay
  \begin{equation}
    \label{e:T-SUSY-symmetry}
    \int_M T F = 0.
  \end{equation}
\end{definition}

This leads to Ward identities in the same manner as the
non-supersymmetric case, the only difference coming from the
super-Leibniz rule: for homogeneous superderivations
$T\in \mathrm{SDer}{(\Omega^{n}(M))}$ and forms $F,G\in
\Omega^{n}(M)$
we have
\begin{equation}
\int_M TF = (-1)^{\alpha(T)\alpha(F)+1}\int_M TG.
\end{equation}

Global symmetries of supersymmetric spin systems are
infinitesimal symmetries of the form
\begin{equation}
T \bydef \sum_{i\in\Lambda} T_i,
\end{equation}
i.e., they are diagonal infinitesimal symmetries.
For the spin systems considered in this paper, which are defined in terms of
quadratic Hamiltonians
$\frac{1}{4}\sum_{i,j\in\Lambda}\beta_{ij}(u_{i}-u_{j})^{2}$, global symmetries
are those that
annihilate the appropriate super-Euclidean or super-Minkowski inner product
\begin{equation}
T(u_i-u_j)^2 = 0
\end{equation}
for all $i,j\in \Lambda$.
Here we have written
  $(u_{i}-u_{j})^{2}$ for the form $(u_{i}-u_{j})\cdot
  (u_{i}-u_{j})$. The following subsections briefly discuss this condition for
the $\R^{2|2}, \HH^{2|2}$, and $\bbS^{2|2}_{+}$ models.

\subsubsection{$\R^{2|2}$ model}
\label{sec:r22iso}
The inner product associated to the SUSY GFF is
\begin{equation}
u_i \cdot u_j = x_ix_j + y_iy_j - \xi_i\eta_j + \eta_i\xi_j,
\end{equation}
giving the global symmetries as diagonal superderivations $T\in
\mathrm{SDer}(\Omega^{2\Lambda}(\R^{2\Lambda}))$ satisfying
\begin{equation}
T(u_i - u_j)^2 = T\left((x_i-x_j)^2 + (y_i-y_j)^2 - 2(\xi_i-\xi_j)(\eta_i-\eta_j) \right) = 0
\end{equation}
for all $i,j\in \Lambda$.

Concretely, letting $u_i = (u^1_i,\dots,u^4_i) = (x_i,y_i,\xi_i,\eta_i)$, these are derivations of the form
\begin{equation}
T \bydef \sum_{i\in \Lambda}T_i,\quad T_i = \sum_{\alpha,\beta = 1}^4 R_{\alpha\beta} u^\alpha_i \ddp{}{u^\beta_i} + \sum_{\gamma = 1}^{4}S_{\gamma}\ddp{}{u^\gamma_i} 
\end{equation}
where $R$ is a real $4\times 4$ matrix (independent of $i\in\Lambda$) such that
\begin{equation}
R^{ST}J + JR = 0,
\end{equation}
where $R^{ST}$, the \emph{supertranspose} of $R$, and $J$ are given by
\begin{equation}\label{eq:infsymR22}
R^{ST} \bydef \begin{bmatrix}
A&B\\C&D
\end{bmatrix}^{ST} =  \begin{bmatrix}
A^T&C^T\\-B^T&D^T 
\end{bmatrix},
\qquad 
J \bydef \left[\begin{array}{cc|cc}
1 & 0 & 0 & 0 \\
0 & 1 & 0 & 0 \\\hline
0 & 0 & 0 & -1\\
0 & 0 & 1 & 0
\end{array}\right],
\end{equation}
and $S$ is a real vector. With the supercommutator of
superderivations,  these form a
representation of the super-Euclidean Lie superalgebra
$\mathfrak{osp}(2|2)\ltimes \R^{2|2}$ . In particular, the supersymmetry generator
\begin{equation}
Q \bydef \sum_{i\in\Lambda} Q_i = \sum_{i\in \Lambda} \xi_i \ddp{}{x_i} + \eta_i \ddp{}{y_i} - x_i\ddp{}{\eta_i} + y_i \ddp{}{\xi_i}
\end{equation}
and the infinitesimal global translation
\begin{equation}
T \bydef \sum_{i\in\Lambda} T_i =  \sum_{i\in \Lambda} \ddp{}{x_i}
\end{equation}
are global symmetries. 

A short computation shows that the individual $T_i$ and $Q_i$ are
symmetries of the flat Berezin--Lebesgue measure
$d\V{x}\,d\V{y}\,\partial_{\V{\xi}}\,\partial_{\V{\eta}}$. For
instance, if $F$ is a compactly supported form with top degree
component $F_{2\Lambda}(\V{x},\V{y})\V{\xi}\V{\eta}$,
\begin{equation}
\int_{(\R^{2|2})^\Lambda} (T_i F) = \int_{\R^{2\Lambda}}d\V{x}\,d\V{y}\,\partial_{\V{\xi}}\,\partial_{\V{\eta}} (T_i F) = \int_{\R^{2\Lambda}}d\V{x}\,d\V{y} \ddp{}{x_i}F_{2\Lambda}(\V{x},\V{y}) = 0
\end{equation}
where in the last step we have used the translation invariance of the usual Lebesgue measure. A particular case of this is formula \eqref{eq:IBP-SUSY-GFF}.

\subsubsection{Super-Minkowski space $\R^{3|2}$}
\label{sec:h22iso}
The inner product associated to the super-Minkowski model is the super-Minkowski inner product
\begin{equation}
u_i \cdot u_j = -z_iz_j + x_ix_j + y_iy_j - \xi_i\eta_j + \eta_i\xi_j,
\end{equation}
giving the global symmetries as diagonal superderivations
$T\in \mathrm{SDer}(\Omega^{2\Lambda}(\R^{3\Lambda}))$ satisfying
\begin{equation}
T(u_i - u_j)^2 = T\left(-(z_i-z_j)^{2} + (x_i-x_j)^2 + (y_i-y_j)^2 - 2(\xi_i-\xi_j)(\eta_i-\eta_j) \right) = 0
\end{equation}
for all $i,j\in \Lambda$.
Concretely, letting $u_i = (u_i^0, u_i^1,u_i^2,u_i^3,u_i^4) = (z_i,x_i,y_i,\xi_i,\eta_i)$, these are derivations of the form
\begin{equation}
T \bydef \sum_{i\in \Lambda}T_i,\quad T_i = \sum_{\alpha,\beta = 0}^4 R_{\alpha\beta} u^\alpha_i \ddp{}{u^\beta_i} + \sum_{\gamma = 1}^{5}S_{\gamma}\ddp{}{u^\gamma_i} 
\end{equation}
where $R$ is a real $5\times 5$ matrix such that
\begin{equation}
R^{ST}J + JR = 0
\end{equation}
with $J$ now the $5\times 5$ matrix
\begin{equation}
J = \left[\begin{array}{ccc|cc}
-1 & 0 & 0 & 0 &0\\
0 & 1 & 0 & 0 &0\\
0 &0 & 1 & 0 & 0 \\\hline
0 &0 & 0 & 0 & -1\\
0 &0 & 0 & 1 & 0
\end{array}\right],
\end{equation}
and  $S$
a real vector.
These global symmetries form a representation of the super-Poincare Lie
superalgebra $\mathfrak{osp}(2,1|2)\ltimes \R^{3|2}$ with the supercommutator of superderivations. In particular, the supersymmetry generator
\begin{equation}
Q \bydef \sum_{i\in\Lambda} Q_i = \sum_{i\in \Lambda} \pa{ \xi_i \ddp{}{x_i} + \eta_i \ddp{}{y_i} - x_i\ddp{}{\eta_i} + y_i \ddp{}{\xi_i} }
\end{equation}
and the global Lorentz boost
\begin{equation}
T \bydef \sum_{i\in\Lambda} T_i =  \sum_{i\in \Lambda} \pa{ z_i\ddp{}{x_i} + x_i \ddp{}{z_i} }.
\end{equation}
are global symmetries of the super-Minkowski spin model. As for the
$\R^{2|2}$ model, the individual $T_i$ and $Q_i$ are symmetries of the
Berezin--Lebesgue measure
$d\V{x}\,d\V{y}\,d\V{z}\,\partial_{\V{\xi}}\,\partial_{\V{\eta}}$.

\subsubsection{$\bbS^{2|2}_{+}$ and $\HH^{2|2}$ models}
\label{sec:sh22iso}
As for their standard counterparts, the global symmetries of the
$\bbS^{2|2}_{+}$ and $\HH^{2|2}$ models are induced from the ambient
super-Euclidean and super-Minkowski spaces. In both cases, the global
symmetries in ambient coordinates are
\begin{equation}
  T \bydef \sum_{i\in \Lambda}T_i,\quad T_i = \sum_{\alpha,\beta =
    0}^4 R_{\alpha\beta} u^\alpha_i \ddp{}{u^\beta_i}, 
\end{equation}
which form a representation of $\mathfrak{osp}(2,1|2)$ for the
$\HH^{2|2}$ model, and a representation of $\mathfrak{osp}(3|2)$ for
$\bbS^{2|2}_+$. In coordinates, the $T_i$ are written
\begin{equation}
  T_i = \sum_{\alpha,\beta = 1}^4 R_{\alpha\beta} u^\alpha_i
  \ddp{}{u^\beta_i} + \sum_{\gamma=1}^4S_{\gamma}z_i\ddp{}{u^\gamma_i} 
\end{equation}
with $z_i = \sqrt{1+x_i^2+y_i^2 -2\xi\eta}$ for $\HH^{2|2}$ and
$z_i = \sqrt{1-x_i^2-y_i^2 +2\xi\eta}$ for $\bbS^{2|2}_+$ and
$S_\gamma = R_{3\gamma}$ in both cases. As before, the supersymmetry
generator
\begin{equation}
  Q \bydef \sum_{i\in\Lambda} Q_i = \sum_{i\in \Lambda} \xi_i
  \ddp{}{x_i} + \eta_i \ddp{}{y_i} - x_i\ddp{}{\eta_i} + y_i
  \ddp{}{\xi_i} 
\end{equation}
is a global symmetry of both the $\HH^{2|2}$ and $\bbS^{2|2}_+$
models, as is the global Lorentz boost/rotation
\begin{equation}
  T \bydef \sum_{i\in\Lambda} T_i =  \sum_{i\in \Lambda} z_i\ddp{}{x_i}.
\end{equation}
A short computation also shows that the individual $T_i$ and $Q_i$ are
symmetries of the Berezin--Haar measure
$d\V{x}\,d\V{y}\,\partial_{\V{\xi}}\,\partial_{\V{\eta}}
\frac{1}{\prod_{i\in \Lambda} z_i}$.

\subsection{SUSY delta functions}
\label{app:delta}

We begin by defining Dirac delta functions to integrate against forms
$F$ in $\Omega^{2}(\R^{2})$. We will assume $F$ is given by a smooth
function of an even form.  Let $u_{0}=(0,0,0,0)\in \R^{2|2}$, and let
$G\in \Omega^{2}(\R^{2})$ be a smooth compactly supported form with
$\int_{\R^{2|2}}G = 1$. For $\epsilon>0$ define smooth forms
\begin{equation}
  \label{eq:delta}
  \delta^{(\epsilon)}_{u_{0}}(u) \bydef G(\frac{1}{\epsilon}u), \qquad
  \frac{1}{\epsilon}u = (\frac{x}{\epsilon}, \frac{y}{\epsilon},
  \frac{\xi}{\epsilon}, \frac{\eta}{\epsilon}). 
\end{equation}
We then define
\begin{equation}
  \label{eq:delta-1}
  \int_{\R^{2|2}} F(u) \delta_{u_{0}} \bydef \lim_{\epsilon\to 0}
  \int_{\R^{2|2}} F(u) \delta^{(\epsilon)}_{u_{0}}(u).
\end{equation}
The change of generators that rescales each generator
by $\epsilon^{-1}$ has unit Berezinian, and hence
\begin{equation}
  \label{eq:delta-2}
  \int_{\R^{2|2}} F(u)\delta_{u_{0}} = \lim_{\epsilon\to
    0}\int_{\R^{2|2}} F(\epsilon u) \delta^{(1)}_{u_{0}}(u) =
  F_{0}(0)\int_{\R^{2|2}} \delta^{(1)}_{u_{0}}(u) = F_{0}(0), 
\end{equation}
where we recall $F_{0}$ is the degree zero part of $F$.
In the third equality we have used that the degree $p$ parts of $F$
for $p\geq 1$ carry factors of $\epsilon$, and hence vanish in the
limit. The last equality follows
since $\int_{\R^{2|2}} \delta^{(1)}_{u_{0}} = \int_{\R^{2|2}} G = 1$.

Suppose
$\theta_{s}\colon (x,y,\xi,\eta)\mapsto
(\theta_{s}x,\theta_{s}y,\theta_{s}\xi,\theta_{s}\eta)$ is invertible
with inverse $\theta_{-s}$, and that $\theta_{s}u_{0}$ only has
non-zero even components. In this setting we define
$\delta_{\theta_{s}u_{0}}(u)$ by $\delta_{u_{0}}(\theta_{-s}u)$. If
the transformation $\theta_{s}$ has unit Berezinian, then we obtain
\begin{equation}
  \label{eq:delta-3}
  \int_{\R^{2|2}} F(u) \delta_{\theta_s u_0}(u) = \int_{\R^{2|2}}
  F(u) \delta_{u_0}(\theta_{-s}u) = \int_{\R^{2|2}} F(\theta_s u) \delta_{u_0}(u)
  = F_{0}(\theta_s u_0).
\end{equation}

It is often convenient to choose $G$ as a supersymmetric form. For
$\R^{2|2}$, this can be achieved by choosing any smooth compactly
supported function $g\colon \R \rightarrow \R$ with $g(0) = 1$, and
setting $G = g(|u|^2)$.

The definition of delta functions on $\Omega^{2N}(\R^{2N})$ is
analogous, but now based on a smooth compact form $G\in \Omega^{2N}(\R^{2N})$. 

For $\HH^{2|2}$ and $\bbS^{2|2}_{+}$, we define delta functions by
making using of the definition on $\R^{2|2}$. Namely, for $\HH^{2|2}$
in the coordinates $\tilde u = (x,y,\xi,\eta)$ with
$z(\tilde u) = \sqrt{1+ x^2+y^2-2\xi\eta}$, we set
\begin{equation}
  \delta^{(\epsilon, \HH^{2|2})}_{u_0}(u) = z(\tilde u)\delta^{(\epsilon)}_{\tilde u_0}(\tilde u)
\end{equation}
where $u_0=(1,0,0,0,0)
\in \HH^{2|2}$, $\delta^{(\epsilon)}_{\tilde u_0}(\tilde u)$
is a delta function for $\R^{2|2}$ as constructed above,
and $\tilde u_0 = (0,0,0,0) \in \R^{2|2}$. 
Then
\begin{equation}
  \lim_{\epsilon\to 0} \int_{\HH^{2|2}} F \delta^{(\epsilon, \HH^{2|2})}_{u_0}
  =
  \lim_{\epsilon\to 0} \int_{\R^{2|2}} F(z(\tilde u),x,y,\xi,\eta)
  \delta^{(\epsilon)}_{\tilde{u}_0}(\tilde u) =   F_{0}(1,0,0), 
\end{equation}
i.e., the zero-degree part of $F$ evaluated at the point $(z,x,y) = (1,0,0) \in \HH^{2}$.
The construction for $\bbS^{2|2}_+$ is analogous.

\section*{Acknowledgements}
\label{sec:acknowledgements}

We thank Christophe Sabot for pointing out an error in an earlier
version of this article. RB and TH would like to thank the Isaac
Newton Institute for Mathematical Sciences for support and hospitality
during the programme ``Scaling limits, rough paths, quantum field
theory'' when work on this paper was undertaken; this work was
supported by EPSRC grant no.\ EP/R014604/1. TH is supported by EPSRC
grant no.\ EP/P003656/1.

\bibliography{all}
\bibliographystyle{plain}

\end{document}